\newtheorem{theorem}{Theorem}
\numberwithin{theorem}{section}
\newtheorem{lemma}[theorem]{Lemma}
\newtheorem{proposition}[theorem]{Proposition}
\newtheorem{coro}[theorem]{Corollary} 
\numberwithin{equation}{section}
\newcommand{\minimize}{\mbox{minimize}}
\newcommand{\st}{\mbox{subject to}}
\newcommand{\tf}[1]{\mathbf{#1}}
\newcommand{\Ah}{\widehat{A}}
\newcommand{\Ahat}{\Ah}
\newcommand{\Bhat}{\Bh}
\newcommand{\Bh}{\widehat{B}}
\newcommand{\Kh}{\widehat{K}}
\newcommand{\Jh}{\widehat{J}}
\newcommand{\Dh}{\hat{\tf{\Delta}}}
\newcommand{\trueA}{A}
\newcommand{\trueB}{B}
\newcommand{\trueK}{K_\star}
\DeclareMathOperator*{\Tr}{\mathbf{Tr}}
\newcommand{\R}{\ensuremath{\mathbb{R}}}
\newcommand{\norm}[1]{\lVert #1 \rVert}
\newcommand{\bignorm}[1]{\left\lVert #1 \right\rVert}
\newcommand{\twonorm}[1]{\lVert #1 \rVert_{2}}
\newcommand{\bigtwonorm}[1]{\left\lVert #1 \right\rVert_{2}}
\newcommand{\spectralnorm}[1]{\twonorm{#1}}
\newcommand{\bigspectralnorm}[1]{\bigtwonorm{#1}}
\newcommand{\E}{\mathbb{E}}
\renewcommand{\Pr}{\mathbb{P}}
\newcommand{\T}{*}
\newcommand{\Ncal}{\mathcal{N}}
\newcommand{\vecx}{{x}}
\newcommand{\vecw}{{w}}
\newcommand{\vecu}{{u}}
\newcommand{\PP}{\mathbb{P}}
\newcommand{\RR}{\mathbb{R}}
\newcommand{\Res}[1]{\mathfrak{R}_{#1}}
\newcommand{\statedim}{n}
\newcommand{\inputdim}{p}
\newcommand{\hinf}{\mathcal{H}_\infty}
\newcommand{\htwo}{\mathcal{H}_2}
\newcommand{\ltwonorm}[1]{\| #1 \|_2}
\newcommand{\hinfnorm}[1]{\| #1 \|_{\hinf}}
\newcommand{\bighinfnorm}[1]{\left \| #1 \right\|_{\hinf}}
\newcommand{\bightwonorm}[1]{\left \| #1 \right\|_{\htwo}}
\newcommand{\iid}{\stackrel{\mathclap{\text{\scriptsize{ \tiny i.i.d.}}}}{\sim}}
\def \basefigwidth{0.49}
\title{On the Sample Complexity of the Linear Quadratic Regulator}
\author{Sarah Dean$^\sharp$, Horia Mania$^\sharp$,  Nikolai Matni$^\dagger$,
Benjamin Recht$^\sharp$, and Stephen Tu$^\sharp$
\vspace{0.0625in}
\\
$^\sharp$ University of California, Berkeley \\
$\dagger$ California Institute of Technology}
\date{October 3, 2017, Revised: \today}
\begin{document}
\maketitle
\begin{abstract}
This paper addresses the optimal control problem known as the Linear Quadratic Regulator in the case when the dynamics are unknown.  We propose a multi-stage procedure, called \emph{Coarse-ID control}, that estimates a model from a few experimental trials, estimates the error in that model with respect to the truth, and then designs a controller using both the model and uncertainty estimate.  Our technique uses contemporary tools from random matrix theory to bound the error in the estimation procedure.  We also employ a recently developed approach to control synthesis called \emph{System Level Synthesis} that enables robust control design by solving a quasiconvex optimization problem.  We provide end-to-end bounds on the relative error in control cost that are optimal in the number of parameters and that highlight salient properties of the system to be controlled such as closed-loop sensitivity and optimal control magnitude.  We show experimentally that the Coarse-ID approach enables efficient computation of a stabilizing controller in regimes where simple control schemes that do not take the model uncertainty into account fail to stabilize the true system.
\end{abstract}

\section{Introduction}
\label{sec:intro}
Having surpassed human performance in video games~\cite{atari-nature} and Go~\cite{alphago}, there has been a renewed interest in applying machine learning techniques to planning and control.  In particular, there has been a considerable amount of effort in developing new techniques for \emph{continuous control} where an autonomous system interacts with a physical environment~\cite{duan2016benchmarking,Levine16}.  A tremendous opportunity lies in deploying these data-driven systems in more demanding interactive tasks including self-driving vehicles, distributed sensor networks, and agile robotics.  As the role of machine learning expands to more ambitious tasks, however, it is critical these new technologies be safe and reliable. Failure of such systems could have severe social and economic consequences including the potential loss of human life. How can we guarantee that our new data-driven automated systems are robust?

Unfortunately, there are no clean baselines delineating the possible control performance achievable given a fixed amount of data collected from a system. Such baselines would enable comparisons of different techniques and would allow engineers to trade off between data collection and action in scenarios with high uncertainty.  Typically, a key difficulty in establishing baselines is in proving \emph{lower bounds} that state the minimum amount of knowledge needed to achieve a particular performance, regardless of method. However, in the context of controls, even \emph{upper bounds} describing the worst-case performance of competing methods are exceptionally rare.  Without such estimates, we are left to compare algorithms on a case-by-case basis, and we may have trouble diagnosing whether poor performance is due to algorithm choice or some other error such as a software bug or a mechanical flaw.

In this paper, we attempt to build a foundation for a theoretical understanding of how machine learning interfaces with control by analyzing one of the most well-studied problems in classical optimal control, the \emph{Linear Quadratic Regulator} (LQR).  Here we assume that the system to be controlled obeys \emph{linear} dynamics, and we wish to minimize some \emph{quadratic} function of the system state and control action.  This problem has been studied for decades in control: it has a simple, closed form solution on the infinite time horizon and an efficient, dynamic programming solution on finite time horizons.  When the dynamics are unknown, however, there are far fewer results about achievable performance.

Our contribution is to analyze the LQR problem when the dynamics of the system are unknown, and we can measure the system's response to varied inputs.  A na{\"{i}}ve solution to this problem would be to collect some data of how the system behaves over time, fit a model to this data, and then solve the original LQR problem assuming this model is accurate. Unfortunately, while this procedure might perform well given sufficient data, it is difficult to determine how many experiments are necessary in practice. Furthermore, it is easy to construct examples where the procedure fails to find a stabilizing controller.

As an alternative, we propose a method that couples our uncertainty in estimation with the control design.  Our main approach uses the following framework of \emph{Coarse-ID control} to solve the problem of LQR with unknown dynamics:
\begin{enumerate}
\item Use supervised learning to learn a coarse model of the dynamical system to be controlled.  We refer to the system estimate as the \emph{nominal system}.
\item Using either prior knowledge or statistical tools like the bootstrap, build probabilistic guarantees about the distance between the nominal system and the true, unknown dynamics.
\item Solve a robust optimization problem over controllers that optimizes performance of the nominal system while penalizing signals with respect to the estimated uncertainty, ensuring stable and robust execution.
\end{enumerate}
We will show that for a sufficient number of observations of the system, this
approach is guaranteed to return a control policy with small relative cost.
In particular, it guarantees asymptotic stability of the closed-loop system. In
the case of LQR, step 1 of coarse-ID control simply requires solving a
linear least squares problem, step 2 uses a finite sample theoretical guarantee or a standard bootstrap technique, and step 3
requires solving a small semidefinite program. Analyzing this approach, on the
other hand, requires contemporary techniques in non-asymptotic
statistics and a novel parameterization of control problems
that renders nonconvex problems convex~\cite{virtual,SysLevelSyn1}.

We demonstrate the utility of our method on a simple simulation.  In the presented example, we show that simply using the nominal system to design a control policy frequently results in unstable closed-loop behavior, even when there is an abundance of data from the true system.  However, the Coarse-ID approach finds a stabilizing controller with few system observations.

\subsection{Problem Statement and Our Contributions}

The standard optimal control problem aims to find a control sequence that minimizes an expected cost.  We assume a dynamical system with \emph{state} $x_t\in\R^\statedim$ can be acted on by a \emph{control} $u_t \in \R^\inputdim$ and obeys the stochastic dynamics
\begin{align}
	x_{t+1} = f_t(x_t,u_t,w_t)
\end{align}
where $w_t$ is a random process with $w_t$ independent of $w_{t'}$ for all $t\neq t'$.  Optimal control then seeks to minimize
\begin{align}
\label{eq:general_control}
	\begin{array}{ll}
	\minimize & \E\left[ \frac{1}{T} \sum_{t=1}^T c_t(x_t,u_t)\right]\\
	\st & x_{t+1} = f_t(x_t,u_t,w_t)
	\end{array}\,.
\end{align}
Here, $c_t$ denotes the state-control cost at every time step, { and the input $u_t$ is allowed to depend on the current state $x_t$ and all previous states and actions. In this generality, problem~\eqref{eq:general_control} encapsulates many of the problems considered in the reinforcement learning literature.}

The simplest optimal control problem with continuous state is the Linear Quadratic Regulator (LQR), in which costs are a fixed quadratic function of state and control and the dynamics are linear and time-invariant:
\begin{align}\label{eq:lqr-classic}
	\begin{array}{ll}
	\minimize & \E\left[ \frac{1}{T} \sum_{t=1}^T x_t^* Q x_t + u_{t-1}^* R u_{t-1} \right]\\
	\st & x_{t+1} = A x_t + B u_t + w_t
	\end{array}\,.
\end{align}
Here $Q$ (resp. $R$) is a $\statedim \times \statedim$ (resp. $\inputdim\times \inputdim$) positive definite matrix, $A$ and $B$ are called the \emph{state transition matrices}, and $w_t\in \R^\statedim$ is Gaussian noise with zero-mean and covariance $\Sigma_w$.  Throughout, $M^\T$ denotes the Hermitian transpose of the matrix $M$.

In what follows, we will be concerned with the \emph{infinite time horizon}
variant of the LQR problem where we let the time horizon $T$ go to infinity and
minimize the average cost.  When the dynamics are known, this problem has a
celebrated closed form solution based on the solution of matrix Riccati
equations~\cite{ZDGBook}.  Indeed, the optimal solution sets $u_t = K x_t$ for
a fixed $\inputdim \times \statedim$ matrix $K$, and the corresponding
optimal cost will serve as our gold-standard baseline to which we will compare
the achieved cost of all algorithms.

In the case when the state transition matrices are unknown, fewer results have been established
about what cost is achievable.  We will assume that we can conduct experiments
of the following form: given some initial state $x_0$, we can evolve the
dynamics for $T$ time steps using any control sequence $\{u_0,\ldots,
u_{T-1}\}$, measuring the resulting output $\{x_1,\ldots, x_T\}$.  {If we run
$N$ such independent experiments}, what infinite time horizon control cost is
achievable using only the data collected? For simplicity of bookkeeping, in our analysis we further assume that we can
prepare the system in initial state $x_0 = 0$.

In what follows we will examine the performance of the Coarse-ID control framework in this scenario.  We will estimate the errors accrued by least squares estimates $(\Ah,\Bh)$ of the system dynamics.  This estimation error is not easily handled by standard techniques because the design matrix is highly correlated with the model to be estimated.  Regardless, for theoretical tractability,
 we can build a least squares estimate using only the final sample $(x_{T},x_{T-1},u_{T-1})$ of each of the $N$ experiments. Indeed, in Section~\ref{sec:estimation} we prove the following

\begin{proposition}
\label{prop:independent_estimation}
Define the matrices
\begin{align}
G_T = \begin{bmatrix}
\trueA^{T - 1} \trueB & \trueA^{T - 2}\trueB & \ldots & \trueB
\end{bmatrix} \quad \text{and} \quad
F_T = \begin{bmatrix}
\trueA^{T - 1}  & \trueA^{T - 2} & \ldots & I_\statedim
\end{bmatrix} \:. \label{eq:gramians}
\end{align}
Assume we collect data from the linear, time-invariant system initialized at
$x_0 = 0$, using inputs $u_t~\iid \mathcal{N}(0,\sigma_u^2 I_\inputdim)$ for $t=1,...,T$. Suppose
that the process noise is $w_t~\iid \mathcal{N}(0,\sigma_w^2 I_\statedim)$ and that
\begin{align*}
  N \geq 8(\statedim + \inputdim) + 16\log(4/\delta) \:.
\end{align*}
Then, with probability at least $1-\delta$,
the least squares estimator using only the final sample of each trajectory satisfies both
the inequality
\begin{align}\label{eq:A_error_bound}
  \ltwonorm{\widehat{A} - A} \leq \frac{16 \sigma_w}{\sqrt{\lambda_{\min}(\sigma_u^2 G_TG_T^\T + \sigma_w^2 F_TF_T^\T)}}\sqrt{\frac{(\statedim + 2\inputdim)\log(36/\delta)}{N}} \:,
\end{align}
 and the inequality
\begin{align}\label{eq:B_error_bound}
  \ltwonorm{\widehat{B} - B} \leq \frac{16\sigma_w}{\sigma_u}\sqrt{\frac{(\statedim + 2\inputdim)\log(36/\delta)}{N}}\:.
\end{align}
\end{proposition}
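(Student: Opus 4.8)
The plan is to turn the estimation task into an ordinary least-squares problem with \emph{independent} samples, for which the regression noise is independent of the design. Retaining only the last triple $(x_T,x_{T-1},u_{T-1})$ of each trajectory, the dynamics give $x_T = A x_{T-1}+B u_{T-1}+w_{T-1}$, so with $z=(x_{T-1}^\T,u_{T-1}^\T)^\T\in\R^{\statedim+\inputdim}$ and $\Theta=[A\ \ B]$ the $N$ retained samples obey the linear model $x_T=\Theta z+w_{T-1}$, i.i.d. across trajectories. Unrolling the recursion from $x_0=0$ writes $x_{T-1}$ as a fixed linear combination of the independent Gaussians $\{u_t\}$ and $\{w_t\}$, from which one reads off $\mathrm{Cov}(x_{T-1})=\sigma_u^2 G_TG_T^\T+\sigma_w^2 F_TF_T^\T=:\Sigma$ and that $x_{T-1}$ is independent of $u_{T-1}\sim\Normal(0,\sigma_u^2 I_\inputdim)$, since $u_{T-1}$ does not enter $x_{T-1}$. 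The decisive point is that $w_{T-1}$ is fresh and independent of $z$; this is exactly what the ``last sample'' device buys, and it is what decouples the design from the noise.

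Stacking the samples into $Y\in\R^{N\times\statedim}$, $Z=[X\ \ U]\in\R^{N\times(\statedim+\inputdim)}$, and $W\in\R^{N\times\statedim}$ (rows i.i.d. $\Normal(0,\sigma_w^2 I_\statedim)$, independent of $Z$) gives $Y=Z\Theta^\T+W$ and the least-squares error
\[ \hat\Theta^\T-\Theta^\T=(Z^\T Z)^{-1}Z^\T W. \]
I would bound it by splitting off the noise,
\[ \|\hat\Theta-\Theta\|\le\|(Z^\T Z)^{-1/2}\|\cdot\|(Z^\T Z)^{-1/2}Z^\T W\|, \]
and noting that, conditioned on $Z$, the matrix $(Z^\T Z)^{-1/2}Z^\T$ has orthonormal rows, so $(Z^\T Z)^{-1/2}Z^\T W$ is a Gaussian matrix with i.i.d. $\Normal(0,\sigma_w^2)$ entries of size $(\statedim+\inputdim)\times\statedim$. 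Standard tail bounds for the largest singular value of a Gaussian matrix then control this factor by a constant multiple of $\sigma_w(\sqrt{\statedim+\inputdim}+\sqrt{\statedim}+\sqrt{\log(1/\delta)})$, which is the source of the $\sqrt{(\statedim+2\inputdim)\log(36/\delta)}$ numerator in both inequalities (the absolute constant $16$ is obtained by tracking these estimates carefully).

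The two different denominators come from the block structure $Z=[X\ \ U]$. Applying the Schur-complement form of $(Z^\T Z)^{-1}$ (equivalently, residualizing one block against the other as in the Frisch-Waugh-Lovell identity) gives $\hat A^\T-A^\T=(X^\T M_U X)^{-1}X^\T M_U W$ and $\hat B^\T-B^\T=(U^\T M_X U)^{-1}U^\T M_X W$, where $M_U$ and $M_X$ are the orthogonal projectors onto the complements of the column spans of $U$ and $X$. Because $X$ has rows $\Normal(0,\Sigma)$ and $U$ has rows $\Normal(0,\sigma_u^2 I_\inputdim)$, and the two are independent, projecting out the other (at most $\statedim$- or $\inputdim$-dimensional) block still leaves on the order of $N$ effective samples; a lower bound on the smallest eigenvalue of the resulting anisotropic Wishart matrix gives $\lambda_{\min}(X^\T M_U X)\ge\tfrac12 N\,\lambda_{\min}(\Sigma)$ and $\lambda_{\min}(U^\T M_X U)\ge\tfrac12 N\sigma_u^2$ with high probability. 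These produce the denominators $\sqrt{\lambda_{\min}(\sigma_u^2 G_TG_T^\T+\sigma_w^2 F_TF_T^\T)}$ and $\sigma_u$ respectively. The hypothesis $N\ge 8(\statedim+\inputdim)+16\log(4/\delta)$ is precisely the threshold ensuring these minimum-eigenvalue estimates hold with the stated constant and with the failure probability folded into $\delta$.

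I expect the main obstacle to be this last step: a non-asymptotic \emph{lower} bound on the smallest eigenvalue of the (residualized) empirical covariance. Decoupling the noise and bounding the Gaussian operator norm are routine, but controlling the lower tail of $\lambda_{\min}$ of a sum of $N$ independent, possibly poorly conditioned, Gaussian outer products---tightly enough to justify the explicit sample threshold above---is the delicate part, and is where the non-asymptotic random-matrix tools (concentration of the extreme singular values of Gaussian matrices, together with the independence of $X$ and $U$) carry the argument.
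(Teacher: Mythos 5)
Your proposal is correct, and it reaches the stated bounds by a genuinely different technical route than the paper, even though the crucial reduction is identical: both arguments keep only the last triple of each rollout so that the design rows are i.i.d.\ Gaussian with block covariance $\Sigma_z = \diag(\sigma_u^2 G_TG_T^\T + \sigma_w^2 F_TF_T^\T,\; \sigma_u^2 I_\inputdim)$ and, decisively, the regression noise is independent of the design. Where you diverge is in the random-matrix work. The paper whitens the design ($Z = X\Sigma_z^{1/2}$ in distribution) and bounds $\spectralnorm{X^\T W}$ and $\lambda_{\min}(X^\T X)$ separately, using an $\varepsilon$-net lemma for the operator norm of a product of two independent Gaussian matrices (its Lemma~\ref{lemma:product_gaussians}) together with the Wishart lower bound (Lemma~\ref{lemma:wishart_lower_bound}); the two distinct denominators are then extracted by left-multiplying the error $E=(Z^\T Z)^{-1}Z^\T W$ by the selector matrices $Q_A = \begin{bmatrix} I_\statedim & 0\end{bmatrix}$, $Q_B = \begin{bmatrix} 0 & I_\inputdim \end{bmatrix}$ and exploiting that block-diagonality gives $\spectralnorm{Q_A\Sigma_z^{-1/2}} = \lambda_{\min}(\sigma_u^2 G_TG_T^\T+\sigma_w^2F_TF_T^\T)^{-1/2}$ and $\spectralnorm{Q_B\Sigma_z^{-1/2}} = 1/\sigma_u$. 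You instead (i) observe that, conditioned on $Z$, the matrix $(Z^\T Z)^{-1/2}Z^\T W$ has exactly i.i.d.\ $\Normal(0,\sigma_w^2)$ entries, so no product-of-Gaussians lemma is needed, only a single-matrix singular value bound; and (ii) obtain the two denominators by Frisch--Waugh--Lovell residualization rather than by whitening plus selectors. Both steps are sound; your route even yields slightly sharper numerators (for $\widehat{A}-A$ one gets an $\statedim\times\statedim$ Gaussian matrix, with an additive rather than multiplicative $\sqrt{\log(1/\delta)}$), so the constant $16$ and the $\log(36/\delta)$ factor are comfortably recovered. The one step you should make explicit is the projected-Wishart bound you flag as delicate: since $M_U$ depends only on $U$, which is independent of $X$, one can condition on $U$ and use rotational invariance of the Gaussian to write $M_U \widetilde{X} \overset{d}{=} \begin{bmatrix} \widehat{X}^\T & 0 \end{bmatrix}^\T$ with $\widehat{X}$ an $(N-\inputdim)\times\statedim$ standard Gaussian matrix, whence $\lambda_{\min}(X^\T M_U X) \geq \lambda_{\min}(\Sigma)\,(\sqrt{N-\inputdim}-\sqrt{\statedim}-\sqrt{2\log(1/\delta)})^2$, which the hypothesis on $N$ turns into the claimed $\Omega(N\lambda_{\min}(\Sigma))$ bound (and symmetrically for $U^\T M_X U$). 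With that observation spelled out, your argument is complete; the paper's version buys modularity (one lemma, Lemma~\ref{lemma:random_design_bound}, applied twice with different $Q$), while yours buys tighter constants at the cost of the residualization bookkeeping.
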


The  details of the estimation procedure are described in Section~\ref{sec:estimation} below.  Note that this estimation result seems to yield an optimal dependence in terms of the number of parameters:
$(A,B)$ together have $\statedim(\statedim + \inputdim)$ parameters to learn and each measurement consists of $\statedim$ values.  Moreover, this proposition further illustrates that not all linear systems are equally easy to estimate. The matrices $G_T G_T^*$ and $F_T F_T^*$ are finite time \emph{controllability Gramians} for the control and noise inputs, respectively .  These are standard objects in control: each eigenvalue/vector pair of such a Gramian characterizes how much input energy is required to move the system in that particular direction of the state-space.  Therefore $\lambda_{\min}\left(\sigma_u^2G_T G_T^* + \sigma_w^2F_T F_T^*\right)$ quantifies the least controllable, and hence most difficult to excite and estimate, mode of the system.  This property is captured nicely in our bound, which indicates that for systems for which all modes are easily excitable (i.e., all modes of the system amplify the applied inputs and disturbances), the identification task becomes easier.

While we cannot compute the operator norm error bounds~\eqref{eq:A_error_bound} and~\eqref{eq:B_error_bound} without knowing the true system matrices $(A,B)$, we present a data-dependent bound in Proposition~\ref{prop:data_dependent}.  Moreover, as we show in Section~\ref{sec:bootstrap}, a simple bootstrap procedure can efficiently upper bound the errors $\epsilon_A:=\ltwonorm{A-\Ah}$ and $\epsilon_B:=\ltwonorm{B-\Bh}$ from simulation.

With our estimates $(\Ah,\Bh)$ and error bounds $(\epsilon_A,\epsilon_B)$ in hand, we can turn to the problem of synthesizing a controller.
We can assert with high probability that $A = \Ah + \Delta_A$, and $B = \Bh + \Delta_B$, for $\ltwonorm{\Delta_A} \leq \epsilon_A$ and $\ltwonorm{\Delta_B} \leq \epsilon_B$, where the size of the error terms is determined by the number of samples $N$ collected.
In light of this, it is natural to pose the following robust variant of the standard LQR optimal control problem \eqref{eq:lqr-classic}, which computes a robustly stabilizing controller that seeks to minimize the worst-case performance of the system given the (high-probability) norm bounds on the perturbations $\Delta_A$ and $\Delta_B$:

\begin{equation}
\begin{array}{rl}
\minimize \:\: \displaystyle\sup\limits_{\substack{\|\Delta_A\|_2\leq \epsilon_A \\ \|\Delta_B\|_2\leq \epsilon_B}} &
	   \lim_{T\to \infty} \frac{1}{T}\sum_{t=1}^T	\E\left[ x_t^* Q x_t + u_{t-1}^* R u_{t-1} \right]\\
\st & x_{t+1} = (\hat{A} + \Delta_A) x_t + (\hat{B} + \Delta_B) u_t + w_t
\end{array} \,.
\label{eq:robust_lqr}
\end{equation}

Although classic methods exist for computing such controllers~\cite{feron1997analysis,paganini1995necessary,sznaier2002convex,wu1995optimal}, they typically require solving nonconvex optimization problems, and it is not readily obvious how to extract interpretable measures of controller performance as a function of the perturbation sizes $\epsilon_A$ and $\epsilon_B$.  To that end, we leverage the recently developed System Level Synthesis (SLS) framework \cite{SysLevelSyn1} to create an alternative robust synthesis procedure.  Described in detail in Section~\ref{sec:robust}, SLS lifts the system description into a higher dimensional space that enables efficient search for controllers. At the cost of some conservatism, we are able to guarantee robust stability of the resulting closed-loop system for all admissible perturbations and bound the performance gap between the resulting controller and the optimal LQR controller.  This is summarized in the following proposition.

\begin{proposition}\label{prop:sls-robust-stylized}
Let $(\Ah,\Bh)$ be estimated via the independent data collection scheme used in Proposition~\ref{prop:independent_estimation} and $\tf \Kh$ synthesized using robust SLS.  Let $\Jh$ denote the infinite time horizon LQR cost accrued by using the controller $\tf \Kh$ and $J_\star$ denote the optimal LQR cost achieved when $(A,B)$ are known.  Then the relative error in the LQR cost is bounded as
\begin{align}
\frac{\Jh - J_\star}{J_\star} \leq \mathcal{O}\left(\mathcal{C}_{\mathrm{LQR}}\sqrt{\frac{(\statedim + \inputdim )\log(1/\delta)}{N}} \right)
\end{align}
with probability $1-\delta$ provided $N$ is sufficiently large.
\end{proposition}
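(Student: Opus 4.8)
The plan is to chain three ingredients: the finite-sample estimation bounds of Proposition~\ref{prop:independent_estimation}, a reparameterization of the robust problem~\eqref{eq:robust_lqr} through System Level Synthesis, and a suboptimality argument that certifies the synthesized controller against $J_\star$. I would begin with the estimation step: Proposition~\ref{prop:independent_estimation} gives, with probability $1-\delta$ over the data, bounds on $\epsilon_A = \ltwonorm{A-\Ah}$ and $\epsilon_B = \ltwonorm{B-\Bh}$ of order $\sigma_w\sqrt{(\statedim+\inputdim)\log(1/\delta)/N}$ (with the $A$-error additionally weighted by the least-controllable mode through $\lambda_{\min}$). These are precisely the radii of the perturbation set in~\eqref{eq:robust_lqr}, and they decay at the $N^{-1/2}$ rate, supplying the $\sqrt{(\statedim+\inputdim)\log(1/\delta)/N}$ factor in the final bound.

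Second, I would deploy the SLS reparameterization. Instead of searching over $\tf K$ directly, one optimizes over the closed-loop responses $(\Phixh, \Phiuh)$ mapping disturbance to state and input; the LQR cost becomes the squared $\htwo$ norm of a $(Q^{1/2}, R^{1/2})$-weighted stack of these responses, while realizability on the nominal model is the affine constraint $\begin{bmatrix} zI - \Ah & -\Bh \end{bmatrix}\begin{bmatrix} \Phixh \\ \Phiuh \end{bmatrix} = I$. The robustness lemma underlying the method states that when such nominal responses are run on the true plant, the realized closed loop is $\begin{bmatrix} \Phixh \\ \Phiuh \end{bmatrix}(I - \Dh)^{-1}$ with $\Dh = \Delta_A \Phixh + \Delta_B \Phiuh$, so by small gain the true system is stabilized whenever $\hinfnorm{\Dh} < 1$, and the realized cost $\Jh$ is at most the nominal cost inflated by $(1 - \hinfnorm{\Dh})^{-1}$. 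Using $\hinfnorm{\Dh} \le \hinfnorm{\begin{bmatrix} \epsilon_A \Phixh \\ \epsilon_B \Phiuh \end{bmatrix}}$ (up to a fixed weighting) renders robust synthesis a quasiconvex program in $(\Phixh, \Phiuh)$, which is what $\tf \Kh$ solves.

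Third, and this is the crux, I would establish the suboptimality bound. The responses $(\tf\Phi_x^\star, \tf\Phi_u^\star)$ realized by the true optimal $K_\star$ form a feasible point of the robust nominal program (after a mismatch correction) whose induced perturbation $\Dh$ satisfies $\hinfnorm{\Dh} \lesssim (\epsilon_A + \epsilon_B)\,\mathcal{C}_{\mathrm{LQR}}$, where $\mathcal{C}_{\mathrm{LQR}}$ aggregates the $\hinf$ sensitivity and control magnitude of the optimal closed loop. Because $\tf \Kh$ minimizes the robust objective, its true cost is no larger than that of this comparison point, yielding $\Jh \le \bigl(1 + \mathcal{O}(\mathcal{C}_{\mathrm{LQR}}(\epsilon_A + \epsilon_B))\bigr) J_\star$ once $\epsilon_A + \epsilon_B$ is small enough (i.e.\ $N$ large enough) to keep the relevant $\hinf$ quantity below one. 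Dividing by $J_\star$ and substituting the estimation rate from the first step produces the claimed relative-error bound.

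The step I expect to be the main obstacle is the third. One must simultaneously (i) show the true-optimal response is near-feasible and near-optimal for the nominal robust program, (ii) control $\hinfnorm{\Dh}$ in terms of $\epsilon_A, \epsilon_B$ and the sensitivity $\mathcal{C}_{\mathrm{LQR}}$ with the right---not merely finite---constant, and (iii) certify that the inflation factor $(1 - \hinfnorm{\Dh})^{-1}$ stays close to one. This forces a delicate interplay between $\hinf$ and $\htwo$ estimates layered on top of the quasiconvex SLS relaxation, and extracting the clean dependence on $\mathcal{C}_{\mathrm{LQR}}$ rather than a crude bound is where the analysis concentrates. A secondary technicality is that the infinite-horizon responses are infinite-dimensional, so a finite-impulse-response truncation (or a careful limiting argument) is needed to render the optimization finite-dimensional while keeping the truncation error within the same order.
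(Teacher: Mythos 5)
Your proposal is correct and follows essentially the same route as the paper: Proposition~\ref{prop:independent_estimation} for the rates, the SLS robustness bound $\Jh \le (1-\hinfnorm{\Dh})^{-1}J(\Ah,\Bh,\tf K)$ with $H_\alpha$ controlling $\hinfnorm{\Dh}$, and then the crux you identify is exactly the paper's Lemma~\ref{lemma:feasibility} plus Theorem~\ref{thm:lqr_cost}, where the optimal controller's responses on the nominal system (with the mismatch correction $(I+\tf\Delta)^{-1}$) serve as the feasible comparison point, yielding $\zeta = (\epsilon_A + \epsilon_B\ltwonorm{\trueK})\hinfnorm{\Res{A+B\trueK}}$ and Corollary~\ref{coro:lqr_cost_iid}. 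Your only deviation is treating the FIR truncation as part of this proof; the paper proves the stated result for the infinite-dimensional program and handles truncation separately in Theorem~\ref{thm:FIR_subopt}.
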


The complexity term $\mathcal{C}_{\mathrm{LQR}}$ depends on the rollout length $T$, the true dynamics, the matrices $(Q,R)$ which define the LQR cost, and the variances $\sigma_u^2$ and $\sigma_w^2$ of the control and noise inputs, respectively.  The $1-\delta$ probability comes from the probability of estimation error from Proposition~\ref{prop:independent_estimation}.  The particular form of $\mathcal{C}_{\mathrm{LQR}}$ and concrete requirements on $N$ are both provided in Section~\ref{sec:subopt}.

Though the optimization problem formulated by SLS is infinite dimensional, in Section~\ref{sec:computation} we provide two finite dimensional upper bounds on the optimization that inherit the stability guarantees of the SLS formulation. Moreover, we show via numerical experiments in Section \ref{sec:experiments} that the
controllers synthesized by our optimization do indeed provide stabilizing
controllers with small relative error.  We further show that settings exist
wherein a na{\"{i}}ve synthesis procedure that ignores the uncertainty in the
state-space parameter estimates produces a controller that performs poorly (or
has unstable closed-loop behavior) relative to the controller synthesized using the SLS
procedure.


\subsection{Related Work}   
\label{sec:related} 
We first describe related work in the \emph{estimation of unknown dynamical systems} and then turn to connections in the literature on \emph{robust control
with uncertain models}. We will end this review with a discussion of a few works from the
reinforcement learning community that have attempted to address the LQR problem and related variants.

\paragraph{Estimation of unknown dynamical systems.}

Estimation of unknown systems, especially linear dynamical systems, has a long history
in the system identification subfield of control theory.
While the text of Ljung~\cite{ljung99} covers the classical asymptotic results,
our interest is primarily in nonasymptotic results.
Early results~\cite{campi2002finite,vidyasagar2008learning} on nonasymptotic
rates for parameter identification featured conservative bounds which
are exponential in the system degree and other relevant quantities.
More recently, Bento et al.~\cite{bento10} show that when the $A$ matrix is
stable and induced by a sparse graph, then one can recover the support of $A$
from a single trajectory using $\ell_1$-penalized least squares.
Furthermore, Hardt et al.~\cite{hardt2016gradient}
provide the first polynomial time guarantee for identifying stable linear systems with outputs.
Their guarantees, however, are in terms of predictive output performance of the
model, and require
an assumption on the true system that is more stringent than stability.
 It is not clear how their statistical risk guarantee can be
used in a downstream robust synthesis procedure.

Next, we turn our attention to system identification of linear systems in the
frequency domain.
A comprehensive text on these methods (which
differ from the aforementioned state-space methods) is the work by Chen and Gu~\cite{chen00}.
For stable systems, Helmicki et al.~\cite{helmicki91}
propose to identify a finite impulse response (FIR) approximation by directly
estimating the first $r$ impulse response coefficients.
This method is analyzed in a non-adversarial probabilistic setting
by \cite{goldenshluger98,tu17}, who prove that a polynomial number of samples are sufficient to recover a
FIR filter which approximates the true system in both $\ell_p$-norm and $\hinf$-norm.
However, transfer function methods do not easily allow for
optimal control with state variables, since they only model the input/output
behavior of the system.

In parallel to the system identification community, identification
of auto-regressive time series models is a widely studied topic in the statistics
literature (see e.g. Box et al.~\cite{box08} for the classical results).
Goldenshluger and Zeevi~\cite{goldenshluger01}
show that the coefficients of
a stationary autoregressive model can be estimated from a single trajectory
of length polynomial in $1/(1-\rho)$ via least squares, where $\rho$ denotes
the stability radius of the process. They also prove that their rate is minimax
optimal.  More recently, several authors
\cite{kuznetsov17,mcdonald17,mohri2010stability} have studied
generalization bounds for non i.i.d.\ data, extending the standard
learning theory guarantees for independent data.
At the crux of these arguments lie various mixing assumptions~\cite{yu94},
which limits the analysis to only hold for stable dynamical systems.
Results in this line of research suggest that systems with smaller
mixing time (i.e. systems that are more stable) are easier to identify
(i.e. take less samples).
Our result in Proposition~\ref{prop:independent_estimation}, however, suggests instead that identification benefits from more easily excitable systems. While our analysis holds when we have access to full state observations,
empirical testing suggests that Proposition~\ref{prop:independent_estimation}
reflects reality more accurately than arguments based on mixing.
In follow up work we have begun to reconcile this issue for stable linear systems \cite{simchowitz18}.

\paragraph{Robust controller design.}

For end-to-end guarantees, parameter estimation is only half the picture.
Our procedure provides us with a family of system models
described by a nominal estimate and a set of unknown but bounded model errors.
It is therefore necessary to ensure that the computed controller has stability
and performance guarantees for any such admissible realization.  The problem of
robustly stabilizing such a family of systems is one with a rich history in the controls community. When modelling errors to the nominal system are allowed to be arbitrary
norm-bounded linear time-invariant (LTI) operators in feedback with the nominal plant, traditional
small-gain theorems and robust synthesis techniques can be applied to exactly
solve the problem \cite{dahleh1994control,ZDGBook}.  However, when the
errors are known to have more structure there are more sophisticated techniques based on structured
singular values and corresponding $\mu$-synthesis techniques
\cite{doyle-ssv,mu-mixed,mu-complex,mu-real} or integral quadratic constraints
(IQCs) \cite{megretski1997system}.  While theoretically
appealing and much less conservative than traditional small-gain approaches,
the resulting synthesis methods are both computationally intractable (although
effective heuristics do exist) and difficult to interpret analytically.  In
particular, we know of no results in the literature that bound the degradation in performance of controlling an uncertain system in terms of the size of the perturbations affecting it.

To circumvent this issue, we leverage a novel parameterization of robustly
stabilizing controllers based on the SLS framework for
controller synthesis \cite{SysLevelSyn1}.
We describe this framework in more detail in Section~\ref{sec:robust}.
Originally developed to allow for scaling optimal and robust controller synthesis
techniques to large-scale systems, the SLS framework can be
viewed as a generalization of the celebrated Youla parameterization
\cite{youla1976modern}. We show that SLS allows us to account for model uncertainty in a 
transparent and analytically tractable way.

\paragraph{PAC learning and reinforcement learning.}

Concerning end-to-end guarantees for LQR which couple estimation and control
synthesis, our work is most comparable to that of
Fiechter~\cite{fiechter1997pac}, who shows that the \emph{discounted} LQR
problem is PAC-learnable.  Fietcher analyzes an identify-then-control scheme
similar to the one we propose, but there are several key differences. First,
our probabilistic bounds on identification are much sharper, by leveraging
modern tools from high-dimensional statistics.
Second, Fiechter implicitly assumes that the true closed-loop system with the
estimated controller is not only stable but also contractive.  While this very
strong assumption is nearly impossible to verify in practice, contractive
closed-loop assumptions are actually pervasive throughout the literature, as we
describe below.  To the best of our knowledge, our work is the first to properly
lift this technical restriction.
Third, and most importantly, Fietcher proposes to directly solve the discounted
LQR problem with the identified model, and does not take into account any
uncertainty in the controller synthesis step. 
This is problematic for two reasons.
First, it is easy to construct an instance of a discounted LQR problem where
the \emph{optimal} solution does not stabilize the true system
(see e.g.~\cite{postoyan17}).
Therefore, even in the limit of infinite data, there is no guarantee
that the closed-loop system will be stable.
Second, even if the optimal solution does stabilize the underlying system, failing
to take uncertainty into account can lead to situations where the synthesized
controller does not. We will demonstrate this behavior in our experiments.

We are also particularly interested in the LQR problem as a baseline for more
complicated problems in reinforcement learning (RL).  LQR should be a
relatively easy problem in RL because on can learn the dynamics from anywhere
in the state space, vastly simplifying the problem of exploration. Hence, it is important to establish how well a pure exploration followed by exploitation strategy can fare on this simple baseline.

There are indeed some related efforts in RL and online learning.
Abbasi-Yadkori and Szepesvari~\cite{abbasi2011regret} propose to use the optimism in
the face of uncertainty (OFU) principle for the LQR problem, by maintaining
confidence ellipsoids on the true parameter, and using the controller which, in
feedback, minimizes the cost objective the most among all systems in the
confidence ellipsoid.  Ignoring the computational intractability of this
approach, their analysis reveals an exponential dependence in the system order
in their regret bound, and also makes the very strong assumption that
the optimal closed-loop systems are contractive
for every $A,B$ in the confidence ellipsoid.
The regret bound is improved by Ibrahimi et al.~\cite{ibrahimi12} to depend
linearly on the state dimension under additional sparsity constraints on the
dynamics.

In response to the computational intractability of the OFU principle,
researchers in RL and online learning have proposed the use of Thompson
sampling~\cite{russo17} for exploration. Abeille and Lazaric~\cite{abeille17}
show that the regret of a Thompson sampling approach for LQR scales as
$\widetilde{\mathcal{O}}(T^{2/3})$ and improve the result to
$\widetilde{\mathcal{O}}(\sqrt{T})$ in \cite{abeille18}, where $\widetilde{\mathcal{O}}(\cdot)$
hides poly-logarithmic factors.  However, their results are only valid for the
scalar $n=d=1$ setting.  Ouyang et al.~\cite{ouyang17} show that in a \emph{Bayesian}
setting, the expected regret can be bounded by
$\widetilde{\mathcal{O}}(\sqrt{T})$.
While this matches the bound of \cite{abbasi2011regret}, the Bayesian regret is with respect
to a particular Gaussian prior distribution over the true model, which differs
from the frequentist setting considered in \cite{abbasi2011regret,abeille17,abeille18}.
Furthermore, these works also make the same restrictive assumption that the
optimal closed-loop systems are uniformly contractive over some known set.

Jiang et al.~\cite{jiang17} propose a general exploration algorithm for
contextual decision processes (CDPs) and show that CDPs with low \emph{Bellman
rank} are PAC-learnable; in the LQR setting, they show the Bellman rank is
bounded by $\statedim^2$. While this result is appealing from an
information-theoretic standpoint, the proposed algorithm is computationally intractable for continuous problems.
Hazan et al.~\cite{hazan17,hazan18} study the problem of prediction in a linear dynamical system
via a novel spectral filtering algorithm. Their main result shows that one can compete in a regret
setting in terms of prediction error. As mentioned previously, converting prediction error bounds into
concrete bounds on sub-optimality of control performance is an open question.
Fazel et al.~\cite{Fazel18} show that randomized search algorithms similar to policy gradient
can learn the optimal controller with a polynomial number of samples in the noiseless case;
an explicit characterization of the dependence of the sample complexity on the parameters
of the true system is not given.


\section{System Identification through Least-Squares} 
\label{sec:estimation}
To estimate a coarse model of the unknown system dynamics, we turn to the
simple and classical method of linear least squares. By running experiments
in which the system starts at $x_0=0$ and the dynamics evolve with a given input, we can record the resulting state observations. The set of inputs
and outputs from each such experiment will be called a {\it rollout}.
For system estimation, we excite the system with Gaussian noise for $N$ rollouts, each
of length $T$. The resulting
dataset is $\{(\vecx_t^{(\ell)}, \vecu_t^{(\ell)}) ~:~ 1 \leq \ell \leq N, 0 \leq
t \leq T\}$, where $t$ indexes the time in one rollout and $\ell$ indexes
independent rollouts. Therefore, we can estimate the system dynamics by
\begin{align}
\label{eq:ls_problem}
(\Ahat, \Bhat) \in \arg \min_{(A,B)} \sum_{\ell = 1}^N \sum_{t = 0}^{T - 1} \frac{1}{2} \ltwonorm{A\vecx_t^{(\ell)} + B\vecu_t^{(\ell)} - \vecx_{t + 1}^{(\ell)}}^2.
\end{align}

For the Coarse-ID control setting, a good estimate of error is just as important as the estimate of the dynamics. Statistical theory and tools allow us to quantify the error of the least squares estimator. First, we present a theoretical analysis of the error in a simplified setting. Then, we describe a computational bootstrap procedure for error estimation from data alone.

\subsection{Least Squares Estimation as a Random Matrix Problem}
We begin by explicitly writing the form of the least squares estimator. First, fixing notation to simplify the presentation, let $\Theta := \begin{bmatrix} A & B \end{bmatrix}^* \in \R^{(\statedim + \inputdim) \times \statedim}$
and let $z_t := \begin{bmatrix} x_t \\ u_t \end{bmatrix} \in \R^{\statedim + \inputdim}$.
Then the system dynamics can be rewritten, for all $t \geq 0$,
\begin{align*}
  x_{t+1}^\T = z_t^\T \Theta + w_t^\T \:.
\end{align*}
Then in a single rollout, we will collect
\begin{align}\label{eq:single_trial}
  X := \begin{bmatrix} x_1^\T \\ x_2^\T \\ \vdots \\ x_T^\T \end{bmatrix} \:, \:\:
  Z := \begin{bmatrix} z_0^\T \\ z_1^\T \\ \vdots \\ z_{T-1}^\T \end{bmatrix} \:, \:\:
  W := \begin{bmatrix} w_0^\T \\ w_1^\T \\ \vdots \\ w_{T-1}^\T \end{bmatrix} \:.
\end{align}
The system dynamics give the identity $ X = Z \Theta + W $.
Resetting state of the system to $x_0=0$ each time,
we can perform $N$ rollouts and collect $N$ datasets like \eqref{eq:single_trial}. Having the ability to reset the system to a state independent of past observations will be important for the analysis in the following section, and it is also practically important for potentially unstable systems.
Denote the data for each rollout as $(X^{(\ell)}, Z^{(\ell)}, W^{(\ell)})$.
With slight abuse of notation, let
$X_N$ be composed of vertically stacked $X^{(\ell)}$, and similarly for
$Z_N$ and $W_N$. Then we have
\begin{align*}
  X_N = Z_N \Theta + W_N \:.
\end{align*}
The full data least squares estimator for $\Theta$ is (assuming for now invertibility of $Z_N^\T Z_N$),
\begin{align}\label{eq:LS_est}
  \widehat{\Theta} = (Z_N^\T Z_N)^{-1} Z_N^\T X_N = \Theta + (Z_N^\T Z_N)^{-1} Z_N^\T W_N \:.
\end{align}
Then the estimation error is given by
\begin{align}\label{eq:est_error}
   E := \widehat{\Theta} - \Theta = (Z_N^\T Z_N)^{-1} Z_N^\T W_N \:.
\end{align}
The magnitude of this error is the quantity of interest in determining confidence sets around estimates $(\widehat A,
\widehat B)$. However, since $W_N$ and $Z_N$ are not independent, this
estimator is difficult to analyze using standard methods. While this type of
analysis is an open problem of interest, in this paper we turn instead to a simplified estimator.

\subsection{Theoretical Bounds on Least Squares Error}
\label{sec:th_bounds}
In this section, we work out the statistical rate for the least squares estimator which uses just the last sample of each trajectory $(\vecx_T^{(\ell)}, \vecx_{T-1}^{(\ell)}, \vecu_{T-1}^{(\ell)})$. This estimation procedure is made precise in Algorithm~\ref{alg:independent_estimation}.
Our analysis ideas are analogous to those used to prove statistical rates for standard linear regression, and they leverage recent tools in nonasymptotic analysis of random matrices. The result is presented above in Proposition~\ref{prop:independent_estimation}.
\begin{center}
\begin{algorithm}[h!]
   \caption{Estimation of linear dynamics with independent data}
\begin{algorithmic}[1]
\FOR{$\ell$ from $1$ to $N$}
\STATE $\vecx_0^{(\ell)} = 0$
\FOR{$t$ from $0$ to $T - 1$}
\STATE $\vecx_{t + 1}^{(\ell)} = \trueA \vecx_t^{(\ell)} + \trueB \vecu_t^{(\ell)} + \vecw_t^{(\ell)}$ with $\vecw_t^{(\ell)} \iid \Ncal(0, \sigma_w^2 I_\statedim)$ and $\vecu_t^{(\ell)} \iid \Ncal(0, \sigma_u^2 I_\inputdim)$.
\ENDFOR
\ENDFOR
\STATE $(\Ahat, \Bhat) \in \arg\min_{(A,B)} \sum_{\ell = 1}^N \frac{1}{2} \ltwonorm{ A\vecx_{T - 1}^{(\ell)} + B\vecu_{T - 1}^{(\ell)} - \vecx_{T}^{(\ell)}}^2$
\end{algorithmic}
   \label{alg:independent_estimation}
 \end{algorithm}
\end{center}
In the context of Proposition~\ref{prop:independent_estimation}, a single data point from each $T$-step rollout is used.  We emphasize that this strategy results in independent data, which can be seen by defining the estimator matrix directly. The previous estimator \eqref{eq:LS_est} is amended as follows: the matrices defined in (\ref{eq:single_trial}) instead include only the final timestep of each trial,
$X_N = \begin{bmatrix} x_T^{(1)} & x_T^{(2)} & \hdots & x_T^{(N)} \end{bmatrix}^\T $, and similar modifications are made to $Z_N$ and $W_N$. The estimator~\eqref{eq:LS_est} uses these modified matrices, which now contain independent rows. To see this, recall the definition of $G_T$ and $F_T$ from \eqref{eq:gramians},
\begin{align*}
  G_T = \begin{bmatrix} A^{T-1} B & A^{T-2} B & ... & B \end{bmatrix} \:, \:\:
  F_T = \begin{bmatrix} A^{T-1} & A^{T-2} & ... & I_\statedim \end{bmatrix} \:.
\end{align*}
We can unroll the system dynamics and see that
\begin{align} \label{eq:unrolled_state_eq}
  x_T = G_T \begin{bmatrix} u_0 \\ u_1 \\ \vdots \\ u_{T-1} \end{bmatrix} + F_T \begin{bmatrix} w_0 \\ w_1 \\ \vdots \\ w_{T-1}\end{bmatrix} \:.
\end{align}
Using Gaussian excitation, $u_t\sim \Ncal(0,\sigma^2_u I_\inputdim)$ gives
\begin{align}\label{eq:row_distribution}
  \begin{bmatrix} x_T \\ u_T \end{bmatrix} \sim \Ncal\left(0,  \begin{bmatrix} \sigma^2_u G_TG_T^\T + \sigma^2_w F_TF_T^\T & 0 \\ 0 & \sigma^2_u I_\inputdim \end{bmatrix}  \right) \:.
\end{align}
Since $F_TF_T^\T \succ 0$, as long as both $\sigma_u,\sigma_w$ are positive, this is a non-degenerate distribution.

Therefore, bounding the estimation error can be achieved via proving a result on the error in random design linear regression with vector valued observations.
First, we present a lemma which bounds the spectral norm of the product of two independent Gaussian matrices.
\begin{lemma}
\label{lemma:product_gaussians}
Fix a $\delta \in (0, 1)$ and $N \geq 2 \log(1/\delta)$.
Let $f_k \in \R^m$, $g_k \in \R^n$ be independent random vectors
$f_k \sim \Ncal(0, \Sigma_f)$ and $g_k \sim \Ncal(0, \Sigma_g)$
for $1 \leq k \leq N$. With probability at least $1-\delta$,
\begin{align*}
  \bigspectralnorm{ \sum_{k=1}^{N} f_k g_k^\T } \leq 4 \spectralnorm{\Sigma_f}^{1/2}\spectralnorm{\Sigma_g}^{1/2} \sqrt{N(m+n)\log(9/\delta)} \:.
\end{align*}
\end{lemma}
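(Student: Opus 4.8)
The plan is to reduce the claim to a bound on the bilinear form of the random matrix $M := \sum_{k=1}^N f_k g_k^\T$ taken over a pair of $\epsilon$-nets, and then to control each scalar bilinear form by a Chernoff argument tailored to products of independent Gaussians. First I would strip off the covariances: writing $f_k = \Sigma_f^{1/2}\phi_k$ and $g_k = \Sigma_g^{1/2}\gamma_k$ with $\phi_k \sim \Ncal(0,I_m)$, $\gamma_k \sim \Ncal(0,I_n)$ all independent, we have $M = \Sigma_f^{1/2}\big(\sum_k \phi_k\gamma_k^\T\big)\Sigma_g^{1/2}$, so that $\spectralnorm{M} \le \spectralnorm{\Sigma_f}^{1/2}\spectralnorm{\Sigma_g}^{1/2}\,\bigspectralnorm{\sum_k \phi_k \gamma_k^\T}$. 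This isolates the geometric-mean prefactor and reduces everything to the isotropic case, where it remains to show $\bigspectralnorm{\sum_k \phi_k\gamma_k^\T} \le 4\sqrt{N(m+n)\log(9/\delta)}$ with probability $1-\delta$.

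Second, I would discretize. Taking $\tfrac14$-nets $\mathcal{T}_m \subset S^{m-1}$ and $\mathcal{T}_n \subset S^{n-1}$ of cardinalities at most $9^m$ and $9^n$, a standard covering argument gives $\bigspectralnorm{\sum_k \phi_k\gamma_k^\T} \le 2\max_{u\in\mathcal{T}_m,\,v\in\mathcal{T}_n} u^\T\big(\sum_k\phi_k\gamma_k^\T\big)v$. The $9^{m+n}$ cardinality of the product net is exactly what produces the $(m+n)\log 9$ contribution which, combined with $\log(1/\delta)$, I will fold into the factor $(m+n)\log(9/\delta)$ appearing in the statement.

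Third, which is the analytic heart, I would fix $u,v$ on the nets and study the scalar $S = \sum_k (u^\T\phi_k)(v^\T\gamma_k)$, a sum of $N$ i.i.d.\ products of two independent standard normals. Conditioning on $v^\T\gamma_k$ and integrating out $u^\T\phi_k \sim \Ncal(0,1)$ gives the moment generating function $\E e^{\lambda(u^\T\phi_k)(v^\T\gamma_k)} = (1-\lambda^2)^{-1/2}$ for $\abs{\lambda} < 1$, hence $\E e^{\lambda S} = (1-\lambda^2)^{-N/2}$. Using $(1-\lambda^2)^{-1/2} \le e^{\lambda^2}$ on $\lambda^2 \le \tfrac12$ yields the sub-Gaussian bound $\E e^{\lambda S} \le e^{N\lambda^2}$, and a Chernoff argument with $\lambda = t/(2N)$ gives $\Pr[\abs{S} \ge t] \le 2e^{-t^2/(4N)}$ for $t \le \sqrt 2\,N$. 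Choosing $t = 2\sqrt{N(m+n)\log(9/\delta)}$ and taking a union bound over the $9^{m+n}$ net points drives the failure probability below $\delta$; multiplying by the factor $2$ from the net and by the prefactor from the first step then delivers the claimed inequality.

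I expect the main obstacle to be the product concentration in the third step: unlike a sum of Gaussians, each summand $(u^\T\phi_k)(v^\T\gamma_k)$ is only sub-exponential, so the Chernoff bound is genuinely sub-Gaussian only for moderate deviations $t \lesssim N$, crossing over to a heavier sub-exponential tail beyond that. The target deviation $t \asymp \sqrt{N(m+n)\log(9/\delta)}$ must therefore be checked to lie in the sub-Gaussian range; this is precisely where a lower bound on $N$ enters, and the hypothesis $N \ge 2\log(1/\delta)$ is what keeps the Chernoff exponent and the net union bound mutually compatible. Pinning down the numerical constants so that they close at exactly $4$ and $\log(9/\delta)$ is then a matter of bookkeeping the $\tfrac14$-net constant $9$ against the factor-of-two losses incurred at the covering and symmetrization steps.
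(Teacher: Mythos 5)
Your proposal follows the paper's proof almost step for step: the same $1/4$-net discretization with at most $9^{m}\cdot 9^{n}$ points and the covering loss factor $2$, the same reduction of the operator norm to scalar bilinear forms $u^\T\bigl(\sum_k f_k g_k^\T\bigr)v$, the same observation that each summand is a product of two independent scalar Gaussians, and the same union bound over the net at the end. The only substantive difference is the middle step: the paper invokes a generic Bernstein inequality for sub-exponential ($\psi_1$-bounded) variables as a black box, while you derive the needed tail bound from the exact moment generating function $\E e^{\lambda S}=(1-\lambda^2)^{-N/2}$ together with the elementary estimate $(1-\lambda^2)^{-1/2}\le e^{\lambda^2}$ on $\lambda^2\le \tfrac12$. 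That substitution is correct and makes the concentration step self-contained; likewise your whitening step $f_k=\Sigma_f^{1/2}\phi_k$, $g_k=\Sigma_g^{1/2}\gamma_k$ is a clean equivalent of the paper's in-line variance bookkeeping.

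There is, however, a genuine gap, located exactly where you flagged the danger and then waved it away. Your Chernoff bound $\Pr[\abs{S}\ge t]\le 2e^{-t^2/(4N)}$ is valid only when the optimizing $\lambda=t/(2N)$ satisfies $\lambda^2\le\tfrac12$, i.e.\ only for $t\le\sqrt{2}\,N$. With your choice $t=2\sqrt{N(m+n)\log(9/\delta)}$ this is the condition $N\ge 2(m+n)\log(9/\delta)$, which the stated hypothesis $N\ge 2\log(1/\delta)$ does \emph{not} imply: the union bound over $9^{m+n}$ net points forces the per-point confidence level down to $\delta/9^{m+n}$, and the resulting dimension-dependent term $(m+n)\log 9$ is precisely what the hypothesis lacks. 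Beyond the crossover $t\asymp N$ the tail of $S$ really is sub-exponential ($e^{-ct}$), so no bookkeeping can rescue the claimed $\sqrt{N}$-type deviation there; indeed the lemma as stated fails in that regime — take $N=1$, $\delta\ge e^{-1/2}$, $\Sigma_f=I_m$, $\Sigma_g=I_n$ with $m=n$ large, where $\bigspectralnorm{f_1g_1^\T}=\twonorm{f_1}\twonorm{g_1}\approx m$ with high probability while the claimed bound is $O(\sqrt{m\log(9/\delta)})$. To be fair, the paper's own proof has the identical defect: its quoted Bernstein lemma requires the number of summands to exceed $2\log(1/\delta')$ at failure level $\delta'$, yet it is applied with $\delta'=\delta/9^{m+n}$, which needs $N\ge 2\bigl((m+n)\log 9+\log(1/\delta)\bigr)$. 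So you have reproduced the paper's gap rather than introduced a new one; the clean fix, for both proofs, is to strengthen the hypothesis to $N\gtrsim (m+n)\log(9/\delta)$, and the same amendment should propagate to the assumption \eqref{eq:N_assumption} used in Lemma~\ref{lemma:random_design_bound}. A final cosmetic point: with your two-sided tails the union bound yields failure probability $2\delta^{m+n}$, which exceeds $\delta$ when $\delta>\tfrac12$; since $\spectralnorm{Q}=\sup_{\twonorm{u}=\twonorm{v}=1}u^\T Qv$ requires no absolute values, the one-sided Chernoff bound suffices and removes the stray factor of $2$.
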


We believe this bound to be standard, and include a proof in the appendix for completeness.
Lemma~\ref{lemma:product_gaussians} shows that if $X$ is $n_1 \times N$ with i.i.d. $\Ncal(0, 1)$ entries
and $Y$ is $N \times n_2$ with i.i.d. $\Ncal(0, 1)$ entries, and $X$ and $Y$ are independent,
then with probability at least $1-\delta$ we have
\begin{align*}
  \spectralnorm{ X Y } \leq 4 \sqrt{N(n_1 + n_2) \log(9/\delta)} \:.
\end{align*}
Next, we state a standard nonasymptotic bound on
the minimum singular value of a standard Wishart matrix
(see e.g. Corollary 5.35 of~\cite{vershynin10}).
\begin{lemma}
\label{lemma:wishart_lower_bound}
Let $X \in \R^{N \times n}$ have i.i.d. $\Ncal(0, 1)$ entries.
With probability at least $1-\delta$,
  \begin{align*}
    \sqrt{\lambda_{\min}(X^\T X)} \geq \sqrt{N} - \sqrt{n} - \sqrt{2\log(1/\delta)} \:.
  \end{align*}
\end{lemma}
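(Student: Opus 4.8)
The plan is to pass to the smallest singular value $\sigma_{\min}(X)$, since $\sqrt{\lambda_{\min}(X^\T X)} = \sigma_{\min}(X) = \min_{\twonorm{v} = 1} \twonorm{Xv}$, so the claim becomes the lower tail bound $\sigma_{\min}(X) \geq \sqrt{N} - \sqrt{n} - \sqrt{2\log(1/\delta)}$ with probability at least $1 - \delta$. I would prove this by splitting the estimate into two pieces: a lower bound on the expectation $\E\,\sigma_{\min}(X) \geq \sqrt{N} - \sqrt{n}$, and a Gaussian concentration inequality controlling the fluctuation of $\sigma_{\min}(X)$ around its mean by $\sqrt{2\log(1/\delta)}$.

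For the concentration step, I would first observe that $X \mapsto \sigma_{\min}(X)$ is $1$-Lipschitz with respect to the Frobenius norm: by Weyl's perturbation inequality for singular values, $\abs{\sigma_{\min}(X) - \sigma_{\min}(Y)} \leq \spectralnorm{X - Y} \leq \norm{X - Y}_F$. Regarding the $Nn$ entries of $X$ as a standard Gaussian vector in $\R^{Nn}$, the Gaussian concentration inequality for Lipschitz functions then gives $\Pr[\sigma_{\min}(X) \leq \E\,\sigma_{\min}(X) - t] \leq e^{-t^2/2}$ for every $t \geq 0$. Setting $t = \sqrt{2\log(1/\delta)}$ makes the right-hand side equal to $\delta$, and combined with the expectation bound this yields the proposition.

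The substantive step, and the main obstacle, is the lower bound $\E\,\sigma_{\min}(X) \geq \sqrt{N} - \sqrt{n}$. I would obtain it from Gordon's Gaussian min-max comparison inequality applied to the representation $\sigma_{\min}(X) = \min_{\twonorm{v}=1}\max_{\twonorm{u}=1} u^\T X v$. Introducing independent standard Gaussian vectors $g \in \R^N$ and $h \in \R^n$ and the comparison process $g^\T u + h^\T v$, Gordon's inequality gives $\E\min_{\twonorm{v}=1}\max_{\twonorm{u}=1} u^\T X v \geq \E\min_{\twonorm{v}=1}\max_{\twonorm{u}=1}(g^\T u + h^\T v)$, and the right-hand min-max evaluates explicitly to $\E\twonorm{g} - \E\twonorm{h}$. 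The delicate point is verifying the increment hypotheses of Gordon's theorem; a short computation on the unit spheres shows the cross increments satisfy the required inequality because $2(1 - \langle u, u'\rangle)(1 - \langle v, v'\rangle) \geq 0$, with equality of increments when $v = v'$. Finally, since $\E\twonorm{g}$ and $\E\twonorm{h}$ are the means of $\chi$ distributions with $N$ and $n$ degrees of freedom, and the gap $\sqrt{k} - \E\,\chi_k$ is decreasing in $k$, one gets $\E\twonorm{g} - \E\twonorm{h} \geq \sqrt{N} - \sqrt{n}$, completing the argument. The entire difficulty is concentrated in this comparison inequality; the Lipschitz concentration is routine once the Lipschitz constant is identified.
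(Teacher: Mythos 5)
The paper does not prove this lemma at all---it simply cites Corollary 5.35 of Vershynin's notes---and your argument is precisely the standard proof of that cited result: Gordon's Gaussian min-max comparison to get $\E\,\sigma_{\min}(X) \geq \E\twonorm{g} - \E\twonorm{h} \geq \sqrt{N} - \sqrt{n}$, followed by Gaussian concentration for the $1$-Lipschitz function $X \mapsto \sigma_{\min}(X)$, with $t = \sqrt{2\log(1/\delta)}$ giving exactly the stated probability. Your proposal is correct and complete in its essentials; the one step you assert without proof, the monotonicity of $k \mapsto \sqrt{k} - \E\,\chi_k$ (needed for the sharp constant $\sqrt{N}-\sqrt{n}$ rather than a weaker $\sqrt{N}-\sqrt{n}-o(1)$), is a true and standard, if slightly fiddly, Gamma-function fact, so nothing essential is missing.
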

We combine the previous lemmas into a statement on the error of random design regression.
\begin{lemma}
\label{lemma:random_design_bound}
Let $z_1, ..., z_N \in \R^{n}$ be i.i.d. from $\Ncal(0, \Sigma)$ with $\Sigma$ invertible.
Let $Z^\T := \begin{bmatrix} z_1 & ... & z_N \end{bmatrix}$.
Let $W \in \R^{N \times p}$ with each entry i.i.d. $\Ncal(0, \sigma_w^2)$ and independent of $Z$.
Let $E := (Z^\T Z)^{\dag} Z^\T W$, and
suppose that
\begin{align}
  N \geq 8n + 16 \log(2/\delta) \label{eq:N_assumption} \:.
\end{align}
For any fixed matrix $Q$, we have
with probability at least $1-\delta$,
\begin{align*}
  \spectralnorm{QE} \leq 16 \sigma_w \spectralnorm{Q \Sigma^{-1/2}} \sqrt{\frac{(n+p)\log(18/\delta)}{N}} \:.
\end{align*}
\end{lemma}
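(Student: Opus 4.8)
The plan is to reduce to a standard Gaussian design by whitening, and then apply Lemmas~\ref{lemma:wishart_lower_bound} and~\ref{lemma:product_gaussians} to the two random factors of $E$ separately. Write $\Sigma = \Sigma^{1/2}\Sigma^{1/2}$ and set $\bar{Z} := Z\Sigma^{-1/2}$, so that $\bar{Z} \in \R^{N\times n}$ has i.i.d.\ $\Ncal(0,1)$ entries and is independent of $W$. Factoring $Z^\T Z = \Sigma^{1/2}(\bar{Z}^\T\bar{Z})\Sigma^{1/2}$ and $Z^\T = \Sigma^{1/2}\bar{Z}^\T$, on the event that $\bar{Z}^\T\bar{Z}$ is invertible the pseudoinverse is a genuine inverse and the error collapses to
\begin{align*}
  E = (Z^\T Z)^{-1}Z^\T W = \Sigma^{-1/2}(\bar{Z}^\T\bar{Z})^{-1}\bar{Z}^\T W \:.
\end{align*}
Submultiplicativity of the spectral norm then yields the decomposition
\begin{align*}
  \spectralnorm{QE} \leq \spectralnorm{Q\Sigma^{-1/2}}\cdot\spectralnorm{(\bar{Z}^\T\bar{Z})^{-1}}\cdot\spectralnorm{\bar{Z}^\T W} \:,
\end{align*}
which isolates the deterministic $Q\Sigma^{-1/2}$ factor of the claim and leaves two independent random factors to control.

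For the middle factor I would invoke Lemma~\ref{lemma:wishart_lower_bound} with failure probability $\delta/2$: with probability at least $1-\delta/2$, $\sqrt{\lambda_{\min}(\bar{Z}^\T\bar{Z})} \geq \sqrt{N} - \sqrt{n} - \sqrt{2\log(2/\delta)}$. The role of the sample-size hypothesis $N \geq 8n + 16\log(2/\delta)$ is precisely to make this lower bound a constant fraction of $\sqrt{N}$: since $N \geq 8(n + 2\log(2/\delta))$, the Cauchy--Schwarz inequality gives $\sqrt{n} + \sqrt{2\log(2/\delta)} \leq \sqrt{2(n + 2\log(2/\delta))} \leq \sqrt{N}/2$, whence $\lambda_{\min}(\bar{Z}^\T\bar{Z}) \geq N/4$, i.e.\ $\spectralnorm{(\bar{Z}^\T\bar{Z})^{-1}} \leq 4/N$. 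In particular $\bar{Z}^\T\bar{Z}$ is invertible, which retroactively justifies the pseudoinverse simplification above.

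For the remaining factor, write $W = \sigma_w\bar{W}$ with $\bar{W}$ having i.i.d.\ $\Ncal(0,1)$ entries, and note that $\bar{Z}^\T\bar{W} = \sum_{k=1}^N \bar{z}_k\bar{w}_k^\T$, where $\bar{z}_k \sim \Ncal(0,I_n)$ and $\bar{w}_k \sim \Ncal(0,I_p)$ are the rows of $\bar{Z}$ and $\bar{W}$. Since $Z$ and $W$ are independent (so the $\bar{z}_k$ are independent of the $\bar{w}_k$) and $N \geq 16\log(2/\delta) \geq 2\log(2/\delta)$, Lemma~\ref{lemma:product_gaussians} applies with $\Sigma_f = I_n$, $\Sigma_g = I_p$, and failure probability $\delta/2$, giving $\spectralnorm{\bar{Z}^\T W} = \sigma_w\spectralnorm{\bar{Z}^\T\bar{W}} \leq 4\sigma_w\sqrt{N(n+p)\log(18/\delta)}$ with probability at least $1-\delta/2$.

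A union bound over the two failure events and multiplication of the three bounds finishes the argument: the numerical factors combine as $\tfrac{4}{N}\cdot 4\sqrt{N} = 16/\sqrt{N}$, producing exactly $16\sigma_w\spectralnorm{Q\Sigma^{-1/2}}\sqrt{(n+p)\log(18/\delta)/N}$. I do not expect a genuine obstacle here, since all the probabilistic content is outsourced to the two cited lemmas; the only things requiring care are the bookkeeping of constants---verifying that the sample-size hypothesis delivers the factor-of-two slack used in the $\lambda_{\min}$ bound---and the propagation of the $\delta/2$ splits into the advertised $\log(18/\delta)$.
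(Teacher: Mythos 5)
Your proposal is correct and follows essentially the same route as the paper's proof: the same whitening reduction to a standard Gaussian design (the paper phrases it as $Z \stackrel{d}{=} X\Sigma^{1/2}$, you as $\bar{Z} = Z\Sigma^{-1/2}$), the same three-factor spectral-norm decomposition, the same two lemmas each invoked at level $\delta/2$, and the same constant bookkeeping (your Cauchy--Schwarz step is the paper's $(a+b)^2 \leq 2(a^2+b^2)$ in disguise). No gaps; the argument matches the paper's in substance and in all numerical factors.
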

\begin{proof}
First, observe that $Z$ is equal in distribution to
$X \Sigma^{1/2}$, where $X \in \R^{N \times n}$ has i.i.d. $\Ncal(0, 1)$ entries.
By Lemma~\ref{lemma:wishart_lower_bound}, with probability at least $1-\delta/2$,
\begin{align*}
  \sqrt{\lambda_{\min}(X^\T X)} \geq \sqrt{N} - \sqrt{n} - \sqrt{2\log(2/\delta)} \geq \sqrt{N}/2 \:.
\end{align*}
The last inequality uses \eqref{eq:N_assumption} combined with the inequality
$(a+b)^2 \leq 2(a^2 + b^2)$.
Furthermore, by Lemma~\ref{lemma:product_gaussians} and \eqref{eq:N_assumption},
with probability at least $1-\delta/2$,
\begin{align*}
  \spectralnorm{X^\T W} \leq 4 \sigma_w \sqrt{N(n+p)\log(18/\delta)} \:.
\end{align*}
Let $\mathcal{E}$ denote the event which is the intersection
of the two previous events.
By a union bound, $\Pr(\mathcal{E}) \geq 1-\delta$.
We continue the rest of the proof assuming the event $\mathcal{E}$ holds.
Since $X^\T X$ is invertible,
\begin{align*}
  QE = Q(Z^\T Z)^{\dag} Z^\T W = Q (\Sigma^{1/2} X^\T X \Sigma^{1/2})^{\dag} \Sigma^{1/2} X^\T W = Q \Sigma^{-1/2} (X^\T X)^{-1} X^\T W \:.
\end{align*}
Taking operator norms on both sides,
\begin{align*}
  \spectralnorm{QE} \leq \spectralnorm{Q \Sigma^{-1/2}} \spectralnorm{(X^\T X)^{-1}} \spectralnorm{ X^\T W} = \spectralnorm{Q \Sigma^{-1/2}} \frac{\spectralnorm{X^\T W}}{\lambda_{\min}(X^\T X)} \:.
\end{align*}
Combining the inequalities above,
\begin{align*}
  \frac{\spectralnorm{X^\T W}}{\lambda_{\min}(X^\T X)} \leq 16\sigma_w \sqrt{\frac{(n+p)\log(18/\delta)}{N}} \:.
\end{align*}
The result now follows.
\end{proof}

Using this result on random design linear regression, we are now ready to analyze the estimation errors of the identification in Algorithm~\ref{alg:independent_estimation} and provide a proof of Proposition~\ref{prop:independent_estimation}.

\begin{proof}
Consider the least squares estimation error (\ref{eq:est_error}) with modified single-sample-per-rollout matrices.
Recall that rows of the design matrix $Z_N$ are distributed as independent normals, as in (\ref{eq:row_distribution}). Then applying Lemma~\ref{lemma:random_design_bound} with $Q_A = \begin{bmatrix} I_\statedim &
0 \end{bmatrix}$ so that $Q_A E$ extracts only the estimate for $A$, we conclude
that with probability at least $1 - \delta/2$,
\begin{align}
  \spectralnorm{\widehat{A} - A} \leq \frac{16 \sigma_w}{\sqrt{\lambda_{\min}(\sigma_u^2 G_TG_T^\T + \sigma_w^2 F_TF_T^\T)}}\sqrt{\frac{(\statedim + 2\inputdim)\log(36/\delta)}{N}} \:,
\end{align}
as long as $N \geq 8(\statedim + \inputdim) + 16\log(4/\delta)$.
Now applying Lemma~\ref{lemma:random_design_bound} under the same condition on $N$ with
$Q_B = \begin{bmatrix} 0 & I_\inputdim \end{bmatrix}$, we have
with probability at least $1 - \delta/2$,
\begin{align}
  \spectralnorm{\widehat{B} - B} \leq \frac{16\sigma_w}{\sigma_u}\sqrt{\frac{(\statedim + 2\inputdim)\log(36/\delta)}{N}} \:.
\end{align}
The result follows by application of the union bound.
\end{proof}

There are several interesting points to make about the guarantees offered by Proposition~\ref{prop:independent_estimation}.  First, as mentioned in the introduction, there are $\statedim(\statedim + \inputdim)$ parameters to learn and our bound states that we need  $O(\statedim + \inputdim)$ measurements,  each measurement providing $\statedim$ values.  Hence, this appears to be an optimal dependence with respect to the parameters $\statedim$ and $\inputdim$.   Second, note that intuitively, if the system amplifies the control and noise inputs in all directions of the state-space, as captured by the minimum eigenvalues of the control and disturbance Gramians $G_TG_T^*$ or $F_TF_T^*$, respectively, then the system has a larger ``signal-to-noise'' ratio and the system matrix $\trueA$ is easier to estimate.
On the other hand, this measure of the excitability of the system has no impact on learning $\trueB$. Unlike in Fiechter's work~\cite{fiechter1997pac}, we do not need to assume that $G_T G_T^\T$ is invertible. As long as the process noise is not degenerate, it will excite all modes of the system.

Finally, we note that the Proposition~\ref{prop:independent_estimation} offers a data independent guarantee for the estimation of the parameters $(A,B)$. We can also provide data dependent guarantees, which will be less conservative in practice. The next result shows how we can use the observed states and inputs to obtain more refined confidence sets than the ones offered by Proposition~\ref{prop:independent_estimation}. The proof is deferred to Appendix~\ref{app:data_dependent}.

\begin{proposition}
\label{prop:data_dependent}
Assume we have $N$ independent samples $(y^{(\ell)}, x^{(\ell)}, u^{(\ell)})$ such that
\begin{align*}
y^{(\ell)} = A x^{(\ell)} + B u^{(\ell)} + w^{(\ell)},
\end{align*}
where $w^{(\ell)}$ are i.i.d. $\mathcal{N}(0,\sigma_w^2 I_\statedim)$ and are independent from $x^{(\ell)}$ and $u^{(\ell)}$. Also, let us assume that $N \geq \statedim + \inputdim$. Then, with probability $1 - \delta$, we have

\begin{align*}
\begin{bmatrix}
(\widehat{A} - A)^\top \\
(\widehat{B} - B)^\top
\end{bmatrix}
\begin{bmatrix}
(\widehat{A} - A) & (\widehat{B} - B)
\end{bmatrix}
\preceq C(\statedim, \inputdim, \delta) \left(\sum_{\ell = 1}^N
\begin{bmatrix}
x^{(\ell)}\\
u^{(\ell)}
\end{bmatrix}
\begin{bmatrix}
(x^{(\ell)})^\top & (u^{(\ell)})^\top
\end{bmatrix}
\right)^{-1},
\end{align*}
where $C(\statedim, \inputdim, \delta) = \sigma_w^2 (\sqrt{\statedim + \inputdim} + \sqrt{\statedim} + \sqrt{2 \log(1/\delta)})^{2}$.
If the matrix on the right hand side has zero as an eigenvalue, we define the inverse of that eigenvalue to be infinity.
\end{proposition}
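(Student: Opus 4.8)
The plan is to reduce the claimed matrix inequality to a sharp bound on the largest singular value of a \emph{single} standard Gaussian matrix, exploiting the rotational invariance of $W$. Following the notation of Section~\ref{sec:estimation}, I set $\Theta = \begin{bmatrix} A & B \end{bmatrix}^\top$, let $z^{(\ell)} = \begin{bmatrix} x^{(\ell)} \\ u^{(\ell)} \end{bmatrix}$, and stack the data into $Y, Z, W$ so that $Y = Z\Theta + W$, where the rows of $W$ are independent $\Ncal(0,\sigma_w^2 I_\statedim)$ and independent of $Z$. Assuming $Z^\T Z$ invertible for the moment, the least squares error is $E := \widehat\Theta - \Theta = (Z^\T Z)^{-1} Z^\T W$, and one checks that $E = \begin{bmatrix} (\widehat A - A)^\top \\ (\widehat B - B)^\top \end{bmatrix}$. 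Hence the left-hand side of the claim is exactly $E E^\T$, while the right-hand side is $C(\statedim,\inputdim,\delta)\,(Z^\T Z)^{-1}$.

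First I would rewrite the target $E E^\T \preceq C\,(Z^\T Z)^{-1}$ in congruent form. Conjugating both sides by the symmetric positive definite matrix $(Z^\T Z)^{1/2}$ preserves the positive semidefinite ordering and turns the statement into $\spectralnorm{(Z^\T Z)^{-1/2} Z^\T W}^2 \leq C(\statedim,\inputdim,\delta)$. Introducing $U := Z (Z^\T Z)^{-1/2}$, one has $U^\T U = I_{\statedim+\inputdim}$, so $U$ has orthonormal columns and $(Z^\T Z)^{-1/2} Z^\T W = U^\T W$; the goal becomes the clean operator-norm bound $\spectralnorm{U^\T W}^2 \leq C(\statedim,\inputdim,\delta)$.

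The crux is the conditional distribution of $U^\T W$. Since $W$ is independent of $Z$ (hence of $U$), I condition on $Z$: each column of $W$ is $\Ncal(0, \sigma_w^2 I_N)$, so $U^\T$ applied to that column is $\Ncal(0, \sigma_w^2 U^\T U) = \Ncal(0, \sigma_w^2 I_{\statedim+\inputdim})$, and distinct columns stay independent. Thus, conditioned on $Z$, $U^\T W$ is an $(\statedim+\inputdim)\times \statedim$ matrix with i.i.d. $\Ncal(0,\sigma_w^2)$ entries, with no approximation incurred. I then invoke the upper-bound companion of Lemma~\ref{lemma:wishart_lower_bound} (the matching side of Corollary 5.35 of~\cite{vershynin10}): a $(\statedim+\inputdim)\times\statedim$ standard Gaussian matrix has largest singular value at most $\sqrt{\statedim+\inputdim}+\sqrt{\statedim}+\sqrt{2\log(1/\delta)}$ with probability at least $1-\delta$. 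Multiplying by $\sigma_w$ and squaring yields precisely $\spectralnorm{U^\T W}^2 \leq C(\statedim,\inputdim,\delta)$. Because this bound is uniform over the conditioning value of $Z$, it holds unconditionally, and undoing the congruence delivers the proposition.

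I expect the main obstacle to be the bookkeeping around possible rank-deficiency of $Z^\T Z$, which the statement accommodates via the convention that zero eigenvalues invert to $+\infty$. The clean fix is to replace inverses by pseudoinverses and take $U$ to be an orthonormal basis for $\mathrm{range}(Z^\T)$: then $E E^\T$ is supported on that subspace, where both sides are finite; the complementary null directions satisfy the inequality vacuously under the convention; and the conditional Gaussian computation runs with $r = \rank(Z) \leq \statedim+\inputdim$ columns, so the same singular-value estimate applies with $\sqrt{r} \leq \sqrt{\statedim+\inputdim}$ and the constant is only improved. Everything else is routine, and I would relegate this degenerate case to a remark.
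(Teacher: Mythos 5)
Your proof is correct and is essentially the paper's own argument: both reduce the claim to bounding $\spectralnorm{U^\T W}$ for a matrix $U$ with orthonormal columns spanning the column space of $Z$, observe that $U^\T W$ has i.i.d.\ $\Ncal(0,\sigma_w^2)$ entries by independence of $W$ and $Z$, and invoke the standard Gaussian operator-norm bound (the paper's Lemma~\ref{lem:operator_norm}, i.e.\ the upper half of Corollary 5.35 of~\cite{vershynin10}). The only cosmetic difference is that you construct $U = Z(Z^\T Z)^{-1/2}$ and conjugate the target inequality, whereas the paper takes $U$ from the SVD $Z = U\Lambda V^\T$ and bounds $EE^\T \preceq \spectralnorm{U^\T W}_2^2 (Z^\T Z)^{-1}$ directly; these $U$'s differ by a rotation and the algebra is identical.
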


Proposition~\ref{prop:data_dependent} is a general result that does not require the inputs $u^{(\ell)}$ to be normally distributed and
it allows the states $x^{(\ell)}$ to be arbitrary as long as all the samples $(y^{(\ell)}, x^{(\ell)}, u^{(\ell)})$ are independent and the process noise
$w^{(\ell)}$ is normally distributed.  Nonetheless, both Propositions~\ref{prop:independent_estimation} and \ref{prop:data_dependent} require estimating $(A, B)$ from independent samples. In practice, one would collect rollouts from the system, which consist of many dependent measurements. In that case, using all the data is preferable. Since the guarantees offered in this section do not apply in that case, in the next section we study a different procedure for estimating the size of the estimation error.

\subsection{Estimating Model Uncertainty with the Bootstrap}\label{sec:bootstrap}

In the previous sections we offered theoretical guarantees on the performance of the least squares estimation of $\trueA$ and $\trueB$ from independent samples. However, there are two important limitations to using such guarantees in practice to offer upper bounds on $\epsilon_A = \spectralnorm{\trueA - \Ahat}$ and $\epsilon_B = \spectralnorm{\trueB - \Bhat}$. First, using only one sample per system rollout is empirically less efficient than using all available data for estimation. Second, even optimal statistical analyses often do not recover constant factors that match practice. For purposes of robust control, it is important to obtain upper bounds on $\epsilon_A$ and $\epsilon_B$
that are not too conservative.
Thus, we aim to find $\widehat{\epsilon}_A$ and $\widehat{\epsilon}_B$ such that $\epsilon_A \leq \widehat{\epsilon}_A$ and $\epsilon_B \leq \widehat{\epsilon}_B$ with high probability.

We propose a vanilla bootstrap method for estimating $\widehat{\epsilon}_A$ and $\widehat{\epsilon}_B$.
Bootstrap methods have had a profound impact in both theoretical and applied statistics since their introduction~\cite{efron79}. These methods are used to estimate statistical quantities (e.g. confidence intervals) by sampling synthetic data from an empirical distribution determined by the available data. For the problem at hand we propose the procedure described in Algorithm~\ref{alg:bootstrap}.\footnote{We assume that $\sigma_u$ and $\sigma_w$ are known. Otherwise they
can be estimated from data.}

\begin{center}
\begin{algorithm}[h!]
   \caption{Bootstrap estimation of $\epsilon_A$ and $\epsilon_B$}
\begin{algorithmic}[1]
\STATE \textbf{Input:} confidence parameter $\delta$, number of trials $M$, data $\{(\vecx_{t}^{(i)}, \vecu_t^{(i)})\}_{\substack{1 \leq i \leq N\\ 1 \leq t \leq T}}$, and $(\Ahat, \Bhat)$ a minimizer of
$
\sum_{\ell = 1}^N \sum_{t = 0}^{T - 1} \frac{1}{2}\spectralnorm{A\vecx_{t}^{(\ell)} + B \vecu_{t}^{(\ell)} - \vecx_{t + 1}^{(\ell)}}^2.
$
\FOR{$M$ trials}
\FOR{$\ell$ from $1$ to $N$}
\STATE $\widehat{\vecx}_0^{(\ell)} = \vecx_0^{(\ell)}$
\FOR{$t$ from $0$ to $T - 1$}
\STATE $\widehat{\vecx}_{t + 1}^{(\ell)} = \Ahat \widehat{\vecx}_t^{(\ell)} + \Bhat \widehat{\vecu}_t^{(\ell)} + \widehat{\vecw}_t^{(\ell)}$ with $\widehat{\vecw}_t^{(\ell)} \iid \Ncal(0, \sigma_w^2 I_\statedim)$ and $\widehat{\vecu}_t^{(\ell)} \iid \Ncal(0, \sigma_u^2 I_\inputdim)$.
\ENDFOR
\ENDFOR
\STATE $(\widetilde{A}, \widetilde{B}) \in \arg\min_{(A,B)} \sum_{\ell = 1}^N \sum_{t = 0}^{T - 1} \frac{1}{2} \ltwonorm{ A\widehat{\vecx}_{t}^{(\ell)} + B\widehat{\vecu}_{t}^{(\ell)} - \widehat{\vecx}_{t + 1}^{(\ell)}}^2$.
\STATE record $\widetilde{\epsilon}_A = \ltwonorm{\Ahat - \widetilde{A}}$ and $\widetilde{\epsilon}_B = \ltwonorm{\Bhat - \widetilde{B}}$.
\ENDFOR
\STATE \textbf{Output:} $\widehat{\epsilon}_A$ and $\widehat{\epsilon}_B$, the $100(1- \delta)$th percentiles of the $\widetilde{\epsilon}_A$'s and the $\widetilde{\epsilon}_B$'s.
\end{algorithmic}
   \label{alg:bootstrap}
 \end{algorithm}
\end{center}

For $\widehat{\epsilon}_A$ and $\widehat{\epsilon}_B$ estimated by Algorithm~\ref{alg:bootstrap} we intuitively have
\begin{align*}
\PP(\spectralnorm{A - \Ahat} \leq \widehat{\epsilon}_A) \approx 1 - \delta \quad \text{and} \quad \PP(\spectralnorm{B - \Bhat} \leq \widehat{\epsilon}_B) \approx 1 - \delta.
\end{align*}

There are many known guarantees for
the bootstrap, particularly for the parametric version we use. We do not
discuss these results here; for more details see texts by Van Der Vaart and
Wellner~\cite{van1996weak}, Shao and Tu~\cite{shao2012jackknife}, and
Hall~\cite{hall2013bootstrap}. Instead, in
Appendix~\ref{sec:bootstrap-experiments} we show empirically the performance of
the bootstrap for our estimation problem. For mission critical systems, where empirical validation is insufficient, the statistical error bounds presented in Section~\ref{sec:th_bounds} give guarantees on the size of $\epsilon_A$, $\epsilon_B$. In general, data dependent error guarantees will be less conservative. In follow up work we offer guarantees similar to the ones presented in Section~\ref{sec:th_bounds} for estimation of linear dynamics from dependent data \cite{simchowitz18}.


\section{Robust Synthesis}
\label{sec:robust}

With estimates of the system $(\Ah,\Bh)$ and operator norm error bounds $(\epsilon_A,\epsilon_B)$ in hand, we now turn to control design. In this section we introduce some useful tools from \emph{System Level Synthesis} (SLS), a recently developed approach to control design that relies on a particular parameterization of signals in a control system \cite{virtual,SysLevelSyn1}. We review the main SLS framework, highlighting the key constructions that we will use to solve the robust LQR problem.  As we show in this and the following section, using the SLS framework, as opposed to traditional techniques from robust control, allows us to (a) compute robust controllers using semidefinite programming, and (b) provide sub-optimality guarantees in terms of the size of the uncertainties on our system estimates.

\subsection{Useful Results from System Level Synthesis}

The SLS framework focuses on the \emph{system responses} of a closed-loop system. As a motivating example, consider linear dynamics under a fixed a static state-feedback control policy $K$, i.e., let $u_k = Kx_k$.  Then, the closed {loop map from} the disturbance process $\{w_0, w_1, \dots\}$ to the state $x_k$ and control input $u_k$ at time $k$ is given by
\begin{equation}
\begin{array}{rcl}
x_k &=& \sum_{t=1}^{k} (\trueA + \trueB K)^{k-t}w_{t-1} \:, \\
u_k &=& \sum_{t=1}^k K(\trueA + \trueB K)^{k-t}w_{t-1} \:.
\end{array}
\label{eq:impulse-response}
\end{equation}
Letting $\Phi_x(k) := (\trueA + \trueB K)^{k-1}$ and $\Phi_u(k) := K(\trueA + \trueB K)^{k-1}$, we can rewrite {Eq.~\eqref{eq:impulse-response}} as
\begin{equation}
\begin{bmatrix} x_k \\ u_k \end{bmatrix} =
\sum_{t=1}^k \begin{bmatrix}\Phi_x(k-t+1) \\ \Phi_u(k-t+1) \end{bmatrix}w_{t-1} \:,
\label{eq:phis}
\end{equation}
where $\{\Phi_x(k),\Phi_u(k)\}$ are called the \emph{closed-loop system response elements} induced by the static controller $K$.

Note that even when the control is a linear function of the state and its past history (i.e. a linear dynamic controller), the expression~\eqref{eq:phis} is valid. Though we conventionally think of the control policy as a function mapping states to input, whenever such a mapping is linear, both the control input and the state can be written as linear functions of the disturbance signal $w_t$. With such an identification, the dynamics require that the $\{\Phi_x(k),\Phi_u(k)\}$ must obey the constraints
\begin{equation}
\Phi_x(k+1) = \trueA \Phi_x(k) + \trueB \Phi_u(k) \:, \:\: \Phi_x(1) = I \:, \:\: \forall k \geq 1 \:,
\label{eq:time-achievability}
\end{equation}
As we describe in more detail below in Theorem \ref{thm:param}, these constraints are in fact both necessary and sufficient. Working with closed-loop system responses allows us to cast optimal control problems as optimization problems over elements $\{\Phi_x(k),\Phi_u(k)\}$, constrained to satisfy the affine equations~\eqref{eq:time-achievability}. Comparing equations \eqref{eq:impulse-response} and \eqref{eq:phis}, we see that the former is non-convex in the controller $K$, whereas the latter is affine in the elements $\{\Phi_x(k),\Phi_u(k)\}$.

As we work with infinite horizon problems, it is notationally more convenient to work with \emph{transfer function} representations of the above objects, which can be obtained by taking a $z$-transform of their time-domain representations. The frequency domain variable $z$ can be informally thought of as the time-shift operator, i.e., $z\{x_k,x_{k+1},\dots\} = \{x_{k+1},x_{k+2},\dots\}$, allowing for a compact representation of LTI dynamics. {We use boldface letters to denote such transfer functions signals in the frequency domain, e.g., $\tf{\Phi}_x(z) = \sum_{k = 1}^\infty\Phi_x(k) z^{-k}$. Then, the constraints \eqref{eq:time-achievability} can be rewritten as }
\begin{equation*}
\begin{bmatrix} zI - \trueA & - \trueB \end{bmatrix} \begin{bmatrix} \tf \Phi_x \\ \tf \Phi_u \end{bmatrix} = I \:,
\end{equation*}
{and the corresponding (not necessarily static) control law $\tf u = \tf K \tf x$ is given by $\tf K = \tf \Phi_u \tf \Phi^{-1}_x$.}
The relevant frequency domain connections for LQR are illustrated in Appendix~\ref{app:h2}.

We formalize our discussion by introducing notation that is common in the controls
literature. For a thorough introduction to the functional analysis
commonly used in control theory, see Chapters 2
and 3 of~\citet{ZDGBook}.
Let $\mathbb{T}$ (resp. $\mathbb{D}$) denote the unit circle (resp. open unit
disk) in the complex plane.
The restriction of the Hardy spaces $\hinf(\mathbb{T})$ and $\htwo(\mathbb{T})$ to matrix-valued
real-rational functions that are analytic on the complement of $\mathbb{D}$
will be referred to as $\mathcal{RH}_\infty$ and $\mathcal{RH}_2$,
respectively.
In controls parlance, this corresponds to (discrete-time) stable
matrix-valued transfer functions.
For these two function spaces, the $\hinf$ and $\htwo$ norms simplify to
\begin{align}
  \norm{\tf G}_{\hinf} = \sup_{z \in \mathbb{T}} \: \norm{G(z)}_2 \:, \:\:
  \norm{\tf G}_{\htwo} = \sqrt{ \frac{1}{2\pi} \int_{\mathbb{T}} \norm{G(z)}_F^2 \; dz } \:.
\end{align}
Finally, the notation $\frac{1}{z} \mathcal{RH}_\infty$ refers to the set
of transfer functions $\tf G$ such that $z \tf G \in \mathcal{RH}_\infty$.
Equivalently, $\tf G \in \frac{1}{z} \mathcal{RH}_\infty$
if $\tf G \in \mathcal{RH}_\infty$ and $\tf G$ is strictly proper.

The most important transfer function for the LQR problem is the map
from the state sequence to the control actions: the control policy.
Consider an arbitrary transfer function $\tf K$ denoting the map from state to control action, $\tf u = \tf K \tf x$. Then the closed-loop transfer matrices from the process noise $\tf w$ to the state $\tf x$ and control action $\tf u$ satisfy

\begin{equation}
\begin{bmatrix} \tf x \\ \tf u \end{bmatrix} = \begin{bmatrix} (zI - A-B\tf K)^{-1} \\ \tf K (zI-A-B \tf K)^{-1} \end{bmatrix} \tf w.
\label{eq:response}
\end{equation}
We then have the following theorem parameterizing the set of stable closed-loop transfer matrices, as described in equation \eqref{eq:response}, that are achievable by a given stabilizing controller $\tf K$.
\begin{theorem}[State-Feedback Parameterization~\cite{SysLevelSyn1}]
The following are true:
\begin{itemize}
\item The affine subspace defined by
\begin{equation}
\begin{bmatrix} zI - A & - B \end{bmatrix} \begin{bmatrix} \tf \Phi_x \\ \tf \Phi_u \end{bmatrix} = I, \ \tf \Phi_x, \tf \Phi_u \in \frac{1}{z}\mathcal{RH}_\infty
\label{eq:achievable}
\end{equation}
parameterizes all system responses \eqref{eq:response} from $\tf w$ to $(\tf x, \tf u)$, achievable by an internally stabilizing state-feedback controller $\tf K$.
\item For any transfer matrices $\{\tf \Phi_x, \tf \Phi_u\}$ satisfying \eqref{eq:achievable}, the controller $\tf K = \tf \Phi_u \tf \Phi_x^{-1}$ is internally stabilizing and achieves the desired system response \eqref{eq:response}.
\end{itemize}
\label{thm:param}
\end{theorem}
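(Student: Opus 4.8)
The plan is to establish the two bullets as the two directions of a bijection between internally stabilizing state-feedback controllers $\tf K$ and pairs $\{\tf\Phi_x,\tf\Phi_u\}$ satisfying \eqref{eq:achievable}.

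For the first bullet, I would start from an internally stabilizing $\tf K$ and its induced closed-loop maps \eqref{eq:response}, namely $\tf\Phi_x = (zI - A - B\tf K)^{-1}$ and $\tf\Phi_u = \tf K(zI - A - B\tf K)^{-1}$. A direct computation verifies the affine constraint,
\[
\begin{bmatrix} zI - A & -B\end{bmatrix}\begin{bmatrix}\tf\Phi_x \\ \tf\Phi_u\end{bmatrix} = [(zI - A) - B\tf K](zI - A - B\tf K)^{-1} = I.
\]
Internal stability of the interconnection is exactly the statement that the closed-loop maps $\tf\Phi_x$ and $\tf\Phi_u = \tf K\tf\Phi_x$ lie in $\mathcal{RH}_\infty$, while strict properness (the $\frac{1}{z}$ factor) follows from the resolvent structure of $(zI - A - B\tf K)^{-1}$, which behaves like $z^{-1}I$ as $z\to\infty$.

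For the second bullet, given a feasible pair I set $\tf K := \tf\Phi_u\tf\Phi_x^{-1}$; this is well defined as a rational matrix because the constraint forces $z\tf\Phi_x \to I$ as $z\to\infty$, so $\tf\Phi_x$ is invertible. Substituting $\tf\Phi_u = \tf K\tf\Phi_x$ into \eqref{eq:achievable} and rearranging gives $(zI - A - B\tf K)\tf\Phi_x = I$, whence $\tf\Phi_x = (zI - A - B\tf K)^{-1}$ and $\tf\Phi_u = \tf K(zI - A - B\tf K)^{-1}$; this shows $\tf K$ reproduces precisely the responses \eqref{eq:response}.

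The crux of the theorem, and the step I expect to be the main obstacle, is showing that this $\tf K$ is internally stabilizing even though $\tf K$ itself need not be stable. I would inject independent disturbances $\tf{d}_x$ into the state update and $\tf{d}_u$ at the plant input, i.e. $z\tf x = A\tf x + B\tf u + \tf{d}_x$ and $\tf u = \tf K\tf x + \tf{d}_u$, and solve the loop to obtain
\[
\begin{bmatrix}\tf x \\ \tf u\end{bmatrix} = \begin{bmatrix}\tf\Phi_x & \tf\Phi_x B \\ \tf\Phi_u & I + \tf\Phi_u B\end{bmatrix}\begin{bmatrix}\tf{d}_x \\ \tf{d}_u\end{bmatrix}.
\]
Every block is a sum or product of the stable maps $\tf\Phi_x,\tf\Phi_u \in \frac{1}{z}\mathcal{RH}_\infty$ with the constant matrix $B$, so all four disturbance-to-signal maps lie in $\mathcal{RH}_\infty$, which is exactly internal stability. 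The delicate points are (i) fixing the precise definition of internal stability as stability of all four such maps, (ii) checking well-posedness of the interconnection, and (iii) the observation that the potentially unstable factor $\tf\Phi_x^{-1}$ appearing in $\tf K$ cancels, leaving the closed loop dependent only on the stable quantities $\tf\Phi_x$ and $\tf\Phi_u$.
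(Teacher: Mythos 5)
Your proof is correct. Note that the paper offers no proof of its own for Theorem \ref{thm:param} --- it is imported verbatim from \cite{SysLevelSyn1} --- and your argument (verifying the affine constraint for the response of any internally stabilizing $\tf K$, then establishing internal stability of $\tf K = \tf \Phi_u \tf \Phi_x^{-1}$ by showing that all four disturbance-to-signal closed-loop maps are built from the stable objects $\tf \Phi_x$, $\tf \Phi_u$, and the constant matrix $B$) is essentially the standard proof given in that reference.
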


Note that in particular, $\{\tf \Phi_x, \tf \Phi_u\}=\{ (zI - A-B\tf K)^{-1} , \tf K (zI-A-B \tf K)^{-1} \}$ as in~\eqref{eq:response} are elements of the affine space defined by~\eqref{eq:achievable} whenever $\tf K$ is a causal stabilizing controller.

We will also make extensive use of a robust variant of Theorem \ref{thm:param}.
\begin{theorem}[Robust Stability~\cite{virtual}]
Suppose that the transfer matrices $\{\tf\Phi_x, \tf \Phi_u\} \in \frac{1}{z}\mathcal{RH}_\infty$ satisfy
\begin{equation}
\begin{bmatrix} zI - A & - B \end{bmatrix} \begin{bmatrix} \tf \Phi_x \\ \tf \Phi_u \end{bmatrix} = I + \tf \Delta.
\end{equation}
Then the controller $\tf K = \tf \Phi_u \tf \Phi_x^{-1}$ stabilizes the system described by $(A,B)$ if and only if $(I+\tf \Delta)^{-1} \in \mathcal{RH}_\infty$.  Furthermore, the resulting system response is given by
\begin{equation}
\begin{bmatrix} \tf x \\ \tf u \end{bmatrix} = \begin{bmatrix} \tf \Phi_x \\ \tf \Phi_u \end{bmatrix}(I+\tf \Delta)^{-1} \tf w.
\end{equation}
\label{thm:robust}
\end{theorem}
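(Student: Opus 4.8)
The plan is to reduce the whole statement to the exact parameterization of Theorem~\ref{thm:param} by showing that feeding $\tf K = \tf\Phi_u \tf\Phi_x^{-1}$ into the true plant $(A,B)$ produces closed-loop responses that are exactly the nominal ones right-multiplied by $(I+\tf\Delta)^{-1}$. First I would record the identity $\tf K \tf\Phi_x = \tf\Phi_u$, which is immediate from $\tf K = \tf\Phi_u\tf\Phi_x^{-1}$, and substitute it into the hypothesis
\[
\begin{bmatrix} zI - A & -B \end{bmatrix}\begin{bmatrix} \tf\Phi_x \\ \tf\Phi_u \end{bmatrix} = I + \tf\Delta
\]
to obtain $(zI - A - B\tf K)\tf\Phi_x = I + \tf\Delta$. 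Since $\tf\Phi_x,\tf\Phi_u \in \frac{1}{z}\mathcal{RH}_\infty$ are strictly proper, $\tf\Delta$ is strictly proper as well, so $I+\tf\Delta$ equals $I$ at infinity and is invertible as a proper rational transfer matrix. Right-multiplying by $(I+\tf\Delta)^{-1}$ then gives $(zI - A - B\tf K)^{-1} = \tf\Phi_x(I+\tf\Delta)^{-1}$, and therefore $\tf K(zI-A-B\tf K)^{-1} = \tf\Phi_u(I+\tf\Delta)^{-1}$, which is precisely the system response asserted in the ``Furthermore'' claim and matches the closed-loop form~\eqref{eq:response}.

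For the stability equivalence I would invoke Theorem~\ref{thm:param}, which characterizes internal stabilization of $(A,B)$ by $\tf K$ in terms of its true closed-loop responses lying in $\frac{1}{z}\mathcal{RH}_\infty$. The forward (``if'') direction is then immediate: if $(I+\tf\Delta)^{-1}\in\mathcal{RH}_\infty$, then because $\frac{1}{z}\mathcal{RH}_\infty$ is closed under right multiplication by an $\mathcal{RH}_\infty$ element (the product of a strictly proper stable map with a proper stable map is again strictly proper and stable), the responses $\tf\Phi_x(I+\tf\Delta)^{-1}$ and $\tf\Phi_u(I+\tf\Delta)^{-1}$ computed above both belong to $\frac{1}{z}\mathcal{RH}_\infty$. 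Theorem~\ref{thm:param} then certifies that $\tf K$ internally stabilizes $(A,B)$ and achieves exactly these responses.

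The reverse (``only if'') direction is where the real work lies, and it is the step I would treat most carefully. The subtlety is that stability of the input-output map $\tf\Phi_x(I+\tf\Delta)^{-1}$ from $\tf w$ to $\tf x$ does not by itself force $(I+\tf\Delta)^{-1}\in\mathcal{RH}_\infty$, since $\tf\Phi_x$ is not invertible within $\mathcal{RH}_\infty$ and could in principle mask unstable factors. To isolate $(I+\tf\Delta)^{-1}$ I would pass to the actual interconnection of the SLS controller realization with the true plant: implementing $\tf K = \tf\Phi_u\tf\Phi_x^{-1}$ in its SLS form introduces an internal disturbance-reconstruction signal $\boldsymbol\beta$ satisfying $\tf x = \tf\Phi_x\boldsymbol\beta$ and $\tf u = \tf\Phi_u\boldsymbol\beta$, and in the true (perturbed) loop this signal obeys $\boldsymbol\beta = (I+\tf\Delta)^{-1}\tf w$, consistent with the responses already derived. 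Internal stability requires \emph{every} internal signal, not merely the maps to $\tf x$ and $\tf u$, to depend stably on the injected disturbance $\tf w$; applied to $\boldsymbol\beta$ this forces $(I+\tf\Delta)^{-1}\in\mathcal{RH}_\infty$. The main obstacle is thus to make this internal-signal argument rigorous: exhibiting the explicit controller realization, forming the full plant-controller interconnection, and verifying that $(I+\tf\Delta)$ appears exactly as the factor governing all internal signals, so that well-posedness and internal stability of the loop are equivalent to $(I+\tf\Delta)^{-1}$ being stable.
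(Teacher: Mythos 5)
A preliminary remark on the comparison itself: the paper gives no proof of Theorem~\ref{thm:robust} --- it is imported from \cite{virtual}, and the only argument appearing in the surrounding text is the one-line small-gain proof of Corollary~\ref{coro:sufficient}. So your proposal can only be measured against the reference's proof, whose structure it in fact mirrors. Your ``if'' direction and the derivation of the response formula are correct and complete: right-multiplying $(zI-A-B\tf K)\tf\Phi_x = I + \tf\Delta$ by $(I+\tf\Delta)^{-1}$, checking that $\tf\Phi_x(I+\tf\Delta)^{-1}$ and $\tf\Phi_u(I+\tf\Delta)^{-1}$ lie in $\frac{1}{z}\mathcal{RH}_\infty$ and satisfy \eqref{eq:achievable}, and invoking the second part of Theorem~\ref{thm:param} (the $(I+\tf\Delta)^{-1}$ factors cancel when forming the controller, so the certified controller is $\tf K$ itself) is exactly the right argument. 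Your diagnosis of the ``only if'' direction is also correct, and it is more than a precaution: the equivalence genuinely fails if $\tf K$ is treated as a transfer function with, say, a minimal realization. Concretely, take $A=0$, $B=1$, $\tf\Phi_x = \frac{z-2}{z(z-0.5)}$, $\tf\Phi_u = \frac{z-2}{z(z-0.4)}$; the unstable zero at $z=2$ cancels in $\tf K = \tf\Phi_u\tf\Phi_x^{-1} = \frac{z-0.5}{z-0.4}$, which internally stabilizes the plant in the classical sense (closed-loop poles $0.7 \pm 0.1i$), yet $(I+\tf\Delta)^{-1} = \frac{z(z-0.5)(z-0.4)}{(z-2)(z^2-1.4z+0.5)}$ has a pole at $z=2$. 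So ``stabilizes'' in Theorem~\ref{thm:robust} must refer to the SLS implementation of $\tf K$, and the internal-signal argument you sketch is the only viable route, not merely one option.

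That said, two defects remain. The substantive one is that your converse is a plan rather than a proof: you assert $\boldsymbol{\beta} = (I+\tf\Delta)^{-1}\tf w$ and defer its derivation and the stability bookkeeping as ``the main obstacle,'' when in fact the computation is short and should be carried out. The SLS realization is $\tf u = z\tf\Phi_u\hat{\boldsymbol{\delta}}$, $\hat{\tf x} = (z\tf\Phi_x - I)\hat{\boldsymbol{\delta}}$, $\hat{\boldsymbol{\delta}} = \tf x - \hat{\tf x}$, so that $\tf x = z\tf\Phi_x\hat{\boldsymbol{\delta}}$; substituting into the plant equation $z\tf x = A\tf x + B\tf u + \tf w$ gives $z\left[(zI-A)\tf\Phi_x - B\tf\Phi_u\right]\hat{\boldsymbol{\delta}} = \tf w$, i.e.\ $z(I+\tf\Delta)\hat{\boldsymbol{\delta}} = \tf w$. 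Well-posedness of the interconnection makes $I+\tf\Delta$ invertible as a rational matrix, so $\boldsymbol{\beta} := z\hat{\boldsymbol{\delta}} = (I+\tf\Delta)^{-1}\tf w$, and internal stability --- which requires the map $\tf w \mapsto \hat{\boldsymbol{\delta}} = \frac{1}{z}(I+\tf\Delta)^{-1}\tf w$ to be stable, not only the maps to $\tf x$ and $\tf u$ --- is then equivalent to $(I+\tf\Delta)^{-1}\in\mathcal{RH}_\infty$. The minor defect is your claim that $\tf\Delta$ is strictly proper: it is not in general. From the hypothesis, $\tf\Delta(\infty) = \Phi_x(1) - I$, and nothing in the statement of Theorem~\ref{thm:robust} forces $\Phi_x(1) = I$ (that normalization holds for the exact parameterization \eqref{eq:achievable} and for the perturbed equation in Lemma~\ref{lem:sufficient}, but it is not assumed here). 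The claim is harmless where you use it, since invertibility of $I+\tf\Delta$ is part of the hypothesis in the ``if'' direction and follows from well-posedness in the ``only if'' direction, but the justification should be replaced accordingly.
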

\begin{coro}
Under the assumptions of Theorem \ref{thm:robust}, if $\|\tf \Delta \| <1$ for any induced norm $\|\cdot \|$, then the controller $\tf K = \tf \Phi_u \tf \Phi_x^{-1}$ stabilizes the system described by $(A,B)$.
\label{coro:sufficient}
\end{coro}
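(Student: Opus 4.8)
The plan is to reduce the corollary to Theorem~\ref{thm:robust} and then close the gap with a small-gain (Neumann series) argument. Theorem~\ref{thm:robust} already asserts that $\tf K = \tf \Phi_u \tf \Phi_x^{-1}$ stabilizes $(A,B)$ \emph{if and only if} $(I + \tf \Delta)^{-1} \in \mathcal{RH}_\infty$, so the entire content of the corollary is to verify this membership under the weaker, norm-based hypothesis $\|\tf \Delta\| < 1$. Thus I only need to show that $I + \tf \Delta$ is invertible \emph{within the algebra} $\mathcal{RH}_\infty$.

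First I would record the two structural facts the argument rests on. The perturbation itself is stable, $\tf \Delta \in \mathcal{RH}_\infty$: since $\{\tf \Phi_x, \tf \Phi_u\} \in \tfrac{1}{z}\mathcal{RH}_\infty$ and $\tf \Delta = \begin{bmatrix} zI - A & -B \end{bmatrix}\begin{bmatrix} \tf \Phi_x \\ \tf \Phi_u \end{bmatrix} - I$, the right-hand side is a finite sum of products of elements of $\mathcal{RH}_\infty$ (using that $z\tf \Phi_x \in \mathcal{RH}_\infty$), hence lies in $\mathcal{RH}_\infty$. Second, $\mathcal{RH}_\infty$ is a Banach algebra under the $\hinf$ norm. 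The natural candidate inverse is then the Neumann series $\sum_{k=0}^\infty (-\tf \Delta)^k$. Because any induced norm is submultiplicative with $\|I\| = 1$, I would bound $\|(-\tf \Delta)^k\| \leq \|\tf \Delta\|^k$, and since $\|\tf \Delta\| < 1$ this is a convergent geometric series, so the partial sums $S_n := \sum_{k=0}^n (-\tf \Delta)^k$ form a Cauchy sequence. The telescoping identity $(I + \tf \Delta) S_n = I - (-\tf \Delta)^{n+1}$ shows that any limit $S$ is a genuine two-sided inverse of $I + \tf \Delta$.

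The one point needing care --- and the step I expect to be the main, if mild, obstacle --- is to confirm that the limit $S$ lies in $\mathcal{RH}_\infty$ itself, rather than being merely a bounded operator in whatever signal-space topology the chosen induced norm comes from. Each $S_n$ is in $\mathcal{RH}_\infty$ by the algebra property, so it is enough to know the series converges in a norm under which $\mathcal{RH}_\infty$ is complete. I would handle this either by specializing to the $\hinf$ norm, for which completeness is precisely the Banach-algebra statement and the conclusion is immediate, or, for a general induced norm, by invoking the standard fact that the invertible elements of the Banach algebra $\mathcal{RH}_\infty$ form an open set containing every $I + \tf \Delta$ with $\|\tf \Delta\| < 1$. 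Either route gives $(I + \tf \Delta)^{-1} \in \mathcal{RH}_\infty$, and the corollary follows immediately from Theorem~\ref{thm:robust}.
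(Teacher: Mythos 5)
Your reduction is the right one, and your argument is in substance the proof of the very result the paper invokes: the paper's entire proof of this corollary is a citation of the small gain theorem (Section 9.2 of \cite{ZDGBook}), and the Neumann-series argument you outline is the standard proof of that theorem in the LTI setting. Your preliminary observations are also correct: $\tf \Delta \in \mathcal{RH}_\infty$ because $z\tf\Phi_x$, $A\tf\Phi_x$, and $B\tf\Phi_u$ all lie in $\mathcal{RH}_\infty$, and the geometric-series bound together with the telescoping identity does produce a two-sided inverse.

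The flaw sits exactly in the step you flagged as the main obstacle, and it is not merely cosmetic: you assert that $\mathcal{RH}_\infty$ is a Banach algebra under the $\hinf$ norm, and \emph{both} of your proposed ways of concluding $(I+\tf\Delta)^{-1} \in \mathcal{RH}_\infty$ (completeness of the algebra, and openness of its group of invertibles) rest on that assertion. It is false: $\mathcal{RH}_\infty$ consists of \emph{real-rational} functions and is not complete in the $\hinf$ norm, since uniform limits of rational functions need not be rational (truncations of $e^{1/z}$ are a standard counterexample); the Banach algebra here is $\mathcal{H}_\infty$, in which $\mathcal{RH}_\infty$ sits as a non-closed subalgebra. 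So your series only delivers $(I+\tf\Delta)^{-1} \in \mathcal{H}_\infty$. The repair is short but is a genuinely different argument: rationality of the inverse comes from Cramer's rule, not from the series. Concretely, since $\tf\Delta$ is rational and proper, $(I+\tf\Delta)^{-1}$ is a rational matrix function; moreover, by the maximum modulus principle $\sup_{|z|\geq 1}\twonorm{\tf\Delta(z)} = \hinfnorm{\tf\Delta} < 1$, so $I+\tf\Delta(z)$ is invertible at every point of $\{|z|\geq 1\}$ (including $z=\infty$) with $\twonorm{(I+\tf\Delta(z))^{-1}} \leq (1-\hinfnorm{\tf\Delta})^{-1}$, and hence $(I+\tf\Delta)^{-1}$ is proper, stable, and in $\mathcal{RH}_\infty$, so Theorem \ref{thm:robust} applies. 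For an induced norm other than the $\ell_2$-induced $\hinf$ norm, the analogous patch is to run your Neumann series in the complete algebra of bounded causal LTI operators on the corresponding signal space; the limit's transfer function is still the rational function $(I+\tf\Delta)^{-1}$, and a rational transfer function that realizes a bounded causal operator must have all its poles in the open unit disk, which again gives membership in $\mathcal{RH}_\infty$.
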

\begin{proof}
Follows immediately from the small gain theorem, see for example Section 9.2 in \cite{ZDGBook}.
\end{proof}

\subsection{Robust LQR Synthesis}
We return to the problem setting where estimates $(\Ah, \Bh)$ of a true system $(A,B)$ satisfy
\[\|\Delta_A\|_2\leq\epsilon_A,~~\|\Delta_B\|_2\leq\epsilon_B\]
where $\Delta_A := \Ah-A$ and $\Delta_B := \Bh-B$ and where we wish to minimize the LQR cost for the worst instantiation of the parametric uncertainty.

Before proceeding, we must formulate the LQR problem in terms of the system responses $\{\Phi_x(k),\Phi_u(k)\}$. It follows from Theorem \ref{thm:param} and the standard equivalence between infinite horizon LQR and $\htwo$ optimal control that, for a disturbance process distributed as $w_t \overset{i.i.d.}{\sim{}} \mathcal{N}(0,\sigma_w^2 I)$, the standard LQR problem \eqref{eq:lqr-classic} can be equivalently written as
\begin{equation}
\min_{\tf\Phi_x, \tf\Phi_u} \sigma_w^2 \left\|\begin{bmatrix} Q^\frac{1}{2} & 0 \\ 0 & R^\frac{1}{2}\end{bmatrix}\begin{bmatrix} \tf \Phi_x \\ \tf \Phi_u \end{bmatrix}\right\|_{\htwo}^2 \text{ s.t. equation \eqref{eq:achievable}}.
\label{eq:lqr2}
\end{equation}
We provide a full derivation of this equivalence in Appendix~\ref{app:h2}.  Going forward, we drop the $\sigma_w^2$ multiplier in the objective function as it affects neither the optimal controller nor the sub-optimality guarantees that we compute in Section \ref{sec:subopt}.

We begin with a simple sufficient condition under which any controller $\tf K$ that stabilizes $(\Ah,\Bh)$ also stabilizes the true system $(A,B)$.  To state the lemma, we introduce one additional piece of notation.  For a matrix $M$, we let $\Res{M}$ denote the resolvent
\begin{equation}\label{eq:phi-def}
\Res{M} := (zI - M)^{-1}\,.
\end{equation}
We now can state our robustness lemma.

\begin{lemma}\label{lemma:robust-sls}
Let the controller $\tf K$ stabilize $(\Ah, \Bh)$ and $(\tf\Phi_x,\tf\Phi_u)$ be its corresponding system response \eqref{eq:response} on system $(\Ah,\Bh)$.  Then if $\tf K$ stabilizes $(A,B)$, it achieves the following LQR cost
\begin{equation}
 J(A,B,\tf K) := \left\|\begin{bmatrix} Q^\frac{1}{2} & 0 \\ 0 & R^\frac{1}{2}\end{bmatrix}\begin{bmatrix} \tf\Phi_x \\  \tf\Phi_u \end{bmatrix}\left(I+\begin{bmatrix}\Delta_A& \Delta_B\end{bmatrix}\begin{bmatrix} \tf\Phi_x \\  \tf\Phi_u \end{bmatrix}\right)^{-1}\right\|_{\htwo}\:.
\end{equation}

Furthermore, letting
\begin{equation}\label{eq:deltahat}
\tf{\hat\Delta} := \begin{bmatrix}\Delta_A& \Delta_B\end{bmatrix}\begin{bmatrix} \tf\Phi_x \\  \tf\Phi_u \end{bmatrix} = (\Delta_A + \Delta_B \tf K)\Res{\Ah+\Bh\tf K} \:.
\end{equation}
a sufficient condition for $\tf K$ to stabilize $(A,B)$ is that $\hinfnorm{\tf{\hat{\Delta}}} <1$.
\label{lem:sufficient}
\end{lemma}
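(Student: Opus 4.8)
The plan is to reduce the entire statement to a direct application of the Robust Stability theorem (Theorem~\ref{thm:robust}) together with its small-gain corollary (Corollary~\ref{coro:sufficient}). The only genuine content is a single algebraic identity: it exhibits $\tf{\hat\Delta}$ as precisely the residual that appears when the system response $(\tf\Phi_x,\tf\Phi_u)$ designed for the nominal pair $(\Ah,\Bh)$ is substituted into the achievability equation of the true pair $(A,B)$. Once that identity is established, both the cost formula and the sufficient stabilization condition follow by quoting the already-available theorems.

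First I would record what it means for $\tf K$ to stabilize $(\Ah,\Bh)$: by Theorem~\ref{thm:param} applied to the nominal system, its system response $(\tf\Phi_x,\tf\Phi_u)$ lies in $\frac{1}{z}\mathcal{RH}_\infty$ and satisfies the achievability constraint $\begin{bmatrix} zI - \Ah & -\Bh \end{bmatrix}\begin{bmatrix} \tf\Phi_x \\ \tf\Phi_u \end{bmatrix} = I$. Then I would substitute $A = \Ah - \Delta_A$ and $B = \Bh - \Delta_B$ (recall $\Delta_A = \Ah - A$ and $\Delta_B = \Bh - B$) and compute the residual on the true system:
\begin{align*}
\begin{bmatrix} zI - A & -B \end{bmatrix}\begin{bmatrix} \tf\Phi_x \\ \tf\Phi_u \end{bmatrix}
&= \begin{bmatrix} zI - \Ah & -\Bh \end{bmatrix}\begin{bmatrix} \tf\Phi_x \\ \tf\Phi_u \end{bmatrix} + \begin{bmatrix} \Delta_A & \Delta_B \end{bmatrix}\begin{bmatrix} \tf\Phi_x \\ \tf\Phi_u \end{bmatrix} \\
&= I + \begin{bmatrix} \Delta_A & \Delta_B \end{bmatrix}\begin{bmatrix} \tf\Phi_x \\ \tf\Phi_u \end{bmatrix} = I + \tf{\hat\Delta} \:.
\end{align*}
This places us exactly in the hypothesis of Theorem~\ref{thm:robust} with the perturbation $\tf\Delta = \tf{\hat\Delta}$.

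With the identity in hand, everything follows mechanically. Since $\tf K$ stabilizes $(A,B)$ by assumption, Theorem~\ref{thm:robust} guarantees $(I+\tf{\hat\Delta})^{-1}\in\mathcal{RH}_\infty$ and gives the closed-loop response from $\tf w$ to $(\tf x,\tf u)$ on the true system as $\begin{bmatrix} \tf\Phi_x \\ \tf\Phi_u \end{bmatrix}(I+\tf{\hat\Delta})^{-1}$; substituting this response into the $\htwo$ reformulation of the LQR cost in \eqref{eq:lqr2} yields the claimed expression for $J(A,B,\tf K)$. For the sufficient condition I would invoke Corollary~\ref{coro:sufficient}, noting that the $\hinf$ norm is an induced norm, so $\hinfnorm{\tf{\hat\Delta}} < 1$ forces $\tf K$ to stabilize $(A,B)$. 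The alternative form $\tf{\hat\Delta} = (\Delta_A + \Delta_B\tf K)\Res{\Ah+\Bh\tf K}$ then comes from the state-feedback structure $\tf\Phi_u = \tf K\tf\Phi_x$ (since $\tf K = \tf\Phi_u\tf\Phi_x^{-1}$) and $\tf\Phi_x = \Res{\Ah+\Bh\tf K}$, giving $\begin{bmatrix}\Delta_A & \Delta_B\end{bmatrix}\begin{bmatrix}\tf\Phi_x \\ \tf\Phi_u\end{bmatrix} = (\Delta_A + \Delta_B\tf K)\tf\Phi_x$.

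I do not expect a serious obstacle: the argument is essentially bookkeeping on top of Theorems~\ref{thm:param} and~\ref{thm:robust}. The one place that demands care is the sign convention in the residual identity --- keeping $A = \Ah - \Delta_A$ rather than $\Ah + \Delta_A$ so that the residual is exactly $I + \tf{\hat\Delta}$ and not $I - \tf{\hat\Delta}$ --- and verifying that the nominal response $(\tf\Phi_x,\tf\Phi_u)$ really does satisfy the membership $\tf\Phi_x,\tf\Phi_u\in\frac{1}{z}\mathcal{RH}_\infty$ needed to invoke Theorem~\ref{thm:robust}. Both points are dispatched by the opening application of Theorem~\ref{thm:param} to the nominal system.
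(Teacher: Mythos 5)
Your proposal is correct and takes essentially the same approach as the paper: the paper's one-line proof consists precisely of the residual identity you derive, namely that the nominal constraint $\begin{bmatrix} zI - \Ah & -\Bh \end{bmatrix}\begin{bmatrix} \tf\Phi_x \\ \tf\Phi_u \end{bmatrix} = I$ implies $\begin{bmatrix} zI - A & -B \end{bmatrix}\begin{bmatrix} \tf\Phi_x \\ \tf\Phi_u \end{bmatrix} = I + \tf{\hat\Delta}$, followed by invoking Theorems~\ref{thm:param} and~\ref{thm:robust} and Corollary~\ref{coro:sufficient} exactly as you do. Your write-up simply makes explicit the sign bookkeeping ($\Delta_A = \Ah - A$, $\Delta_B = \Bh - B$) and the $\frac{1}{z}\mathcal{RH}_\infty$ membership that the paper's proof treats as immediate.
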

\begin{proof}
Follows immediately from Theorems \ref{thm:param}, \ref{thm:robust} and Corollary \ref{coro:sufficient} by noting that for system responses $(\tf \Phi_x, \tf \Phi_u)$ satisfying
\[
\begin{bmatrix} zI - \Ah & - \Bh \end{bmatrix} \begin{bmatrix} \tf \Phi_x \\ \tf \Phi_u \end{bmatrix} = I,
\]
it holds that
\[
\begin{bmatrix} zI - A & - B \end{bmatrix} \begin{bmatrix} \tf \Phi_x \\ \tf \Phi_u \end{bmatrix} = I + \Dh
\]
for $\Dh$ as defined in equation \eqref{eq:deltahat}.
\end{proof}
We can therefore recast the robust LQR problem \eqref{eq:robust_lqr} in the following equivalent form
\begin{align}\label{eq:robustLQR}
\begin{split}
 &\min_{\tf\Phi_x, \tf \Phi_u} \sup\limits_{\substack{\|\Delta_A\|_2\leq \epsilon_A \\ \|\Delta_B\|_2\leq \epsilon_B}}  J(A,B,\tf K)\\
& \text{s.t.} \begin{bmatrix}zI-\Ah&-\Bh\end{bmatrix}\begin{bmatrix} \tf\Phi_x \\  \tf\Phi_u \end{bmatrix} = I,~~\tf\Phi_x, \tf \Phi_u  \in\frac{1}{z}\mathcal{RH}_\infty \:.
 \end{split}
\end{align}
The resulting robust control problem is one subject to real-parametric
uncertainty, a class of problems known to be computationally
intractable~\cite{braatz94}.  Although effective computational heuristics
(e.g., DK iteration \cite{ZDGBook}) exist, the performance of the resulting controller on the
true system is difficult to characterize analytically in terms of the size of
the perturbations.

To circumvent this issue, we take a slightly conservative approach and find an upper-bound to the cost $J(A,B,\tf K)$ that is independent of the uncertainties $\Delta_A$ and $\Delta_B$. First, note that if $\hinfnorm{\Dh} < 1$, we can write
\begin{align} \label{eq:upperbnd1}
J(A,B,\tf K) \leq \hinfnorm{(I+\Dh)^{-1}}J(\Ah,\Bh,\tf K) \leq \frac{1}{1-\hinfnorm{\Dh}}J(\Ah,\Bh,\tf K).
\end{align}

Because $J(\Ah,\Bh,\tf K)$ captures the performance of the controller $\tf K$ on the nominal system $(\Ah,\Bh)$, it is not subject to any uncertainty.  It therefore remains to compute a tractable bound for $\hinfnorm{\Dh}$, which we do using the following fact.
\begin{proposition}
	For any $\alpha \in (0,1)$ and $\tf{\hat{\Delta}}$ as defined in \eqref{eq:deltahat}
\begin{equation}\label{eq:ben-tri-bound}
	\|  \tf{\hat{\Delta}} \|_{\hinf}
	 \leq \left\|\begin{bmatrix} \tfrac{\epsilon_A}{\sqrt{\alpha}} \tf \Phi_x \\ \tfrac{\epsilon_B}{\sqrt{1-\alpha}} \tf\Phi_u \end{bmatrix} \right\|_{\hinf} =\colon H_\alpha(\tf\Phi_x,\tf\Phi_u) \:.
\end{equation}
\label{prop:bound}
\end{proposition}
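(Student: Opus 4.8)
The plan is to reduce the claimed $\hinf$ bound to a pointwise (in frequency) spectral-norm estimate, since for stable transfer matrices the norm simplifies to $\hinfnorm{\tf G} = \sup_{z \in \mathbb{T}} \spectralnorm{G(z)}$. Expanding the block product in \eqref{eq:deltahat}, at each frequency the value of $\Dh$ is $\Dh(z) = \Delta_A \Phi_x(z) + \Delta_B \Phi_u(z)$, so it suffices to bound $\spectralnorm{\Delta_A \Phi_x(z) + \Delta_B \Phi_u(z)}$ uniformly over $z \in \mathbb{T}$ by the spectral norm of the stacked matrix $\begin{bmatrix} \tfrac{\epsilon_A}{\sqrt{\alpha}}\Phi_x(z) \\ \tfrac{\epsilon_B}{\sqrt{1-\alpha}}\Phi_u(z)\end{bmatrix}$, and then pass to suprema.

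First I would fix a frequency $z \in \mathbb{T}$ and a unit vector $v$, and set $a := \Phi_x(z)v$ and $b := \Phi_u(z)v$. The triangle inequality, submultiplicativity of the spectral norm, and the perturbation bounds $\spectralnorm{\Delta_A} \leq \epsilon_A$, $\spectralnorm{\Delta_B} \leq \epsilon_B$ then give $\twonorm{\Dh(z) v} \leq \epsilon_A \twonorm{a} + \epsilon_B \twonorm{b}$. Importantly, only the norm bounds on $\Delta_A, \Delta_B$ are used here, so the resulting estimate holds simultaneously for every admissible perturbation — exactly what is needed to pass to the worst case in \eqref{eq:robustLQR}.

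The key step, and the one that introduces the free parameter $\alpha$, is to split unity as $\alpha + (1-\alpha) = 1$ and apply Cauchy--Schwarz to the two-term sum. Writing $\epsilon_A \twonorm{a} = \sqrt{\alpha}\cdot \tfrac{\epsilon_A}{\sqrt{\alpha}}\twonorm{a}$ and $\epsilon_B \twonorm{b} = \sqrt{1-\alpha}\cdot\tfrac{\epsilon_B}{\sqrt{1-\alpha}}\twonorm{b}$, I obtain
\begin{equation*}
\epsilon_A \twonorm{a} + \epsilon_B \twonorm{b} \leq \sqrt{\alpha + (1-\alpha)}\,\sqrt{\tfrac{\epsilon_A^2}{\alpha}\twonorm{a}^2 + \tfrac{\epsilon_B^2}{1-\alpha}\twonorm{b}^2} = \left\| \begin{bmatrix} \tfrac{\epsilon_A}{\sqrt{\alpha}}\Phi_x(z) \\ \tfrac{\epsilon_B}{\sqrt{1-\alpha}}\Phi_u(z) \end{bmatrix} v \right\|_2 ,
\end{equation*}
where the last equality is just the fact that the squared norm of a stacked vector is the sum of the squared norms of its blocks. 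Taking the supremum over unit $v$ turns the right-hand side into the spectral norm of the stacked matrix at $z$ (and the left-hand side into $\spectralnorm{\Dh(z)}$), and then taking the supremum over $z \in \mathbb{T}$ yields $\hinfnorm{\Dh} \leq H_\alpha(\tf\Phi_x,\tf\Phi_u)$.

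I do not anticipate a genuine obstacle, as the argument is elementary once the reduction to a pointwise bound is made. The only delicate point is the Cauchy--Schwarz weighting: it must be applied with precisely the $\alpha/(1-\alpha)$ split so that the induced weights $\epsilon_A/\sqrt{\alpha}$ and $\epsilon_B/\sqrt{1-\alpha}$ match the scaling inside the stacked transfer matrix; any other grouping would either loosen the bound or fail to collapse into the clean stacked-norm form. One should also note that $\tf\Phi_x, \tf\Phi_u \in \tfrac{1}{z}\mathcal{RH}_\infty$ guarantees the relevant suprema over $\mathbb{T}$ are finite and the $\hinf$ norms are well defined.
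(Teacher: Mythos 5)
Your proof is correct. It shares the paper's key idea --- the weighted split $\epsilon_A\twonorm{a} + \epsilon_B\twonorm{b} = \sqrt{\alpha}\cdot\tfrac{\epsilon_A}{\sqrt{\alpha}}\twonorm{a} + \sqrt{1-\alpha}\cdot\tfrac{\epsilon_B}{\sqrt{1-\alpha}}\twonorm{b}$ --- but executes it by a different route. The paper stays at the operator level: it factors $\Dh$ as the product of the constant row block $\begin{bmatrix} \frac{\sqrt{\alpha}}{\epsilon_A}\Delta_A & \frac{\sqrt{1-\alpha}}{\epsilon_B}\Delta_B \end{bmatrix}$ with the rescaled stacked transfer function, invokes submultiplicativity of the $\hinf$ norm against a constant matrix, and then bounds the row block's spectral norm by one using the lemma $\bignorm{\begin{bmatrix} M_1 & M_2\end{bmatrix}}_2 \leq \left(\twonorm{M_1}^2 + \twonorm{M_2}^2\right)^{1/2}$, itself proved via subadditivity of $\lambda_{\max}$. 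You instead work pointwise in frequency and on unit vectors, replacing those two matrix-level facts by the triangle inequality and scalar Cauchy--Schwarz; in effect you have unrolled the paper's factorization into an elementary vector computation, since your Cauchy--Schwarz step is precisely a proof that the rescaled row block acts as a contraction on the stacked vector $(\tfrac{\epsilon_A}{\sqrt{\alpha}}a, \tfrac{\epsilon_B}{\sqrt{1-\alpha}}b)$. Neither argument is more general than the other here; yours is more self-contained and makes the role of $\alpha$ transparent as a Cauchy--Schwarz weight, while the paper's is more compact and modular, isolating a reusable block-norm fact and avoiding explicit frequency-domain and unit-vector bookkeeping. Your closing observations are also sound: only the norm bounds on $\Delta_A, \Delta_B$ are used, so the estimate holds uniformly over the uncertainty set, and $\tf\Phi_x, \tf\Phi_u \in \tfrac{1}{z}\mathcal{RH}_\infty$ ensures the suprema over $\mathbb{T}$ are finite.
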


\begin{proof}
Note that for any block matrix of the form $\begin{bmatrix} M_1 & M_2 \end{bmatrix}$, we have
\begin{equation}\label{eq:block-norm-bound}
	\left\|\begin{bmatrix} M_1 & M_2 \end{bmatrix}\right\|_2
\leq	\left(\left\| M_1 \right\|_2^2 + \left\| M_2 \right\|_2^2\right)^{1/2}\,.
\end{equation}
To verify this assertion, note that
\[
\left\|\begin{bmatrix} M_1 & M_2 \end{bmatrix}\right\|_2^2
= \lambda_{\mathrm{max}}(M_1 M_1^*+ M_2 M_2^*)
\leq \lambda_{\mathrm{max}}(M_1 M_1^*)+  \lambda_{\mathrm{max}}(M_2 M_2^*)
= \left\| M_1 \right\|_2^2 + \left\| M_2 \right\|_2^2\,.
\]

With~\eqref{eq:block-norm-bound} in hand, we have
\begin{align*} \left\|  \begin{bmatrix} \Delta_A & \Delta_B \end{bmatrix} \begin{bmatrix} \tf \Phi_x \\ \tf \Phi_u \end{bmatrix} \right\|_{\hinf}
&=\left\| \begin{bmatrix} \frac{\sqrt{\alpha}}{\epsilon_A}\Delta_A & \frac{\sqrt{1-\alpha}}{\epsilon_B}\Delta_B\ \end{bmatrix} \begin{bmatrix} \frac{\epsilon_A}{\sqrt{\alpha}} \tf\Phi_x \\ \frac{\epsilon_B}{\sqrt{1-\alpha}}\tf\Phi_u \end{bmatrix} \right\|_{\hinf} \\
&\leq \left\|  \begin{bmatrix} \frac{\sqrt{\alpha}}{\epsilon_A}\Delta_A & \frac{\sqrt{1-\alpha}}{\epsilon_B}\Delta_B\ \end{bmatrix}\right\|_2
\left\| \begin{bmatrix} \frac{\epsilon_A}{\sqrt{\alpha}} \tf\Phi_x \\ \frac{\epsilon_B}{\sqrt{1-\alpha}}\tf\Phi_u \end{bmatrix} \right\|_{\hinf}\leq
 \left\| \begin{bmatrix} \frac{\epsilon_A}{\sqrt{\alpha}} \tf\Phi_x \\ \frac{\epsilon_B}{\sqrt{1-\alpha}}\tf\Phi_u \end{bmatrix} \right\|_{\hinf} ,
\end{align*}
completing the proof.
 \end{proof}

The following corollary is then immediate.

\begin{coro}
Let the controller $\tf K$ and resulting system response $(\tf \Phi_x, \tf \Phi_u)$ be as defined in Lemma \ref{lem:sufficient}.  Then if $H_\alpha(\tf \Phi_x, \tf \Phi_u) < 1$, the controller $\tf K = \tf\Phi_u \tf\Phi_x^{-1}$ stabilizes the true system $(A,B)$.
\label{coro:stable}
\end{coro}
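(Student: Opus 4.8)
The plan is to recognize that this corollary is an immediate consequence of the two results that precede it, chained together; no new machinery is required. The two ingredients are Proposition~\ref{prop:bound}, which furnishes the uncertainty-free upper bound $\hinfnorm{\Dh} \leq H_\alpha(\tf\Phi_x, \tf\Phi_u)$ valid for every $\alpha \in (0,1)$, and the sufficient condition for stabilization recorded in Lemma~\ref{lem:sufficient}, namely that $\hinfnorm{\Dh} < 1$ guarantees that $\tf K = \tf\Phi_u \tf\Phi_x^{-1}$ stabilizes the true plant $(A,B)$.

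Concretely, I would first note that the hypotheses of both earlier results are exactly the standing assumptions of the corollary: $\tf K$ stabilizes $(\Ah,\Bh)$ and $(\tf\Phi_x, \tf\Phi_u)$ is its associated system response, so that $\Dh$ is well defined via~\eqref{eq:deltahat}. Invoking Proposition~\ref{prop:bound} then gives $\hinfnorm{\Dh} \leq H_\alpha(\tf\Phi_x,\tf\Phi_u)$, and under the hypothesis $H_\alpha(\tf\Phi_x,\tf\Phi_u) < 1$ this yields the strict inequality $\hinfnorm{\Dh} < 1$. Feeding this into the sufficient condition of Lemma~\ref{lem:sufficient} produces the claim that $\tf K$ stabilizes $(A,B)$, which completes the argument.

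There is essentially no obstacle here: all of the substantive work has already been carried out upstream, including the identity $\begin{bmatrix} zI - A & -B \end{bmatrix}\begin{bmatrix} \tf\Phi_x \\ \tf\Phi_u \end{bmatrix} = I + \Dh$ and the equivalence between invertibility of $I+\Dh$ in $\mathcal{RH}_\infty$ and stabilization (Theorem~\ref{thm:robust}), together with the small-gain reduction in Corollary~\ref{coro:sufficient}. If one wanted a fully self-contained derivation, the only thing to unwind would be the chain Proposition~\ref{prop:bound} $\Rightarrow$ Lemma~\ref{lem:sufficient} $\Rightarrow$ Corollary~\ref{coro:sufficient} back to the small-gain theorem. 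A minor point worth flagging is that the bound holds simultaneously for every $\alpha \in (0,1)$, so in applications one is free to optimize over $\alpha$ to make the hypothesis $H_\alpha(\tf\Phi_x,\tf\Phi_u) < 1$ as easy as possible to satisfy; the corollary only requires its validity for some single admissible $\alpha$.
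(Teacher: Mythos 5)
Your proposal is correct and matches the paper's own reasoning exactly: the paper states the corollary is ``immediate'' from Proposition~\ref{prop:bound} (which gives $\hinfnorm{\Dh} \leq H_\alpha(\tf\Phi_x,\tf\Phi_u)$) combined with the sufficient condition $\hinfnorm{\Dh} < 1$ from Lemma~\ref{lem:sufficient}, which is precisely the chain you spell out. Your closing remark about optimizing over $\alpha$ is a reasonable observation but not needed for the proof itself.
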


Applying Proposition \ref{prop:bound} in conjunction with the bound \eqref{eq:upperbnd1}, we arrive at the following upper bound to the cost function of the robust LQR problem \eqref{eq:robust_lqr}, which is independent of the perturbations $(\Delta_A,\Delta_B)$:
\begin{align}\label{eq:upperbnd}
 \sup\limits_{\substack{\|\Delta_A\|_2\leq \epsilon_A \\ \|\Delta_B\|_2\leq \epsilon_B}}  J(A,B,\tf K)  &\leq \left\|\begin{bmatrix} Q^\frac{1}{2} & 0 \\ 0 & R^\frac{1}{2}\end{bmatrix}\begin{bmatrix} \tf\Phi_x \\  \tf\Phi_u \end{bmatrix}\right\|_{\htwo}\frac{1}{1 - H_\alpha(\tf\Phi_x,\tf\Phi_u)} = \frac{J(\Ah,\Bh,\tf K)}{1 - H_\alpha(\tf\Phi_x,\tf\Phi_u)}\:.
\end{align}
The upper bound is only valid when $H_\alpha(\tf \Phi_x, \tf \Phi_u) < 1$, which guarantees the stability of the closed-loop system as in Corollary~\ref{coro:stable}.
We remark that Corollary~\ref{coro:stable} and the bound in \eqref{eq:upperbnd} are of interest independent of the synthesis procedure for $\tf K$. In particular, they can be applied to the optimal LQR controller $\Kh$ computed using the nominal system $(\Ah,\Bh)$.

As the next lemma shows, the right hand side of Equation~\eqref{eq:upperbnd} can be efficiently optimized by an appropriate decomposition.  The proof of the lemma is immediate.

\begin{lemma} \label{lem:innerouter}
For functions $f:\mathcal{X}\to\mathbb{R}$ and $g:\mathcal{X}\to\mathbb{R}$ and constraint set $C\subseteq \mathcal{X}$, consider
\begin{align*}
\min_{x\in C} \frac{f(x)}{1-g(x)} \:.
\end{align*}
Assuming that $f(x) \geq 0$ and $0 \leq g(x) < 1$ for all $x\in C$, this optimization problem can be reformulated as an outer single-variable problem and an inner constrained optimization problem (the objective value of an optimization over the emptyset is defined to be infinity):
\begin{align*}
\min_{x\in C} \frac{f(x)}{1-g(x)} = \min_{\gamma\in[0,1)} \tfrac{1}{1-\gamma} \min_{x\in C} \{ f(x) ~|~ g(x)\leq\gamma\}
\end{align*}
\end{lemma}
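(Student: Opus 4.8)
The plan is to establish the claimed identity by proving the two inequalities separately, since both sides are minimizations and the reformulation merely reparametrizes the feasible region of $C$ by the level sets of $g$. Throughout I would use the hypotheses $f(x)\ge 0$ and $0\le g(x)<1$ on $C$ in an essential way: together they guarantee that $\gamma\mapsto \tfrac{1}{1-\gamma}$ is positive and nondecreasing on the relevant range, so that replacing $g(x)$ by an upper bound $\gamma$ can only increase the objective. This monotonicity is the only structural fact the argument needs.

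First I would show that the right-hand side is at most the left-hand side. Given any $x\in C$, the choice $\gamma := g(x)$ is feasible for the outer problem (since $g(x)\in[0,1)$) and makes $x$ feasible for the inner problem (since $g(x)\le\gamma$ holds trivially). Hence the inner minimum at this $\gamma$ is at most $f(x)$, and multiplying by $\tfrac{1}{1-\gamma}=\tfrac{1}{1-g(x)}$ shows the outer objective at $\gamma=g(x)$ is at most $\tfrac{f(x)}{1-g(x)}$. Taking the minimum over the outer variable, and then over $x\in C$, yields $\mathrm{RHS}\le\mathrm{LHS}$.

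For the reverse inequality I would fix an arbitrary $\gamma\in[0,1)$ and any $x\in C$ that is feasible for the inner problem, i.e.\ with $g(x)\le\gamma$. Then $1-g(x)\ge 1-\gamma>0$, and since $f(x)\ge 0$ this gives $\tfrac{f(x)}{1-g(x)}\le\tfrac{f(x)}{1-\gamma}$, while the left side is at least the overall left-hand-side minimum. Minimizing the resulting bound over all inner-feasible $x$, and then over $\gamma$, gives $\mathrm{LHS}\le\mathrm{RHS}$. Combining the two directions proves the equality.

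The only point requiring care, and the reason the infinity convention is stated, is the empty-feasible-set case: for values of $\gamma$ too small for any $x\in C$ to satisfy $g(x)\le\gamma$, the inner minimum is $+\infty$ and the corresponding outer term is vacuous. I expect this to be the only real obstacle, and it is a mild one. It is harmless for the reverse direction, since an infinite term never lowers a minimum, and the forward direction only ever uses the nonempty choice $\gamma=g(x)$, so no feasible $x$ is ever discarded. Thus neither inequality is affected by the convention, and the equivalence holds as stated.
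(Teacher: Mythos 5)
Your proof is correct. The paper itself offers no argument for this lemma---it simply states that ``the proof of the lemma is immediate''---and your two-inequality verification (choosing $\gamma = g(x)$ for one direction, and using $1-g(x) \geq 1-\gamma > 0$ together with $f(x) \geq 0$ for the other, with the empty-feasible-set convention handled exactly as you describe) is precisely the routine argument the paper leaves to the reader.
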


Then combining Lemma \ref{lem:innerouter} with the upper bound in \eqref{eq:upperbnd} results in the following optimization problem:
\begin{align}\label{eq:robustLQRbnd}
\begin{split}
 \minimize_{\gamma\in[0,1)}\frac{1}{1 - \gamma}&\min_{\tf\Phi_x, \tf \Phi_u} \left\|\begin{bmatrix} Q^\frac{1}{2} & 0 \\ 0 & R^\frac{1}{2}\end{bmatrix}\begin{bmatrix} \tf\Phi_x \\  \tf\Phi_u \end{bmatrix}\right\|_{\htwo}\\
& \text{s.t.} \begin{bmatrix}zI-\Ah&-\Bh\end{bmatrix}\begin{bmatrix} \tf\Phi_x \\  \tf\Phi_u \end{bmatrix} = I,~~\left\|\begin{bmatrix} \tfrac{\epsilon_A}{\sqrt{\alpha}} \tf \Phi_x \\ \tfrac{\epsilon_B}{\sqrt{1-\alpha}} \tf\Phi_u \end{bmatrix} \right\|_{\hinf}\leq \gamma\\
&\qquad \tf\Phi_x, \tf \Phi_u  \in\frac{1}{z}\mathcal{RH}_\infty.
 \end{split}
\end{align}
We note that this optimization objective is jointly quasi-convex in $(\gamma, \tf \Phi_x, \tf \Phi_u)$. Hence, as a function of $\gamma$ alone the objective is quasi-convex, and furthermore is smooth in the feasible domain. Therefore, the outer optimization with respect to $\gamma$ can effectively be solved with methods like golden section search. We remark that the inner optimization is a convex problem, though an infinite dimensional one.  We show in Section~\ref{sec:computation} that a simple finite impulse response truncation yields a finite dimensional problem with similar guarantees of robustness and performance.

We further remark that because $\gamma \in [0,1)$, any feasible solution $(\tf \Phi_x, \tf \Phi_u)$ to optimization problem \eqref{eq:robustLQRbnd} generates a controller $\tf K = \tf \Phi_u \tf \Phi_x^{-1}$ satisfying the conditions of Corollary \ref{coro:stable}, and hence stabilizes the true system $(A,B)$.  Therefore, even if the solution is approximated, as long as it is feasible, it will be stabilizing.  As we show in the next section, for sufficiently small estimation error bounds $\epsilon_A$ and $\epsilon_B$, we can further bound the sub-optimality of the performance achieved by our robustly stabilizing controller relative to that achieved by the optimal LQR controller $\trueK$.

\section{Sub-optimality Guarantees}
\label{sec:subopt}
We now return to analyzing the Coarse-ID control problem. We upper bound the performance of the
 controller synthesized using the optimization \eqref{eq:robustLQRbnd} in terms of the size of the perturbations $(\Delta_A$, $\Delta_B)$ and a measure of complexity of the LQR problem defined by $A$, $B$, $Q$, and $R$. The following result is one of our main contributions.

\begin{theorem}
\label{thm:lqr_cost}
Let $J_\star$ denote the minimal LQR cost achievable by any controller for the dynamical system with transition matrices $(A,B)$, and let $\trueK$ denote the optimal contoller. Let $(\Ah,\Bh)$ be estimates of the transition matrices such that $\ltwonorm{\Delta_A} \leq \epsilon_A$, $\ltwonorm{\Delta_B} \leq \epsilon_B$. Then, if $\tf K$ is synthesized via \eqref{eq:robustLQRbnd} with $\alpha = 1/2$, the relative error in the LQR cost is
\begin{align} \label{eq:lqr_bound}
\frac{J(\trueA, \trueB, \tf K) - J_\star }{J_\star} \leq 5 (\epsilon_A + \epsilon_B\ltwonorm{\trueK})\hinfnorm{\Res{A+B\trueK}} \:,
\end{align}
as long as $(\epsilon_A + \epsilon_B\ltwonorm{\trueK})\|\Res{A+B\trueK}\|_{\hinf}\leq 1/5$.
\end{theorem}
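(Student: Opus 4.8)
The plan is to bound the optimal value of the robust synthesis problem \eqref{eq:robustLQRbnd} (with $\alpha = 1/2$) from above by evaluating the objective at a single cleverly chosen feasible point, and then to recall that the true cost of the synthesized controller never exceeds this value. Indeed, by the bound \eqref{eq:upperbnd} and Corollary~\ref{coro:stable}, the controller $\tf K$ returned by \eqref{eq:robustLQRbnd} stabilizes $(A,B)$ and satisfies $J(\trueA,\trueB,\tf K) \le \frac{1}{1-\gamma} J(\Ah,\Bh,\tf K)$ evaluated at the optimizer, which is precisely the optimal value of \eqref{eq:robustLQRbnd}. Writing $\zeta := (\epsilon_A + \epsilon_B\ltwonorm{\trueK})\hinfnorm{\Res{A+B\trueK}}$, it therefore suffices to exhibit a feasible triple $(\tf\Phi_x,\tf\Phi_u,\gamma)$ whose objective is at most $J_\star/(1-(1+\sqrt2)\zeta)$.

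The natural candidate is built from the true optimal controller $\trueK$. I would set $\tf{\Phi}_x^\star := \Res{A+B\trueK}$ and $\tf{\Phi}_u^\star := \trueK\,\Res{A+B\trueK}$, the system responses of $\trueK$ on the \emph{true} plant, so that $\begin{bmatrix} zI-A & -B\end{bmatrix}\begin{bmatrix}\tf{\Phi}_x^\star \\ \tf{\Phi}_u^\star\end{bmatrix} = I$ and $J_\star = \bightwonorm{\begin{bmatrix} Q^{1/2} & 0 \\ 0 & R^{1/2}\end{bmatrix}\begin{bmatrix}\tf{\Phi}_x^\star \\ \tf{\Phi}_u^\star\end{bmatrix}}$ by the $\htwo$ equivalence \eqref{eq:lqr2}. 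Substituting $A = \Ah-\Delta_A$ and $B = \Bh-\Delta_B$ shows these responses violate the \emph{nominal} constraint by exactly $\tf{\Delta}_\star := (\Delta_A + \Delta_B\trueK)\Res{A+B\trueK}$, i.e. $\begin{bmatrix}zI-\Ah & -\Bh\end{bmatrix}\begin{bmatrix}\tf{\Phi}_x^\star \\ \tf{\Phi}_u^\star\end{bmatrix} = I-\tf{\Delta}_\star$. I would therefore take the corrected responses $\tilde{\tf\Phi}_x := \tf{\Phi}_x^\star(I-\tf{\Delta}_\star)^{-1}$ and $\tilde{\tf\Phi}_u := \tf{\Phi}_u^\star(I-\tf{\Delta}_\star)^{-1}$: right-multiplying by $(I-\tf{\Delta}_\star)^{-1}$ restores the nominal affine constraint while leaving the induced controller $\tilde{\tf\Phi}_u\tilde{\tf\Phi}_x^{-1} = \trueK$ unchanged. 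Membership in $\frac1z\mathcal{RH}_\infty$ requires $(I-\tf{\Delta}_\star)^{-1}\in\mathcal{RH}_\infty$, which is where the hypothesis $\zeta \le 1/5 < 1$ enters.

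Next I would control the two quantities in the objective. Submultiplicativity of the $\hinf$ norm and the constant-matrix bound $\ltwonorm{\Delta_A+\Delta_B\trueK}\le \epsilon_A + \epsilon_B\ltwonorm{\trueK}$ give $\hinfnorm{\tf{\Delta}_\star}\le\zeta$, hence $\hinfnorm{(I-\tf{\Delta}_\star)^{-1}}\le (1-\zeta)^{-1}$. For the robustness constraint at $\alpha=1/2$, factoring out $(I-\tf{\Delta}_\star)^{-1}$ and applying the block-column analogue of \eqref{eq:block-norm-bound} to $\begin{bmatrix}\sqrt2\,\epsilon_A I \\ \sqrt2\,\epsilon_B\trueK\end{bmatrix}$ yields $H_{1/2}(\tilde{\tf\Phi}_x,\tilde{\tf\Phi}_u)\le \sqrt2\,(\epsilon_A+\epsilon_B\ltwonorm{\trueK})\hinfnorm{\Res{A+B\trueK}}\,(1-\zeta)^{-1} = \sqrt2\,\zeta/(1-\zeta)$, which I set equal to the outer variable $\gamma$ (this is $<1$ under the hypothesis). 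For the numerator, the submultiplicative bound $\bightwonorm{\tf G\tf H}\le\bightwonorm{\tf G}\hinfnorm{\tf H}$ together with the factorization above gives $\bightwonorm{\begin{bmatrix} Q^{1/2} & 0 \\ 0 & R^{1/2}\end{bmatrix}\begin{bmatrix}\tilde{\tf\Phi}_x \\ \tilde{\tf\Phi}_u\end{bmatrix}}\le J_\star/(1-\zeta)$.

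Assembling these pieces, the objective of \eqref{eq:robustLQRbnd} at this feasible point is at most $\frac{1}{1-\gamma}\cdot\frac{J_\star}{1-\zeta} = \frac{J_\star}{(1-\zeta)-\sqrt2\,\zeta} = \frac{J_\star}{1-(1+\sqrt2)\zeta}$, so $J(\trueA,\trueB,\tf K)\le J_\star/(1-(1+\sqrt2)\zeta)$. Rearranging, the relative error is at most $(1+\sqrt2)\zeta/(1-(1+\sqrt2)\zeta)$; the assumption $\zeta\le1/5$ keeps the denominator above $1-(1+\sqrt2)/5$, and a short computation then shows the whole expression is bounded by $5\zeta$, as claimed. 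I expect the crux to be conceptual rather than computational: recognizing that the correct comparison point is the true optimal controller \emph{re-expressed} through the nominal constraint via $(I-\tf{\Delta}_\star)^{-1}$ — so its induced controller stays $\trueK$ while its responses become nominally feasible — and then propagating the $\hinf$ and $\htwo$ bounds cleanly through the two appearances of $(1-\zeta)$ and the $\sqrt2$ produced by the choice $\alpha=1/2$. The final constant-chasing to land exactly on $5$ is routine once these estimates are in hand.
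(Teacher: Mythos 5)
Your proposal is correct and takes essentially the same approach as the paper: the feasible point you construct by right-multiplying the true optimal responses by $(I-\tf{\Delta}_\star)^{-1}$ is exactly the one used in the paper's Lemma~\ref{lemma:feasibility} (your $\tf{\Delta}_\star$ is the paper's $-\tf{\Delta}$), and your comparison-by-optimality, the $\hinf$/$\htwo$ submultiplicative bounds, and the resulting constants match the paper's proof step for step. Both arguments end with the identical algebra $(1+\sqrt{2})\zeta/\bigl(1-(1+\sqrt{2})\zeta\bigr)\leq 5\zeta$ under the hypothesis $\zeta\leq 1/5$.
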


This result offers a guarantee on the performance of the SLS synthesized
controller regardless of the estimation procedure used to estimate the
transition matrices. Together with our result (Proposition~\ref{prop:independent_estimation})
on system identification from
independent data, Theorem~\ref{thm:lqr_cost} yields a sample complexity upper bound
on the performance of the robust SLS controller $\tf K$ when $(A,B)$ are not
known. We make this guarantee precise in Corollary~\ref{coro:lqr_cost_iid}
below. The rest of the section is dedicated to proving
Theorem~\ref{thm:lqr_cost}.

Recall that $\trueK$ is the optimal LQR static state feedback matrix for the true dynamics $(A,B)$, and let $
\label{eq:delta-opt} \tf \Delta := -\left[\Delta_A + \Delta_B\trueK\right]\Res{A+B\trueK}$. We begin with a technical result.

\begin{lemma} \label{lemma:feasibility}
Define $\zeta := (\epsilon_A + \epsilon_B\ltwonorm{\trueK})\hinfnorm{\Res{A + B\trueK}}$,
and suppose that $\zeta < (1 +\sqrt{2})^{-1}$. Then $(\gamma_0, \tilde{\tf \Phi}_x, \tilde{\tf \Phi}_u)$ is a feasible solution of \eqref{eq:robustLQRbnd} with $\alpha = 1/2$, where
\begin{align}\label{eq:feasible_sol}
\gamma_0 = \frac{\sqrt{2}\zeta}{1 - \zeta}\text{, } \quad \tilde{\tf \Phi}_x = \Res{A + B\trueK} (I + \tf\Delta)^{-1}\text{, }\quad \tilde{\tf \Phi}_u = \trueK \Res{A + B\trueK} (I +\tf \Delta)^{-1}.
\end{align}
\end{lemma}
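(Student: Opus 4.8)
The plan is to verify directly that the proposed triple $(\gamma_0, \tilde{\tf \Phi}_x, \tilde{\tf \Phi}_u)$ meets every constraint of \eqref{eq:robustLQRbnd} at $\alpha = 1/2$: the nominal affine achievability equation for $(\Ah,\Bh)$, membership in $\frac{1}{z}\mathcal{RH}_\infty$, the robustness bound $H_{1/2}\leq\gamma_0$, and finally $\gamma_0\in[0,1)$. The natural starting point is to observe that the \emph{optimal} responses $\Res{A+B\trueK}$ and $\trueK\Res{A+B\trueK}$ are exactly the system responses of $\trueK$ on the \emph{true} plant $(A,B)$, so by Theorem~\ref{thm:param} they satisfy $\begin{bmatrix} zI - A & -B\end{bmatrix}\begin{bmatrix}\Res{A+B\trueK}\\ \trueK\Res{A+B\trueK}\end{bmatrix}=I$, and both lie in $\frac{1}{z}\mathcal{RH}_\infty$ since $\trueK$ stabilizes $(A,B)$.

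First I would substitute $\Ah = A+\Delta_A$ and $\Bh = B+\Delta_B$ into the nominal map applied to these optimal responses and invoke the true-plant identity to obtain $\begin{bmatrix} zI - \Ah & -\Bh\end{bmatrix}\begin{bmatrix}\Res{A+B\trueK}\\ \trueK\Res{A+B\trueK}\end{bmatrix} = I - [\Delta_A + \Delta_B\trueK]\Res{A+B\trueK} = I + \tf\Delta$, which is precisely where the definition of $\tf\Delta$ originates. Next I would bound $\hinfnorm{\tf\Delta}$ by submultiplicativity and the triangle inequality; treating the constant matrices $\Delta_A,\Delta_B,\trueK$ as transfer matrices whose $\hinf$ norm is their spectral norm, this gives $\hinfnorm{\tf\Delta}\leq(\epsilon_A + \epsilon_B\ltwonorm{\trueK})\hinfnorm{\Res{A+B\trueK}}=\zeta < 1$. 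Consequently $(I+\tf\Delta)^{-1}\in\mathcal{RH}_\infty$, so $\tilde{\tf \Phi}_x,\tilde{\tf \Phi}_u$ are well-defined, and right-multiplying a strictly proper stable factor by this proper stable inverse keeps them in $\frac{1}{z}\mathcal{RH}_\infty$. The affine constraint is then immediate: right-multiplying the previous identity by $(I+\tf\Delta)^{-1}$ yields $\begin{bmatrix} zI - \Ah & -\Bh\end{bmatrix}\begin{bmatrix}\tilde{\tf \Phi}_x\\ \tilde{\tf \Phi}_u\end{bmatrix}=I$.

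The crux is the robustness bound, and it is the step I expect to require the most care. I would factor the weighted stacked response as $\begin{bmatrix}\sqrt2\,\epsilon_A\tilde{\tf \Phi}_x\\ \sqrt2\,\epsilon_B\tilde{\tf \Phi}_u\end{bmatrix}=\begin{bmatrix}\sqrt2\,\epsilon_A I\\ \sqrt2\,\epsilon_B\trueK\end{bmatrix}\Res{A+B\trueK}(I+\tf\Delta)^{-1}$, then split its $\hinf$ norm by submultiplicativity. The constant prefactor is controlled by the block-norm inequality \eqref{eq:block-norm-bound} (applied to the vertical stacking via transposition), giving $\ltwonorm{\cdot}\leq\sqrt2(\epsilon_A^2 + \epsilon_B^2\ltwonorm{\trueK}^2)^{1/2}\leq\sqrt2(\epsilon_A + \epsilon_B\ltwonorm{\trueK})$; the middle factor contributes $\hinfnorm{\Res{A+B\trueK}}$; and the inverse is controlled by the Neumann bound $\hinfnorm{(I+\tf\Delta)^{-1}}\leq(1-\hinfnorm{\tf\Delta})^{-1}\leq(1-\zeta)^{-1}$. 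Multiplying these three factors gives $H_{1/2}(\tilde{\tf \Phi}_x,\tilde{\tf \Phi}_u)\leq\sqrt2\,\zeta/(1-\zeta)=\gamma_0$. Finally, $\gamma_0 < 1$ is equivalent to $\sqrt2\,\zeta < 1-\zeta$, i.e. $\zeta < (1+\sqrt2)^{-1}$, which is exactly the hypothesis, so $\gamma_0\in[0,1)$ and the triple is feasible. The only delicate bookkeeping is keeping the $\hinf$-versus-spectral-norm distinction consistent when peeling off the constant matrices, and confirming the block inequality transfers to vertical stacking.
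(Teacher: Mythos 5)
Your proof is correct and takes essentially the same route as the paper's: both verify the affine constraint by substituting $\Ah = A + \Delta_A$, $\Bh = B + \Delta_B$ into the true-plant identity $\begin{bmatrix} zI - A & -B\end{bmatrix}\begin{bmatrix}\Res{A+B\trueK}\\ \trueK\Res{A+B\trueK}\end{bmatrix} = I$, bound $\hinfnorm{\tf\Delta}\leq \zeta < 1$, and control $H_{1/2}(\tilde{\tf\Phi}_x,\tilde{\tf\Phi}_u)$ by peeling off $(I+\tf\Delta)^{-1}$ via the Neumann bound and bounding the constant stacked factor by $\epsilon_A + \epsilon_B\ltwonorm{\trueK}$, ending with the same arithmetic showing $\gamma_0<1$ is equivalent to $\zeta < (1+\sqrt{2})^{-1}$. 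The only difference is that you spell out the steps the paper compresses into ``simple algebraic computations'' and ``by construction,'' which is a matter of exposition, not substance.
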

\begin{proof}
By construction $\tilde{\tf \Phi}_x, \tilde{ \tf \Phi}_u \in \frac{1}{z}\mathcal{RH}_\infty$. Therefore, we are left to check three conditions:
\begin{align}\label{eq:conds_to_check}
 \gamma_0 < 1\text{,} \quad \begin{bmatrix}zI-\Ah&-\Bh\end{bmatrix}\begin{bmatrix} \tilde{\tf\Phi}_x \\  \tilde{\tf\Phi}_u \end{bmatrix} = I\;\text{, and} \quad \bignorm{\begin{bmatrix}\tfrac{\epsilon_A}{\sqrt{\alpha}}
 \tilde{\tf \Phi}_x\\
 \tfrac{\epsilon_B}{\sqrt{1-\alpha}} \tilde{\tf \Phi}_u\end{bmatrix}}_{\hinf} \leq \frac{\sqrt{2}\zeta}{1 - \zeta}.
\end{align}
The first two conditions follow by simple algebraic computations.
Before we check the last condition, note that
$\hinfnorm{{\tf \Delta}} \leq (\epsilon_A + \epsilon_B \ltwonorm{\trueK})\hinfnorm{\Res{A+B\trueK}} = \zeta < 1$.
Now observe that,
\begin{align*}
 \bignorm{\begin{bmatrix}\tfrac{\epsilon_A}{\sqrt{\alpha}}
 \tilde{\tf \Phi}_x\\
 \tfrac{\epsilon_B}{\sqrt{1-\alpha}} \tilde{\tf \Phi}_u\end{bmatrix}}_{\hinf} &= \sqrt{2}\bignorm{\begin{bmatrix}\epsilon_A \Res{A+B\trueK}\\
 \epsilon_B \trueK \Res{A+B\trueK}\end{bmatrix}(I + {\tf\Delta})^{-1}}_{\hinf} \\
& \leq \sqrt{2}\hinfnorm{(I + \tf \Delta)^{-1}}  \bignorm{\begin{bmatrix}\epsilon_A \Res{A+B\trueK}\\
 \epsilon_B \trueK \Res{A+B\trueK}\end{bmatrix}}_{\hinf}\\
 & \leq \frac{\sqrt{2}}{1-\hinfnorm{\tf \Delta}}\bignorm{\begin{bmatrix}\epsilon_A I\\
 \epsilon_B \trueK \end{bmatrix}\Res{A+B\trueK}}_{\hinf} \\
&\leq \frac{\sqrt{2}(\epsilon_A + \epsilon_B \ltwonorm{\trueK})\hinfnorm{\Res{A + B\trueK}}}{1-\hinfnorm{\tf \Delta}} \leq \frac{\sqrt{2}\zeta}{1-\zeta} \:.
\end{align*}
\end{proof}

\begin{proof}[Proof of Theorem~\ref{thm:lqr_cost}]

Let $(\gamma_\star, {\tf \Phi}_x^\star, {\tf \Phi}_u^\star)$ be an optimal solution to problem~\eqref{eq:robustLQRbnd} and let $\tf K = \tf \Phi_u^\star ({\tf\Phi}_x^\star)^{-1}$.
We can then write
\begin{align*}
J(A,B,\tf K)\leq \frac{1}{1-\hinfnorm{\Dh}}J(\Ah,\Bh,\tf K) \leq \frac{1}{1-\gamma_\star} J(\Ah,\Bh,\tf K),
\end{align*}
where the first inequality follows from the bound \eqref{eq:upperbnd1}, and the second follows from the fact that $\hinfnorm{\Dh} \leq \gamma_\star$ due to  Proposition \ref{prop:bound} and the constraint in optimization problem~\eqref{eq:robustLQRbnd}.

From Lemma~\ref{lemma:feasibility} we know that  $(\gamma_0, \tilde{\tf \Phi}_x, \tilde{\tf \Phi}_u)$
defined in equation~\eqref{eq:feasible_sol} is also a feasible solution. Therefore, because $\trueK = \tilde{\tf \Phi}_u \tilde{\tf \Phi}_x^{-1} $, we have by optimality,
\begin{align*}
  \frac{1}{1-\gamma_\star} J(\Ah,\Bh,\tf K) \leq \frac{1}{1-\gamma_0} J(\Ah,\Bh,\trueK) \leq  \frac{J(A,B,\trueK)}{(1 - \gamma_0)(1- \hinfnorm{\tf \Delta})}  = \frac{J_\star}{(1 - \gamma_0)(1- \hinfnorm{\tf \Delta})} \:,
\end{align*}
where the second inequality follows by the argument used to derive \eqref{eq:upperbnd1} with the true and estimated transition matrices switched. Recall that $\hinfnorm{\tf \Delta} \leq \zeta$ and that $\gamma_0 = \sqrt{2}\zeta/(1 + \zeta)$. Therefore
\begin{align*}
\frac{J(A,B,\tf K) - J_\star}{J_\star} \leq \frac{1}{1 - (1 + \sqrt{2})\zeta} - 1 = \frac{(1 + \sqrt{2})\zeta}{1 - (1 + \sqrt{2})\zeta} \leq 5\zeta \:,
\end{align*}
where the last inequality follows because $\zeta < 1/5 < 1/(2 + 2\sqrt{2})$.
The conclusion follows.
\end{proof}

With this suboptimality result in hand, we are now ready to give an end-to-end performance guarantee for our procedure when the independent data estimation scheme is used.

\begin{coro}\label{coro:lqr_cost_iid}
Let $\lambda_G = \lambda_{\min}(\sigma_u^2 G_TG_T^\T + \sigma_w^2 F_TF_T^\T)$,
where $F_T, G_T$ are defined in \eqref{eq:gramians}.
Suppose the independent data estimation procedure
described in Algorithm~\ref{alg:independent_estimation}
is used to produce estimates $(\Ah,\Bh)$ and $\tf K$ is synthesized via \eqref{eq:robustLQRbnd} with $\alpha = 1/2$.  Then there are universal constants $C_0$ and $C_1$ such that the relative error in the LQR cost satisfies
\begin{align} \label{eq:samplecomplexity}
&\frac{J(\trueA, \trueB, \tf K) - J_\star }{J_\star} \leq C_0 \sigma_w \|\Res{\trueA + \trueB\trueK}\|_{\hinf}\left(\frac{1}{\sqrt{\lambda_G}} + \frac{\ltwonorm{\trueK}}{\sigma_u}\right)\sqrt{\frac{(\statedim + \inputdim)\log(1/\delta)}{N}}
\end{align}
with probability $1 - \delta$,
as long as $N \geq C_1(\statedim + \inputdim)\sigma_w^2 \hinfnorm{\Res{A + B\trueK}}^2(1 / \lambda_G +  \ltwonorm{\trueK}^2/\sigma_u^2) \log(1/\delta)$.
\end{coro}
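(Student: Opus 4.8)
The plan is to combine the estimation guarantee of Proposition~\ref{prop:independent_estimation} with the sub-optimality bound of Theorem~\ref{thm:lqr_cost} by direct substitution. First I would invoke Proposition~\ref{prop:independent_estimation}: since Algorithm~\ref{alg:independent_estimation} produces $(\Ah,\Bh)$ from the independent final-sample scheme, provided $N \geq 8(\statedim+\inputdim) + 16\log(4/\delta)$ we obtain, with probability at least $1-\delta$, the simultaneous bounds
\begin{align*}
\epsilon_A \leq \frac{16\sigma_w}{\sqrt{\lambda_G}}\sqrt{\frac{(\statedim+2\inputdim)\log(36/\delta)}{N}}, \qquad \epsilon_B \leq \frac{16\sigma_w}{\sigma_u}\sqrt{\frac{(\statedim+2\inputdim)\log(36/\delta)}{N}}.
\end{align*}

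Next I would form the quantity $\zeta := (\epsilon_A + \epsilon_B\ltwonorm{\trueK})\hinfnorm{\Res{A+B\trueK}}$ that governs Theorem~\ref{thm:lqr_cost}, and bound it by factoring out the common $\sqrt{(\statedim+2\inputdim)\log(36/\delta)/N}$ term:
\begin{align*}
\zeta \leq 16\sigma_w\hinfnorm{\Res{A+B\trueK}}\left(\frac{1}{\sqrt{\lambda_G}} + \frac{\ltwonorm{\trueK}}{\sigma_u}\right)\sqrt{\frac{(\statedim+2\inputdim)\log(36/\delta)}{N}}.
\end{align*}

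The key consistency check comes next, since Theorem~\ref{thm:lqr_cost} applies only when $\zeta \leq 1/5$. I would show that the stated requirement $N \geq C_1(\statedim+\inputdim)\sigma_w^2\hinfnorm{\Res{A+B\trueK}}^2(1/\lambda_G + \ltwonorm{\trueK}^2/\sigma_u^2)\log(1/\delta)$ is precisely what forces $\zeta \leq 1/5$ for a suitably large universal $C_1$. Here I would use the elementary inequalities $(\statedim+2\inputdim)\leq 2(\statedim+\inputdim)$ and $(a+b)^2 \leq 2(a^2+b^2)$ to pass from $(1/\sqrt{\lambda_G}+\ltwonorm{\trueK}/\sigma_u)^2$ to $(1/\lambda_G + \ltwonorm{\trueK}^2/\sigma_u^2)$, and $\log(36/\delta)\leq C\log(1/\delta)$ (valid for $\delta$ bounded away from $1$) to absorb the numerical constant inside the logarithm. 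Choosing $C_1$ large enough also subsumes the earlier estimation requirement $N \geq 8(\statedim+\inputdim)+16\log(4/\delta)$, so both hypotheses hold on the same $1-\delta$ event.

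Finally, with $\zeta \leq 1/5$ verified, Theorem~\ref{thm:lqr_cost} yields $\frac{J(\trueA,\trueB,\tf K)-J_\star}{J_\star}\leq 5\zeta$, and substituting the bound on $\zeta$ from the second step gives the claimed inequality with $C_0$ on the order of $80$ (after the logarithm conversion). There is no genuine obstacle here—the result is a composition of two established theorems—so the main care required is bookkeeping: exhibiting a single explicit $C_1$ that simultaneously enforces $\zeta\le 1/5$ and the estimation sample-size condition, and confirming that the dimension and logarithmic factors collapse into the clean $(\statedim+\inputdim)\log(1/\delta)$ form. No new probabilistic argument is needed beyond the $1-\delta$ event of Proposition~\ref{prop:independent_estimation}.
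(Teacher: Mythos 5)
Your proposal is correct and follows essentially the same route as the paper's own proof: invoke Proposition~\ref{prop:independent_estimation} for the bounds on $\epsilon_A,\epsilon_B$, verify that the sample-size condition forces $\zeta = (\epsilon_A + \epsilon_B\ltwonorm{\trueK})\hinfnorm{\Res{A+B\trueK}} \leq 1/5$, and then plug the estimation rates into the bound of Theorem~\ref{thm:lqr_cost}. Your version is in fact slightly more careful on the bookkeeping (e.g., the $(\statedim+2\inputdim)\le 2(\statedim+\inputdim)$ and $\log(36/\delta)\le C\log(1/\delta)$ absorptions, and noting that $C_1$ must also subsume the sample-size requirement of the estimation step), details the paper leaves implicit.
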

\begin{proof}
Recall from Proposition \ref{prop:independent_estimation} that for the independent data estimation scheme, we have
\begin{align} \label{eq:estimation_rate}
\epsilon_A \leq \frac{16\sigma_w}{\sqrt{\lambda_G}}\sqrt{\frac{(\statedim + 2\inputdim)\log(32/\delta)}{N}}\; \text{, and}\quad \epsilon_B \leq \frac{16\sigma_w}{\sigma_u}\sqrt{\frac{(\statedim + 2\inputdim)\log(32/\delta)}{N}},
\end{align}
with probability $1 - \delta$, as long as $N \geq 8 (\statedim + \inputdim) + 16 \log(4/\delta)$.

To apply Theorem~\ref{thm:lqr_cost} we need $(\epsilon_A + \epsilon_B\ltwonorm{\trueK})\hinfnorm{\Res{A + B\trueK}} < 1/5$, which will hold as long as $N \geq \mathcal{O}\left\{(\statedim + \inputdim)\sigma_w^2 \hinfnorm{\Res{A + B\trueK}}^2(1 / \lambda_G +  \ltwonorm{\trueK}^2/\sigma_u^2) \log(1/\delta)\right\}$. A direct plug in of \eqref{eq:estimation_rate} in \eqref{eq:lqr_bound} yields the conclusion.
\end{proof}

This result fully specifies the complexity term $\mathcal{C}_{\mathrm{LQR}}$ promised in the introduction:
\begin{align*}
\mathcal{C}_{\mathrm{LQR}} := C_0 \sigma_w \left(\frac{1}{\sqrt{\lambda_G}} + \frac{\|\trueK\|_2}{\sigma_u}\right)\hinfnorm{\Res{\trueA + \trueB\trueK}}.
\end{align*}

Note that $\mathcal{C}_{\mathrm{LQR}}$ decreases as the minimum eigenvalue of the sum of the input and noise controllability Gramians increases. This minimum eigenvalue tends to be larger for systems that amplify inputs in all directions of the state-space.  $\mathcal{C}_{\mathrm{LQR}}$ increases as function of the operator norm of the gain matrix $\trueK$ and the $\hinf$ norm of the transfer function from disturbance to state of the closed-loop system.  These two terms tend to be larger for systems that are ``harder to control.'' The dependence on $Q$ and $R$ is implicit in this definition since the optimal control matrix $\trueK$ is defined in terms of these two matrices. Note that when $R$ is large in comparison to $Q$, the norm of the controller $\trueK$ tends to be smaller because large inputs are more costly. However, such a change in the size of the controller could cause an increase in
the $\hinf$ norm of the closed-loop system.
Thus, our upper bound suggests an odd balance.  Stable and highly damped systems are easy to control but hard to estimate, whereas unstable systems are easy to estimate but hard to control. Our theorem suggests that achieving a small relative LQR cost requires for the system to be somewhere in the middle of these two extremes.

Finally, we remark that the above analysis holds more generally when we apply additional constraints to the controller in the synthesis problem~\eqref{eq:robustLQRbnd}.  In this case, the suboptimality bounds presented in Theorem \ref{thm:lqr_cost} and Corrollary \ref{coro:lqr_cost_iid} are true with respect to the minimal cost achievable by the constrained controller with access to the true dynamics. In particular, the bounds hold unchanged if the search is restricted to static controllers, i.e. $u_t = K x_t$. This is true because the optimal controller is static and therefore feasible for the constrained synthesis problem.

\section{Computation}      
\label{sec:computation}

As posed, the main optimization problem~\eqref{eq:robustLQRbnd} is a semi-infinite program, and we are not aware of a way to solve this problem efficiently.  In this section we describe two alternative formulations that provide upper bounds to the optimal value and that can be solved in polynomial time.

\subsection{Finite impulse response approximation}\label{sec:approx-FIR}
An elementary approach to reducing the aforementioned semi-infinite program to a finite dimensional one is to only optimize over the first $L$ elements of the transfer functions $\tf \Phi_x$ and $\tf \Phi_u$, effectively taking a finite impulse response (FIR) approximation. Since these are both stable maps, we expect the effects of such an approximation to be negligible as long as the optimization horizon $L$ is chosen to be sufficiently large -- in what follows, we show that this is indeed the case.

By restricting our optimization to FIR approximations of $\tf \Phi_x$ and $\tf \Phi_u$, we can cast the $\htwo$ cost as a second order cone constraint.  The only difficulty arises in posing the $\hinf$ constraint as a semidefinite program.  Though there are several ways to cast $\hinf$ constraints as linear matrix inequalities, we use the formulation in Theorem 5.8 of Dumitrescu's text to take advantage of the FIR structure in our problem~\cite{dumitrescu2007positive}. We note that using Dumitrescu's formulation, the resulting problem is affine in $\alpha$ when $\gamma$ is fixed, and hence we can solve for the optimal value of $\alpha$.
Then the resulting system response elements can be cast as a dynamic feedback controller using Theorem 2 of~\citet{anderson17}.

\subsubsection{Sub-optimality guarantees}
In this subsection we show that optimizing over FIR approximations incurs only a small degradation in performance relative to the solution to the infinite-horizon problem.  In particular, this degradation in performance decays exponentially in the FIR horizon $L$, where the rate of decay is specified by the decay rate of the spectral elements of the optimal closed loop system response $\Res{\trueA + \trueB\trueK}$.

Before proceeding, we introduce additional concepts and notation needed to formalize guarantees in the FIR setting.  A linear-time-invariant transfer function is stable if and only if it is exponentially stable, i.e., $\tf \Phi = \sum_{t=0}^\infty z^{-t}\Phi(t) \in \mathcal{RH}_\infty$ if and only if there exists positive values $C$ and $\rho \in [0,1)$ such that for every spectral element $\Phi(t)$, $t\geq 0$, it holds that
\begin{equation}
\twonorm{\Phi(t)} \leq C \rho^t.
\label{eq:exp_decay}
\end{equation}
In what follows, we pick $C_\star$ and $\rho_\star$ to be any such constants satisfying $\twonorm{\Res{\trueA + \trueB \trueK}(t)} \leq C_\star \rho_\star^t$ for all $t\geq 0$.

We introduce a version of the optimization problem~\eqref{eq:robustLQR} with a finite number of decision variables:

\begin{align}\label{eq:robustFIRbnd}
\begin{split}
 \minimize_{\gamma\in[0,1)}\frac{1}{1 - \gamma}&\min_{\tf\Phi_x, \tf \Phi_u, V} \left\|\begin{bmatrix} Q^\frac{1}{2} & 0 \\ 0 & R^\frac{1}{2}\end{bmatrix}\begin{bmatrix} \tf\Phi_x \\  \tf\Phi_u \end{bmatrix}\right\|_{\htwo}\\
& \text{s.t.} \begin{bmatrix}zI-\Ah&-\Bh\end{bmatrix}\begin{bmatrix} \tf\Phi_x \\  \tf\Phi_u \end{bmatrix} = I + \frac{1}{z^L}V, \\ &\left\|\begin{bmatrix} \tfrac{\epsilon_A}{\sqrt{\alpha}} \tf \Phi_x \\ \tfrac{\epsilon_B}{\sqrt{1-\alpha}} \tf\Phi_u \end{bmatrix} \right\|_{\hinf} + \twonorm{V} \leq \gamma\\
&\tf\Phi_x = \sum_{t=1}^L \frac{1}{z^t}\Phi_x(t), \, \tf\Phi_u = \sum_{t=1}^L \frac{1}{z^t}\Phi_u(t).
 \end{split}
\end{align}
In this optimization problem we search over finite response transfer functions $\tf \Phi_x$ and $\tf \Phi_u$.
Given a feasible solution $\tf \Phi_x $, $ \tf \Phi_u$ of problem \eqref{eq:robustFIRbnd}, we can implement the controller $\tf K_L = \tf \Phi_u \tf \Phi_x^{-1}$ with an equivalent state-space representation $(A_K, B_K, C_K, D_K)$ using the
response elements $\{ \Phi_x(k) \}_{k=1}^{L}$ and
$\{ \Phi_u(k) \}_{k=1}^{L}$  via Theorem 2 of \cite{anderson17}.

The slack term $V$ accounts for the error introduced by truncating the infinite response transfer functions of problem \eqref{eq:robustLQR}.
Intuitively, if the truncated tail is sufficiently small, then the effects of this approximation should be negligible on performance. The next result formalizes this intuition.

\begin{theorem}\label{thm:FIR_subopt}
Set $\alpha = 1/2$ in \eqref{eq:robustFIRbnd} and let $C_\star > 0$ and $\rho_\star \in [0,1)$ be such that $\twonorm{\Res{(\trueA + \trueB \trueK)}(t)} \leq C_\star \rho_\star^t$ for all $t \geq 0$.  Then, if $\tf K_L$ is synthesized via \eqref{eq:robustFIRbnd}, the relative error in the LQR cost is
\begin{align*}
\frac{J(\trueA, \trueB, \tf K_L) - J_\star}{J_\star} \leq 10 (\epsilon_A + \epsilon_B \twonorm{\trueK}) \hinfnorm{\Res{\trueA + \trueB \trueK}},
\end{align*}
as long as
\begin{align*}
\epsilon_A + \epsilon_B \twonorm{\trueK} \leq  \frac{1 - \rho_\star}{10C_\star} \; \text{ and }\; L \geq \frac{4 \log\left(\frac{C_\star}{(\epsilon_A + \epsilon_B \twonorm{\trueK})\hinfnorm{\Res{\trueA + \trueB \trueK}}} \right)}{1 - \rho_\star}.
\end{align*}
\end{theorem}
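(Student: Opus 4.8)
The plan is to mirror the infinite-horizon argument of Lemma~\ref{lemma:feasibility} and the proof of Theorem~\ref{thm:lqr_cost}: I would exhibit an explicit feasible point of the finite-dimensional program~\eqref{eq:robustFIRbnd}, obtained by truncating the infinite-horizon feasible point, and then conclude by optimality together with the robust-stability machinery. The key preliminary observation is that the feasible solution of Lemma~\ref{lemma:feasibility} is exactly the closed-loop response of the true optimal controller $\trueK$ on the \emph{nominal} system: since $(A + B\trueK) + (\Delta_A + \Delta_B\trueK) = \Ah + \Bh\trueK$, a one-line resolvent identity gives
\begin{align*}
\Res{A + B\trueK}(I + \tf{\Delta})^{-1} = \Res{\Ah + \Bh\trueK} \:, \quad \trueK\Res{A + B\trueK}(I + \tf{\Delta})^{-1} = \trueK\Res{\Ah + \Bh\trueK} \:.
\end{align*}
I would therefore take as candidate FIR variables the length-$L$ truncations $\tf\Phi_x = \sum_{t=1}^L z^{-t}(\Ah + \Bh\trueK)^{t-1}$ and $\tf\Phi_u = \trueK\tf\Phi_x$.

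Next I would pin down the slack $V$. Writing $N := \Ah + \Bh\trueK$ and telescoping $(zI - N)\sum_{t=1}^L z^{-t} N^{t-1}$ shows that this candidate satisfies the FIR achievability constraint with $V = -N^L$, i.e.
\begin{align*}
\begin{bmatrix} zI - \Ah & -\Bh \end{bmatrix}\begin{bmatrix} \tf\Phi_x \\ \tf\Phi_u \end{bmatrix} = I - \frac{1}{z^L}(\Ah + \Bh\trueK)^L \:.
\end{align*}
It then remains to verify the combined constraint $\hinfnorm{[\tfrac{\epsilon_A}{\sqrt\alpha}\tf\Phi_x ; \tfrac{\epsilon_B}{\sqrt{1-\alpha}}\tf\Phi_u]} + \twonorm{V} \le \gamma$ for a suitable $\gamma < 1$. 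The first term I would split as the untruncated $\hinf$ norm — which is exactly the quantity bounded by $\gamma_0 = \sqrt{2}\zeta/(1-\zeta)$ in Lemma~\ref{lemma:feasibility} (with $\alpha = 1/2$) — plus the $\hinf$ norm of the dropped tail, itself at most $(\epsilon_A + \epsilon_B\twonorm{\trueK})\sum_{t > L}\twonorm{(\Ah+\Bh\trueK)^{t-1}}$; the second term is $\twonorm{(\Ah+\Bh\trueK)^L}$.

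The main obstacle is that the truncation lives on the \emph{nominal} closed loop $\Ah + \Bh\trueK$, whereas the decay constants $C_\star, \rho_\star$ are supplied only for the \emph{true} closed loop $A + B\trueK$. Bridging this gap is the crux. Writing $M = A + B\trueK$ and $D = \Delta_A + \Delta_B\trueK$ with $\twonorm{D} \le \epsilon := \epsilon_A + \epsilon_B\twonorm{\trueK}$, I would expand $(M+D)^L$ over the $2^L$ words in $M$ and $D$, group maximal runs of $M$, and apply $\twonorm{M^k} \le C_\star\rho_\star^k$ (note $C_\star \ge 1$) to each run, obtaining the perturbed-power bound
\begin{align*}
\twonorm{(\Ah + \Bh\trueK)^L} \le \sum_{j=0}^L \binom{L}{j} C_\star^{\,j+1}\rho_\star^{L-j}\epsilon^j = C_\star(\rho_\star + C_\star\epsilon)^L \:.
\end{align*}
The hypothesis $\epsilon \le (1-\rho_\star)/(10 C_\star)$ forces the effective rate $\widetilde\rho := \rho_\star + C_\star\epsilon$ below $1$ (indeed $1 - \widetilde\rho \ge \tfrac{9}{10}(1-\rho_\star)$), and since $\log(1/\widetilde\rho) \ge 1-\widetilde\rho \ge \tfrac14(1-\rho_\star)$, the stated lower bound on $L$ makes $C_\star\widetilde\rho^L \le \zeta$. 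Hence both $\twonorm{V}$ and the geometric tail sum (which carries an extra factor $\epsilon/(1-\widetilde\rho) = O(1)$) are $O(\zeta)$, so the combined constraint holds with some $\gamma_1 = \gamma_0 + O(\zeta) < 1$.

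Finally I would conclude by optimality. For the optimizer $(\gamma_\star, \tf\Phi_x^\star, \tf\Phi_u^\star, V^\star)$ of~\eqref{eq:robustFIRbnd} and $\tf K_L = \tf\Phi_u^\star(\tf\Phi_x^\star)^{-1}$, subtracting $\begin{bmatrix}\Delta_A & \Delta_B\end{bmatrix}$ from the nominal achievability identity shows the total residual on the true system is $\tf{\Delta}_{\mathrm{tot}} = \tfrac{1}{z^L}V^\star + \begin{bmatrix}\Delta_A & \Delta_B\end{bmatrix}\begin{bmatrix}\tf\Phi_x^\star \\ \tf\Phi_u^\star\end{bmatrix}$, and Proposition~\ref{prop:bound} bounds $\hinfnorm{\tf{\Delta}_{\mathrm{tot}}} \le \twonorm{V^\star} + \hinfnorm{[\tfrac{\epsilon_A}{\sqrt\alpha}\tf\Phi_x^\star;\tfrac{\epsilon_B}{\sqrt{1-\alpha}}\tf\Phi_u^\star]} \le \gamma_\star < 1$. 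Thus $\tf K_L$ stabilizes $(A,B)$ by Theorem~\ref{thm:robust}, and as in~\eqref{eq:upperbnd1},
\begin{align*}
J(\trueA,\trueB,\tf K_L) \le \frac{1}{1-\gamma_\star}\left\|\begin{bmatrix}Q^{1/2}&0\\0&R^{1/2}\end{bmatrix}\begin{bmatrix}\tf\Phi_x^\star\\\tf\Phi_u^\star\end{bmatrix}\right\|_{\htwo} \:.
\end{align*}
The right-hand side is precisely the optimal value of~\eqref{eq:robustFIRbnd}, so it is at most the value attained by my candidate at level $\gamma_1$; because truncation only discards spectral elements, the $\htwo$ objective of the candidate is at most $J(\Ah,\Bh,\trueK) \le J_\star/(1-\zeta)$ (the last step being the switched-roles version of~\eqref{eq:upperbnd1} used in Theorem~\ref{thm:lqr_cost}). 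Combining yields $J(\trueA,\trueB,\tf K_L) \le J_\star/[(1-\gamma_1)(1-\zeta)]$, and tracking the constants through $\gamma_1 = \gamma_0 + O(\zeta)$ — which accounts for the factor-two degradation relative to Theorem~\ref{thm:lqr_cost} — gives the claimed relative-error bound $10(\epsilon_A + \epsilon_B\twonorm{\trueK})\hinfnorm{\Res{\trueA+\trueB\trueK}}$.
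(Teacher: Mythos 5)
Your proposal is correct and follows essentially the same route as the paper's own proof: the same truncated nominal closed-loop response $\bigl(\Res{\Ah+\Bh\trueK}(1:L),\,\trueK\Res{\Ah+\Bh\trueK}(1:L)\bigr)$ with slack $V=-(\Ah+\Bh\trueK)^L$ as the feasible point, the same binomial expansion bounding $\twonorm{(\Ah+\Bh\trueK)^t}\leq C_\star(\rho_\star+C_\star\epsilon)^t$ (the paper's Lemma~\ref{lem:simple_perturbation}), the same split of the truncated $\hinf$ norm into the infinite-horizon term plus a geometric tail, and the same optimality-plus-robust-stability conclusion. The only cosmetic differences are that you keep the effective decay rate $\rho_\star+C_\star\epsilon$ instead of coarsening it to $(1+\rho_\star)/2$, and you leave the final constant accounting (the paper's $\tilde\gamma=4\zeta/(1-\zeta)$ yielding $5\zeta/(1-5\zeta)\leq 10\zeta$) slightly implicit, though your intermediate bounds do support it.
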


The proof of this result, deferred to Appendix \ref{app:FIR}, is conceptually the same as that of the infinite horizon setting. The main difference is that care must be taken to ensure that the approximation horizon $L$ is sufficiently large so as to ensure stability and performance of the resulting controller.  From the theorem statement,  we see that for such an appropriately chosen FIR approximation horizon $L$, our performance bound is the same, up to universal constants, to that achieved by the solution to the infinite horizon problem.  Furthermore, the approximation horizon $L$ only needs to grow logarithmically with respect to one over the estimation rate in order to preserve the same statistical rate as the controller produced by the infinite horizon problem. Finally, an end-to-end sample complexity result analogous to that stated in Corollary \ref{coro:lqr_cost_iid} can be easily obtained by simply substituting in the sample-complexity bounds on $\epsilon_A$ and $\epsilon_B$ specified in Proposition \ref{prop:independent_estimation}.

\subsection{Static controller and a common Lyapunov approximation}\label{sec:approx-CL}

As we have reiterated above, when the dynamics are known, the optimal LQR control law takes the form $u_t = K x_t$ for properly chosen static gain matrix $ K$.
We can reparameterize the optimization problem~\eqref{eq:robustLQRbnd} to restrict our attention to such static control policies:
\begin{align}\label{eq:robustLQRbnd-static}
\begin{split}
 \minimize_{\gamma\in[0,1)}\frac{1}{1 - \gamma}&\min_{\tf\Phi_x, \tf \Phi_u,K} \left\|\begin{bmatrix} Q^\frac{1}{2} & 0 \\ 0 & R^\frac{1}{2}\end{bmatrix}\begin{bmatrix} \tf\Phi_x \\  \tf\Phi_u \end{bmatrix}\right\|_{\htwo}\\
& \text{s.t.} \begin{bmatrix}zI-\Ah&-\Bh\end{bmatrix}\begin{bmatrix} \tf\Phi_x \\  \tf\Phi_u \end{bmatrix} = I,~~\left\|\begin{bmatrix} \tfrac{\epsilon_A}{\sqrt{\alpha}} \tf \Phi_x \\ \tfrac{\epsilon_B}{\sqrt{1-\alpha}} \tf\Phi_u \end{bmatrix} \right\|_{\hinf}\leq \gamma\\
&\qquad \tf\Phi_x, \tf \Phi_u  \in\frac{1}{z}\mathcal{RH}_\infty\,,~~ K=\tf \Phi_u \tf \Phi_x^{-1}.
 \end{split}
\end{align}
Under this reparameterization, the problem is no longer convex.  Here we present a simple application of the \emph{common Lyapunov relaxation} that allows us to find a controller $K$ using semidefinite programming.

Note that the equality constraints imply:
\begin{align*}
I=\begin{bmatrix} zI-\Ah&-\Bh\end{bmatrix}\begin{bmatrix} \tf\Phi_x \\  \tf\Phi_u \end{bmatrix} =\begin{bmatrix} zI-\Ah&-\Bh\end{bmatrix}\begin{bmatrix} I \\  K \end{bmatrix} \tf \Phi_x
=(zI-\Ah-\Bh K) \tf \Phi_x\,,
\end{align*}
revealing that we must have
\begin{align*}
	\tf \Phi_x = (zI - \Ah-\Bh K)^{-1} ~~\mbox{and}~~\tf \Phi_u= K(zI - \Ah-\Bh K)^{-1}\,.
\end{align*}
With these identifications,~\eqref{eq:robustLQRbnd-static} can be reformulated as
\begin{align}\label{eq:robustLQRbnd-static2}
\begin{split}
 \minimize_{\gamma\in[0,1)}\frac{1}{1 - \gamma}&\min_{K} \left\|\begin{bmatrix} Q^\frac{1}{2} & 0 \\ 0 & R^\frac{1}{2}K\end{bmatrix}(zI - \Ah-\Bh K)^{-1} \right\|_{\htwo}\\
& \text{s.t.} \left\|\begin{bmatrix} \tfrac{\epsilon_A}{\sqrt{\alpha}}  \\ \tfrac{\epsilon_B}{\sqrt{1-\alpha}} K\end{bmatrix} (zI - \Ah -\Bh K)^{-1}\right\|_{\hinf}\leq \gamma
 \end{split}
\end{align}

Using standard techniques from the robust control literature, we can upper bound this problem via the semidefinite program
\begin{align}\label{eq:robustLQRbnd-common-lyap}
\begin{array}{ll}
\operatorname{minimize}\limits_{X,Z,W,\alpha,\gamma} &\frac{1}{(1-\gamma)^2}
\left\{ \operatorname{Trace}(Q W_{11}) + \operatorname{Trace}(R W_{22})  \right\}\\
\mbox{subject to}
& \begin{bmatrix} X &  X  & Z^* \\
                   X & W_{11} & W_{12} \\
                  Z & W_{21} & W_{22} \end{bmatrix} \succeq 0\\
&\begin{bmatrix} X-I &  (\Ah+\Bh K)X  & 0 & 0 \\
    X(\Ah+\Bh K)^* & X & \epsilon_A X & \epsilon_B Z^* \\
    0 & \epsilon_A X & \alpha\gamma^2 I & 0\\
    0 & \epsilon_B Z & 0 & (1-\alpha)\gamma^2 I \end{bmatrix} \succeq 0\,.
\end{array}
\end{align}
Note that this optimization problem is affine in $\alpha$ when $\gamma$ is fixed. Hence, in practice we can find the optimal value of $\alpha$ as well.  A static controller can then be extracted from this optimization problem by setting $K=Z X^{-1}$.  A full derivation of this relaxation can be found in Appendix~\ref{appendix:common_lyap}.  Note that this compact SDP is simpler to solve than the truncated FIR approximation.  As demonstrated experimentally in the following section, the cost of this simplification is that the common Lyapunov approach provides a controller with slightly higher LQR cost.


\section{Numerical Experiments}    
\label{sec:experiments}  

We illustrate our results on estimation, controller synthesis, and LQR performance with numerical experiments of the end-to-end Coarse-ID control scheme. The least squares estimation procedure~\eqref{eq:ls_problem} is carried out on a simulated system in Python, and the bootstrapped error estimates are computed in parallel using PyWren~\cite{Pywren}.

All of the synthesis and performance experiments are run in MATLAB.  We make use of the YALMIP package for prototyping convex optimization~\cite{Lofberg2004} and use the MOSEK solver under an academic license~\cite{mosek}.   In particular, when using the FIR approximatsion described in Section~\ref{sec:approx-FIR}, we find it effective to make use of YALMIP's \texttt{dualize} function, which considerably reduces the computation time.

\subsection{Estimation of Example System}
We focus experiments on a particular example system. Consider the LQR problem instance specified by
\begin{align} \label{eq:exampledynamics}
\trueA  = \begin{bmatrix} 1.01 & 0.01 & 0\\
0.01 & 1.01 & 0.01\\
0 & 0.01 & 1.01\end{bmatrix}, ~~ \trueB = I, ~~ Q = 10^{-3} I, ~~ R =I \:.
\end{align}

The dynamics correspond to a marginally unstable graph Laplacian system where adjacent nodes are weakly connected,  each node receives direct input, and input size is penalized relatively more than state.  Dynamics described by graph Laplacians arise naturally in consensus and distributed averaging problems. For this system, we perform the full data identification procedure in \eqref{eq:ls_problem}, using inputs with variance $\sigma_u^2=1$ and noise with variance $\sigma_w^2=1$. The errors are estimated via the bootstrap (Algorithm \ref{alg:bootstrap}) using $M=2,000$ trials and confidence parameter $\delta = 0.05$. 

The behavior of the least squares estimates and the bootstrap error estimates are illustrated in Figure \ref{fig:eps_v_rollout}. The rollout length is fixed to $T=6$, and the number of rollouts used in the estimation is varied. As expected, increasing the number of rollouts corresponds to decreasing errors. For large enough $N$, the bootstrapped error estimates are of the same order of magnitude as the true errors. In Appendix~\ref{app:eps_v_trial_figs} we show plots for the setting in which the number of rollouts is fixed to $N=6$ while the rollout length is varied.

\begin{figure}[h!]
\centering
\begin{subfigure}[b]{\basefigwidth\textwidth}
\caption{\small Least Squares Estimation Errors}
\centerline{\includegraphics[width=\columnwidth]{./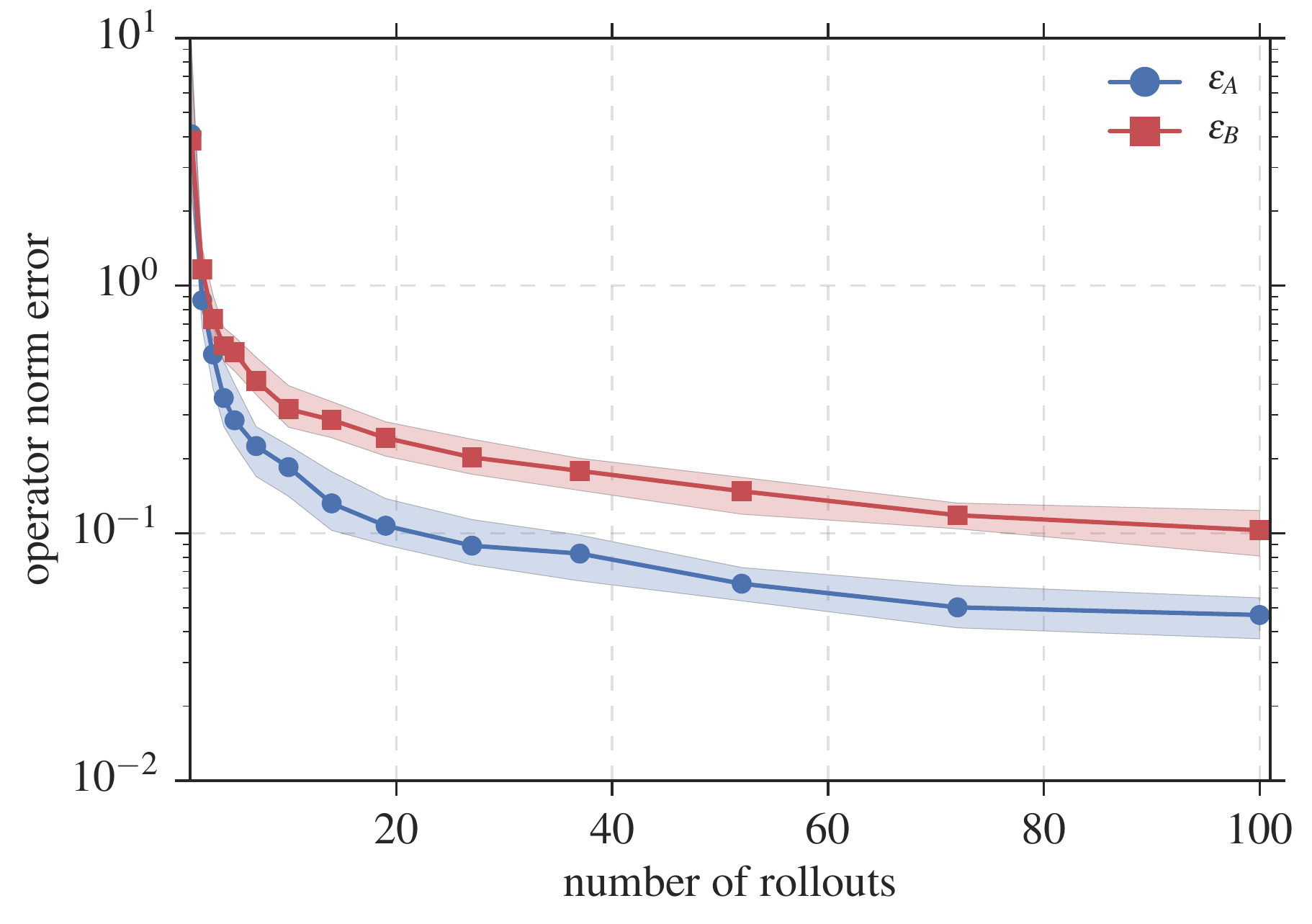}}
\end{subfigure}
\begin{subfigure}[b]{\basefigwidth\textwidth}
\caption{\small Accuracy of Bootstrap Error Estimates}
\centerline{\includegraphics[width=\columnwidth]{./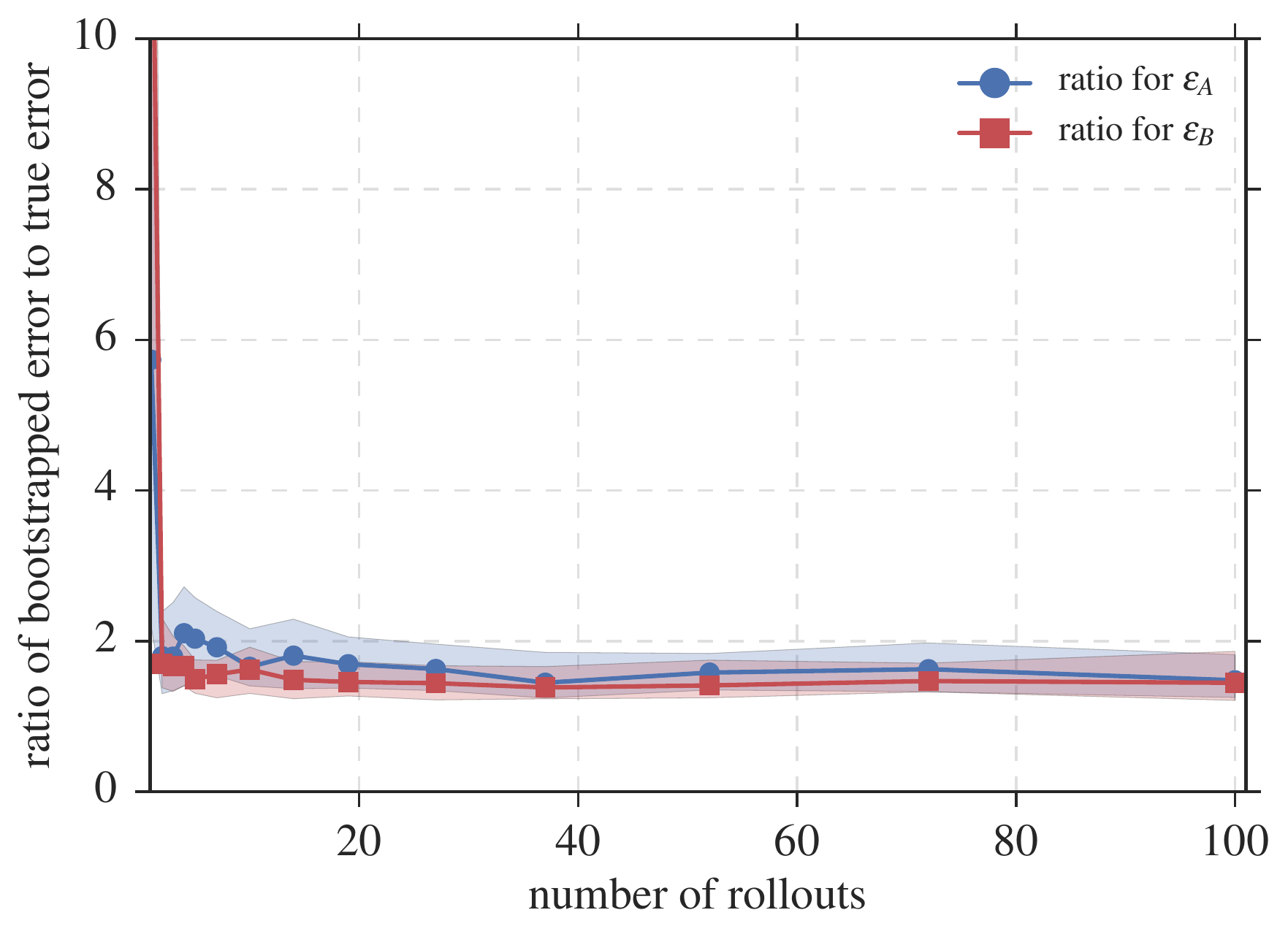}}
\end{subfigure}
\caption{\small The resulting errors from 100 repeated least squares identification experiments with rollout length $T=6$ is plotted against the number of rollouts. In (a), the median of the least squares estimation errors decreases with $N$. In (b), the ratio of the bootstrap estimates to the true estimates hover at 2. Shaded regions display quartiles.}
\label{fig:eps_v_rollout}
\end{figure}

\subsection{Controller Synthesis on Estimated System}
\label{sec:synthesis-experiments}

Using the estimates of the system in~\eqref{eq:exampledynamics}, we synthesize controllers using two robust control schemes: the convex problem in~\ref{eq:robustFIRbnd} with filters of length $L=32$ and $V$ set to $0$, and the common Lyapunov (CL) relaxation of the static synthesis problem~\eqref{eq:robustLQRbnd-static}.
Once the FIR responses $\{ \Phi_x(k) \}_{k=1}^{F}$ and
$\{ \Phi_u(k) \}_{k=1}^{F}$ are found, we need a way to implement the system responses as a controller.
We represent the dynamic controller $\tf K = \tf \Phi_u \tf \Phi_x^{-1}$ by finding
an equivalent state-space realization $(A_K, B_K, C_K, D_K)$ via Theorem 2 of \cite{anderson17}. In what follows, we compare the performance of these controllers with the nominal LQR controller (the solution to \eqref{eq:lqr-classic} with $\Ah$ and $\Bh$ as model parameters), and explore the trade-off between robustness, complexity, and performance.

The relative performance of the nominal controller is compared with robustly synthesized controllers in Figure \ref{fig:perf_v_rollout}. For both robust synthesis procedures, two controllers are compared: one using the true errors on $\trueA$ and $\trueB$, and the other using the bootstrap estimates of the errors. The robust static controller generated via the common Lyapunov approximation performs slightly worse than the more complex FIR controller, but it still achieves reasonable control performance.  Moreover, the conservative bootstrap estimates also result in worse control performance, but the degradation of performance is again modest.

Furthermore, the experiments show that the nominal controller often outperforms the robust controllers \emph{when it is stabilizing.} On the other hand, the nominal controller is not guaranteed to stabilize the true system, and as shown in Figure \ref{fig:perf_v_rollout}, it only does so in roughly 80 of the 100 instances after $N=60$ rollouts. It is also important to note a distinction between stabilization for nominal and robust controllers. When the nominal controller is not stabilizing, there is no indication to the user (though sufficient conditions for stability can be checked using our result in Corollary~\ref{lemma:robust-sls} or structured singular value methods~\cite{qiu1995formula}). On the other hand, the robust synthesis procedure will return as infeasible, alerting the user by default that the uncertainties are too high. We observe similar results when we fix the number of trials but vary the rollout length.  These figures are provided in Appendix~\ref{app:eps_v_trial_figs}.

Figure \ref{fig:FIR} explores the trade-off between performance and complexity for the computational approximations, both for FIR truncation and the common Lyapunov relaxation. We examine the tradeoff both in terms of the bound on the LQR cost (given by the value of the objective) as well as the actual achieved value. It is interesting that for smaller numbers of rollouts (and therefore larger uncertainties), the benefit of using more complex FIR models is negligible, both in terms of the actual costs and the upper bound. This trend makes sense: as uncertainties decrease to zero, the best robust controller should approach the nominal controller, which is associated with infinite impulse response (IIR) transfer functions. Furthermore, for the experiments presented here, FIR length of $L = 32$ seems to be sufficient to characterize the performance of the robust synthesis procedure in~\eqref{eq:robustLQRbnd}. Additionally, we note that static controllers are able to achieve costs of a similar magnitude.

The SLS framework guarantees a stabilizing controller for the true system provided that the computational approximations are feasible for \emph{any} value of $\gamma$ between 0 and 1, as long as the system errors $(\epsilon_A,\epsilon_B)$ are upper bounds on the true errors. Figure \ref{fig:perf_v_rollout_relaxed} displays the controller performance for robust synthesis when $\gamma$ is set to 0.999. Simply ensuring a stable model and neglecting to optimize the nominal cost yields controllers that perform nearly an order of magnitude better than those where we search for the optimal value of $\gamma$.  This observation aligns with common practice in robust control: constraints ensuring stability are only active when the cost tries to drive the system up against a safety limit.  We cannot provide end-to-end sample complexity guarantees for this method and leave such bounds as an enticing challenge for future work.

\begin{figure}[h!]
\centering
\begin{subfigure}[b]{\basefigwidth\textwidth}
\caption{\small LQR Cost Suboptimality}
\centerline{\includegraphics[width=\columnwidth]{./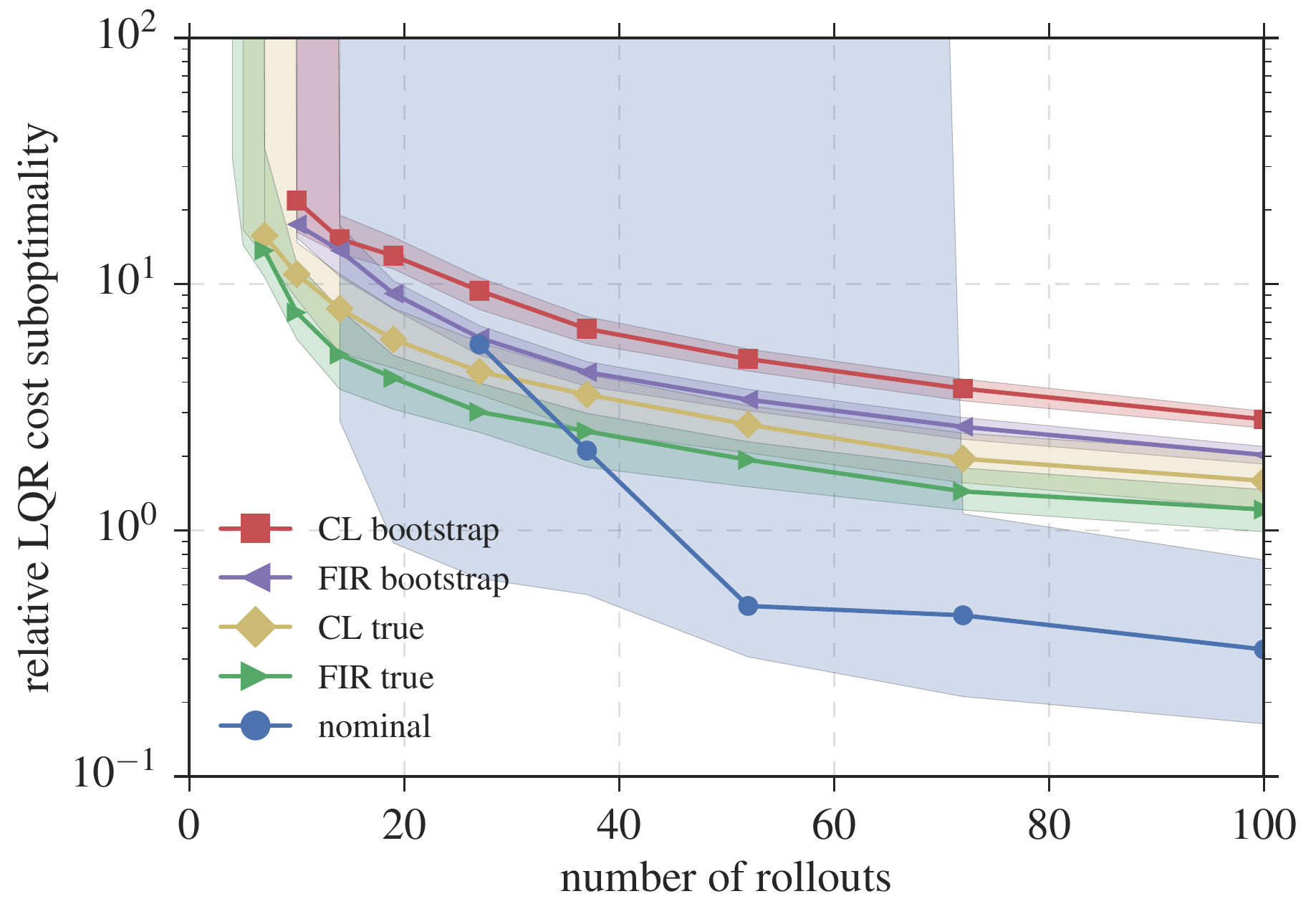}}
\end{subfigure}
\begin{subfigure}[b]{\basefigwidth\textwidth}
\caption{\small Frequency of Finding Stabilizing Controller}
\centerline{\includegraphics[width=\columnwidth]{./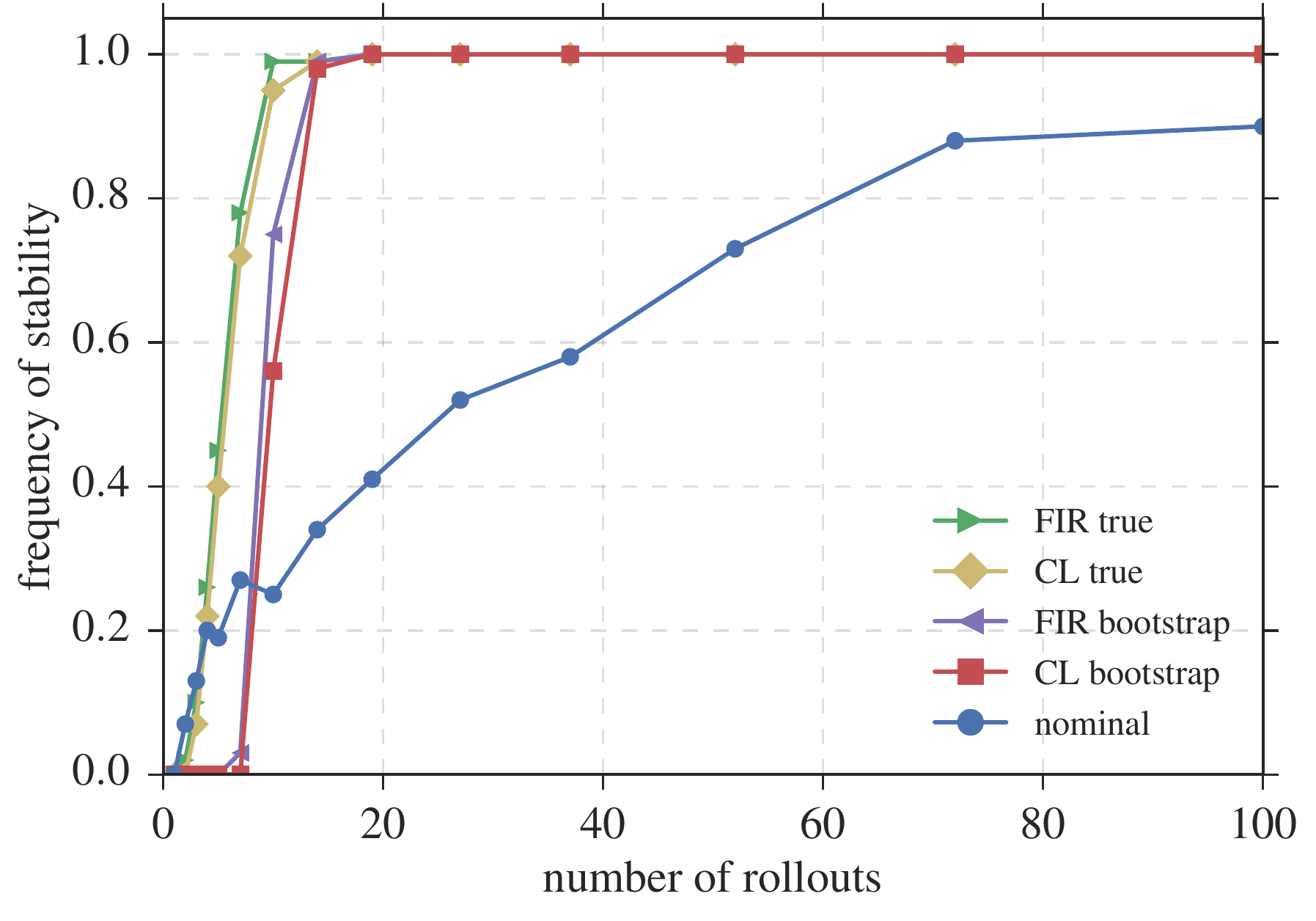}}
\end{subfigure}
\caption{\small The performance of controllers synthesized on the results of the 100 identification experiments is plotted against the number of rollouts. Controllers are synthesis nominally, using FIR truncation, and using the common Lyapunov (CL) relaxation. In (a), the median suboptimality of nominal and robustly synthesized controllers are compared, with shaded regions displaying quartiles, which go off to infinity in the case that a stabilizing controller was not found. In (b), the frequency that the synthesis methods found stabilizing controllers.}
\label{fig:perf_v_rollout}
\end{figure}

\begin{figure}[h!]
\centering

\begin{subfigure}[b]{\basefigwidth\textwidth}
\caption{\small LQR Cost Suboptimality Bound}
\centerline{\includegraphics[width=\columnwidth]{./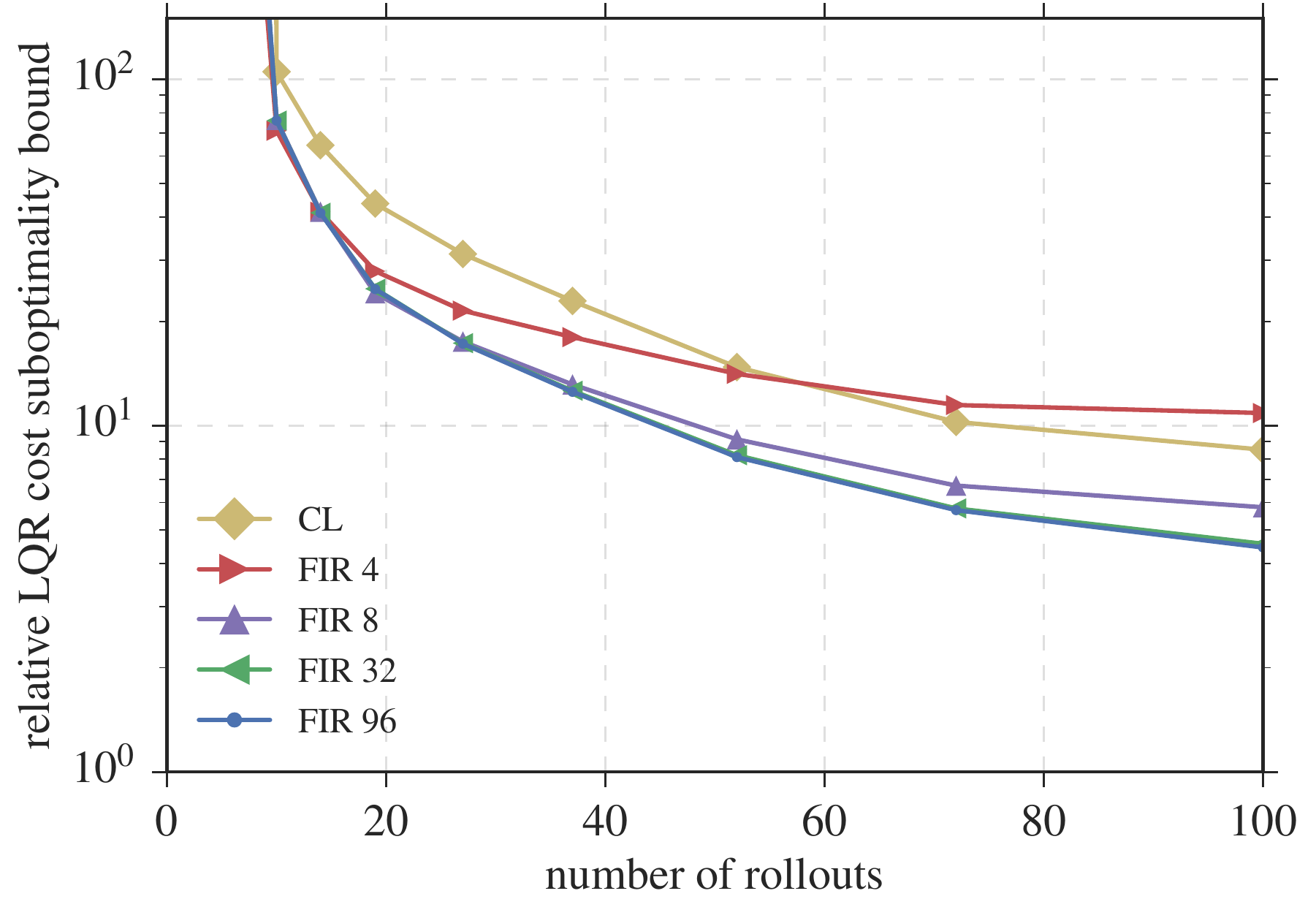}}
\end{subfigure}
\begin{subfigure}[b]{\basefigwidth\textwidth}
\caption{\small LQR Cost Suboptimality}
\centerline{\includegraphics[width=\columnwidth]{./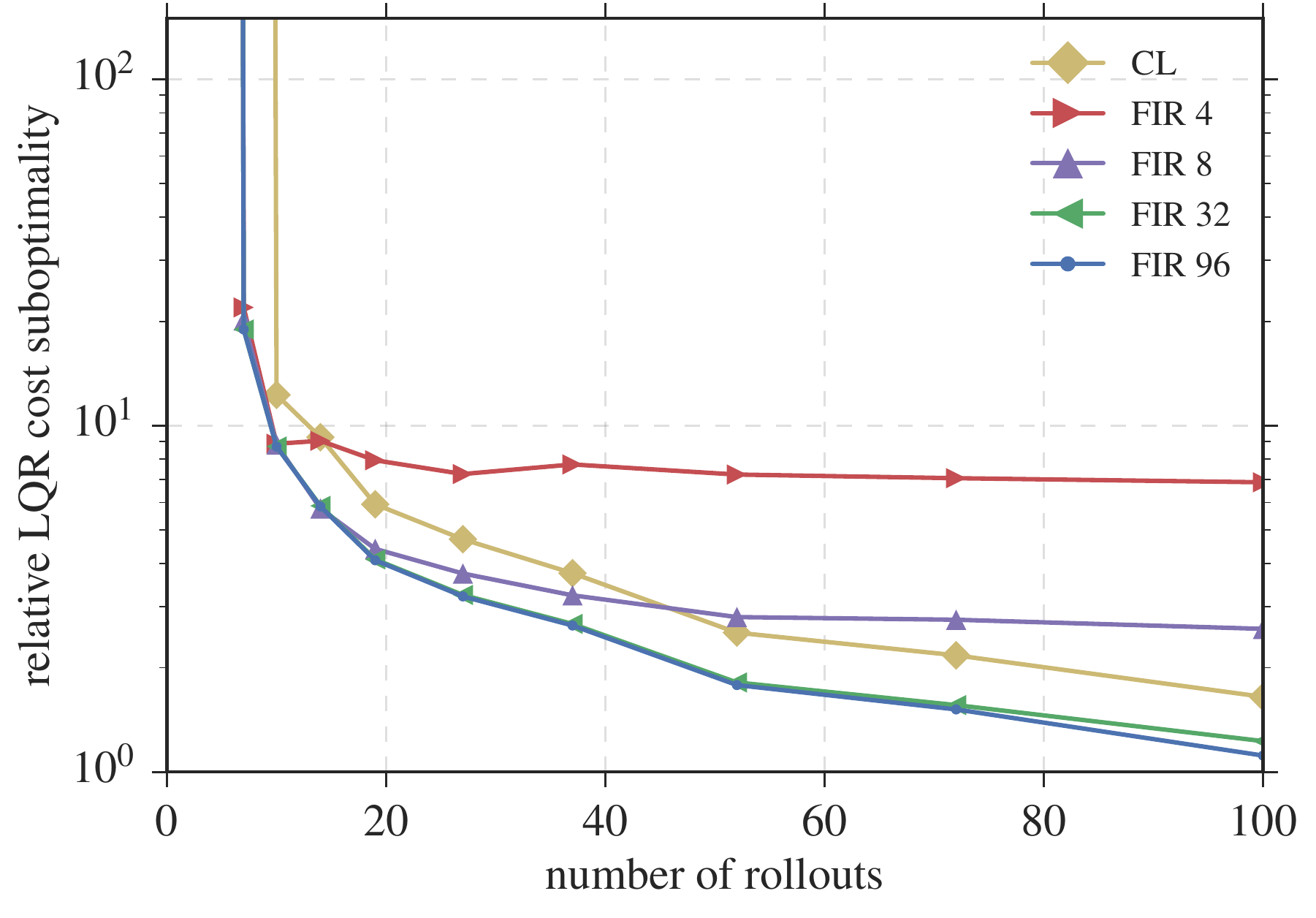}}
\end{subfigure}
\caption{\small The performance of controllers synthesized with varying FIR filter lengths on the results of 10 of the identification experiments using true errors. The median suboptimality of robustly synthesized controllers does not appear to change for FIR lengths greater than 32, and the common Lyapunov (CL) synthesis tracks the performance in both upper bound and actual cost.}
\label{fig:FIR}
\end{figure}

\begin{figure}[h!]
\centering
\begin{subfigure}[b]{\basefigwidth\textwidth}
\centerline{\small LQR Cost Suboptimality}

\centerline{\includegraphics[width=\columnwidth]{./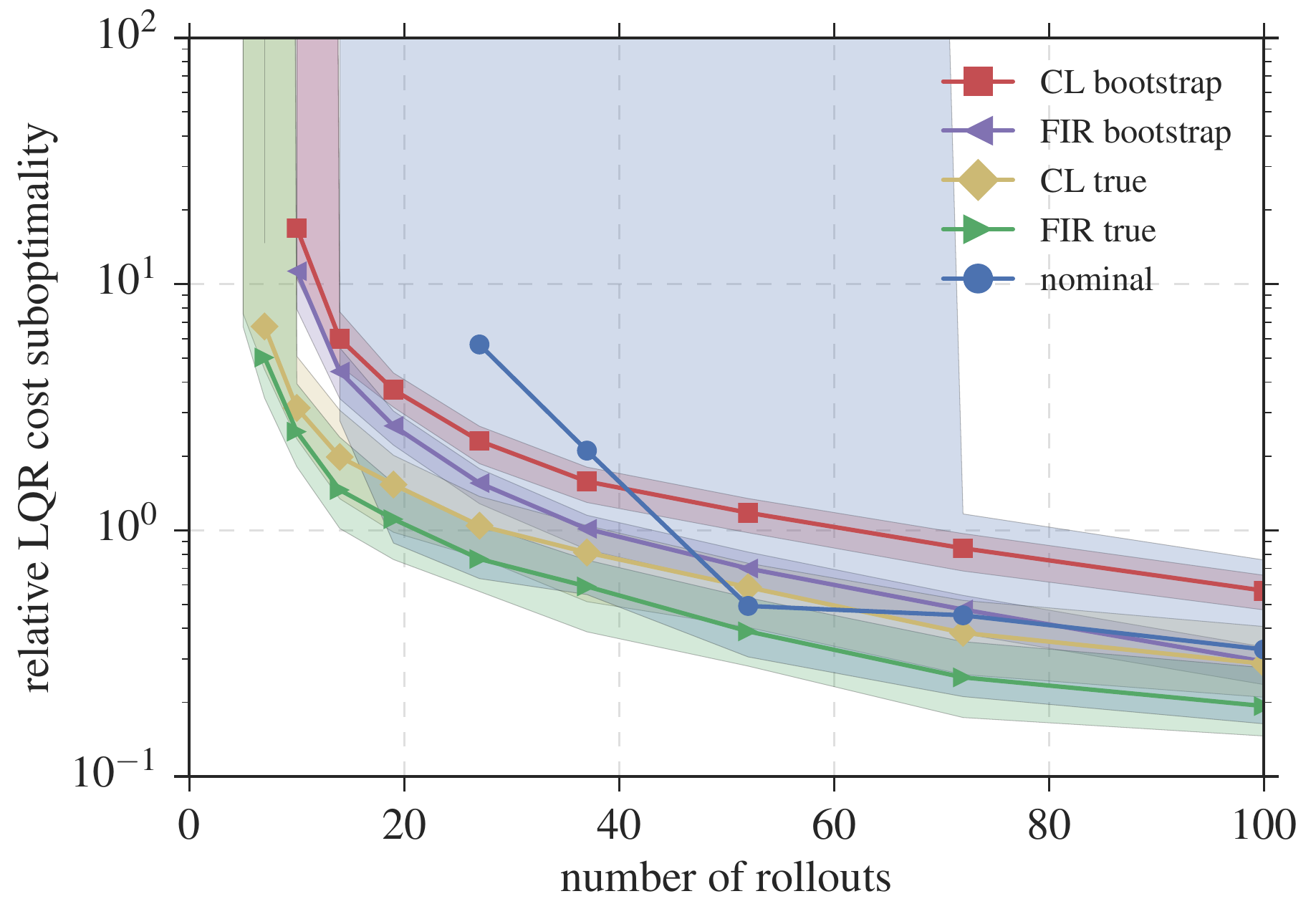}}
\end{subfigure}
\caption{\small The performance of controllers synthesized on the results of 100 identification experiments is plotted against the number of rollouts. The plot compares the median suboptimality of nominal controllers with fixed-$\gamma$ robustly synthesized controllers ($\gamma = 0.999$).}
\label{fig:perf_v_rollout_relaxed}
\end{figure}


\section{Conclusions and Future Work} 
\label{sec:conclusions}
Coarse-ID control provides a straightforward approach to merging nonasymptotic methods from system identification with contemporary Systems Level Synthesis approaches to robust control.  Indeed, many of the principles of Coarse-ID control were well established in the 90s~\cite{chen00,chen93,helmicki91}, but fusing together an end-to-end result required contemporary analysis of random matrices and a new perspective on controller synthesis.  These results can be extended in a variety of directions, and we close this paper with a discussion of some of the short-comings of our approach and of several possible applications of the Coarse-ID framework to other control settings.

\paragraph{Other performance metrics.} Though we focused exclusively on LQR in this paper, we note that all of our results on robust synthesis and end-to-end performance analysis extend to other metrics popular in control. Indeed, \emph{any} norm on the system responses $\{\Phi_x(k),\Phi_u(k)\}$ can be solved robustly using our approach; Lemma~\ref{lemma:robust-sls} holds for any norm. In turn, we can mimic the derivation in Section~\ref{sec:robust} to yield a constrained optimization problem with respect to the nominal dynamics and a norm on the uncertainty $\tf{\hat\Delta}$. This means that our suboptimality bound in Corollary~\ref{coro:lqr_cost_iid} holds true when we replace $\htwo(\mathbb{T})$ with $\hinf(\mathbb{T})$. Furthermore, similar results can be derived for other norms, so long as care is placed on the associated submultiplicative properties of the norms in question. For example, in follow up work we analyze robustness under the $\mathcal{L}_1$ norm in the context of 
constraints on the states and control signals~\cite{dean18safe}.

\paragraph{Improving the end-to-end analysis.} There are several places where
our analysis could be substantially improved.  The most obvious is that
in our estimator for the
state-transition matrices, our algorithm only uses the final time step of each
rollout.  This strategy is data inefficient, and empirically,
accuracy only improves when including all of the data.  Analyzing the full
least squares estimator is non-trivial because the design
matrix strongly depends on data to be estimated.  This poses a
challenging problem in random matrix theory that has applications in a
variety of control and reinforcement learning settings. 
In follow up work we have begun to address this issue for stable linear systems \cite{simchowitz18}.

In the context of SLS, we use a very coarse characterization of the plant uncertainty to bound the quantity in Lemma~\ref{lemma:robust-sls} and to yield a tractable optimization problem.  Indeed, the only property we use about the error between our nominal system and the true system is that the maps
\[
	x\mapsto (A-\hat{A})x~~\mbox{and}~~u\mapsto (B-\hat{B})u
\]
are contractions.  Nowhere do we use the fact that these are linear operators, or even the fact that they are the same operator from time-step to time-step.  Indeed, there are stronger bounds that could be engineered using the theory of Integral Quadratic Constraints~\cite{megretski1997system} that would take into account these additional properties.  Such tighter bounds could yield considerably less conservative control schemes in both theory and practice.

Additionally, it would be of interest to understand the loss in performance incurred by the common Lyapunov relaxation we use in our experiments.  Empirically, we see that the approximation leads to good performance, suggesting that it does not introduce much conservatism into the synthesis task. Further, our numerical experiments suggest that optimizing a nominal cost subject to robust stability constraints, as opposed to directly optimizing the SLS upper bound, leads to better empirical performance.  Future work will seek to understand whether this is a phenomenological observation specific to the systems used in our experiments, or if there is a deeper principle at play that leads to tighter sub-optimality guarantees.

\paragraph{Lower bounds.} Finding lower bounds for control problems when the model is unknown is an open question. Even for LQR, it is not at all clear how well the system $(A,B)$ needs to be known in order to attain good control performance.  While we produce reasonable worst-case upper bounds for this problem, we know of no lower bounds.  Such bounds would offer a reasonable benchmark for how well one could ever expect to do with no priors on the linear system dynamics.

\paragraph{Integrating Coarse-ID control in other control paradigms.} The
end-to-end Coarse-ID control framework should be applicable in a variety of settings.  For example, in Model Predictive Control (MPC), controller synthesis
problems are approximately solved on finite time horizons, one step is taken,
and then this process is repeated~\cite{BorrelliMPCBook}.  MPC is an effective
solution which substitutes fast optimization solvers for clever, complex
control design.  We believe it will be straightforward to extend the
Coarse-ID paradigm to MPC, using a similar perturbation argument as in
Section~\ref{sec:robust}.    The main challenges
in MPC lie in how to guarantee that safety constraints are maintained
throughout execution without too much conservatism in control costs.

Another interesting investigation lies in the area of adaptive control, where we
could investigate how to incorporate new data into coarse models to further refine
constraint sets and costs.  Indeed, some work has already been done in this
space. We propose to investigate
how to operationalize and extend the notion of \emph{optimistic exploration}
proposed in the context of continuous control by Abbasi-Yadkori and
Szepesvari~\cite{abbasi2011regret}.  The idea behind optimistic improvement is
to select the model that would give the best optimization cost if the current
model was true.  In this way, we fail fast, either receiving a good cost or learning
quickly that our model is incorrect.
It would be
worth investigating whether the Coarse-ID framework can make it simple to
update a least squares estimate for the system parameters and then provide an
efficient mechanism for choosing the next optimistic control.

Finally, Coarse-ID control could be relevant to nonlinear control applications.  In nonlinear control, iterative LQR
schemes are remarkably effective~\cite{li2004iterative}.  Hence, it would be
interesting to understand how parametric model errors can be estimated and
mitigated in a control loop that employs iterative LQR or similar dynamic
programming methods.

\paragraph{Sample complexities of reinforcement learning for continuous control.}
Finally, we imagine that the analysis in this paper may be useful for
understanding popular reinforcement learning algorithms that are also being
tested for continuous control.  Reinforcement learning directly attacks a
control cost in question without resorting to any specific identification
scheme.  While this suffers from the drawback that generally speaking, no
parameter convergence can be guaranteed, it is ideally suited to ignoring modes
that do not affect control performance.  For instance, it might not be
important to get a good estimate of very stable modes or of lightly damped
modes that do not substantially affect the performance.

There are two parallel problems here.  First, it would be of interest to determine system identification algorithms that are tuned to particular control tasks.  In the Coarse-ID control approach, the estimation and control are completely decoupled.  However, it may be beneficial to inform the identification algorithm about the desired cost, resulting in improved sample complexity.

From a different perspective, Policy Gradient and Q-Learning methods applied to LQR could yield important insights about the pros and cons of such methods.  There are classic papers~\cite{bradtke1994adaptive} on Q-Learning for LQR, but these use asymptotic analysis.  Recently, the first such analysis for Policy Gradient has appeared, though the precise scaling with respect to system parameters is not yet understood~\cite{Fazel18}.  Providing clean nonasymptotic bounds here could help provide a rapprochement between machine learning and adaptive control, with optimization negotiating the truce.

\section*{Acknowledgements}
We thank Ross Boczar, Qingqing Huang, Laurent Lessard, Michael Littman, Manfred Morari, Andrew Packard, Anders Rantzer, Daniel Russo, and Ludwig Schmidt for many helpful comments and suggestions. We also thank the anonymous referees for making several suggestions that have significantly improved the paper and its presentation. SD is supported by an NSF Graduate Research Fellowship under Grant No.  DGE 1752814.  NM is generously funded by grants from the AFOSR and NSF, and by gifts from Huawei and Google. BR is generously supported by NSF award CCF-1359814, ONR awards N00014-14-1-0024 and N00014-17-1-2191, the DARPA Fundamental Limits of Learning (Fun LoL) Program,  a Sloan Research Fellowship, and a Google Faculty Award.

\begin{small}
\bibliographystyle{abbrvnat}  
\bibliography{lqr} 
\end{small} 

\appendix
\section{Proof of Lemma~\ref{lemma:product_gaussians}}
First, recall Bernstein's lemma.
Let $X_1, ..., X_p$ be zero-mean independent r.v.s
satisfying the Orlicz norm bound $\norm{X_i}_{\psi_1} \leq K$.
Then as long as $p \geq 2 \log(1/\delta)$, with probability at least $1-\delta$,
\begin{align*}
  \sum_{i=1}^{p} X_i \leq K \sqrt{2n \log(1/\delta)} \:.
\end{align*}
Next, let $Q$ be an $m \times n$ matrix.  Let $u_1, ..., u_{M_\varepsilon}$ be a
$\varepsilon$-net for the $m$-dimensional $\ell_2$ ball, and similarly let
$v_1, ..., v_{N_\varepsilon}$ be a $\varepsilon$ covering for the $n$-dimensional $\ell_2$ ball.
For each $\norm{u}_2=1$ and $\norm{v}_2=1$, let $u_i$, $v_j$ denote the
elements in the respective nets such that $\norm{u - u_i}_2 \leq \varepsilon$
and $\norm{v - v_j}_2 \leq \varepsilon$.  Then,
\begin{align*}
  u^\T Q v &= (u - u_i + u_i)^\T Q v = (u - u_i)^\T Q v + u_i^\T Q ( v - v_j + v_j )  \\
  &= (u - u_i)^\T Q v + u_i^\T Q ( v - v_j ) + u_i^\T Q v_j \:.
\end{align*}
Hence,
\begin{align*}
  u^\T Q v \leq 2 \varepsilon \norm{Q}_2 + u_i^\T Q v_j \leq  2 \varepsilon \norm{Q}_2 +\max_{1 \leq i \leq M_\varepsilon, 1 \leq j \leq N_\varepsilon} u_i^\T Q v_j  \:.
\end{align*}
Since $u,v$ are arbitrary on the sphere,
\begin{align*}
  \norm{Q}_2  \leq \frac{1}{1-2\varepsilon} \max_{1 \leq i \leq M_\varepsilon, 1 \leq j \leq N_\varepsilon} u_i^\T Q v_j \:.
\end{align*}
Now we study the problem at hand.  Choose $\varepsilon = 1/4$.
By a standard volume comparison argument,
we have that $M_\varepsilon \leq
9^m$ and $N_\varepsilon \leq 9^n$, and that
\begin{align*}
  \bignorm{ \sum_{k=1}^{N} f_k g_k^\T }_2 \leq 2 \max_{1 \leq i \leq M_\varepsilon, 1 \leq j \leq N_\varepsilon} \sum_{k=1}^{N} (u_i^\T f_k) (g_k^\T v_j) \:.
\end{align*}
Note that $u_i^\T f_k \sim N(0, u_i^\T \Sigma_f u_i)$ and $g_k^\T v_j \sim N(0,
v_j^\T \Sigma_g v_j)$.  By independence of $f_k$ and $g_k$, $(u_i^\T
f_k)(g_k^\T v_j)$ is a zero mean sub-Exponential random variable, and therefore $\norm{(u_i^\T
f_k)(g_k^\T v_j)}_{\psi_1} \leq \sqrt{2}
  \norm{\Sigma_f}_2^{1/2} \norm{\Sigma_g}_2^{1/2}$.
Hence, for each pair $u_i, v_j$ we have with probability at least $1 - \delta/9^{m+n}$,
\begin{align*}
  \sum_{k=1}^{N} (u_i^\T f_k) (g_k^\T v_j) \leq 2\norm{\Sigma_f}_2^{1/2}\norm{\Sigma_g}_2^{1/2} \sqrt{N (m + n) \log(9/\delta)} \:.
\end{align*}
Taking a union bound over all pairs in the $\varepsilon$-net yields the claim.

\section{Proof of Proposition~\ref{prop:data_dependent}}
\label{app:data_dependent}

For this proof we need a lemma similar to Lemma~\ref{lemma:product_gaussians}. The following is a standard result in high-dimensional statistics \cite{wainwright2019high}, and we state it here without proof.

\begin{lemma}
\label{lem:operator_norm}
Let $W \in \R^{N \times \statedim}$ be a matrix with each entry i.i.d. $\Ncal(0, \sigma_w^2)$. Then, with probability $1 - \delta$, we have
\begin{align*}
\|W\|_2 \leq \sigma_w(\sqrt{N} + \sqrt{\statedim} + \sqrt{2 \log(1/\delta)}).
\end{align*}
\end{lemma}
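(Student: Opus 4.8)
The plan is to establish the sharp constant $\sqrt{N} + \sqrt{\statedim} + \sqrt{2\log(1/\delta)}$ by combining an expectation bound with Gaussian concentration, rather than the $\varepsilon$-net argument used for Lemma~\ref{lemma:product_gaussians} (a net argument would only yield a bound of the form $\sqrt{N} + c\sqrt{\statedim}$ with $c > 1$, losing the clean additive form). By homogeneity I would first reduce to the standard case: writing $W = \sigma_w W_0$ with $W_0 \in \R^{N\times\statedim}$ having i.i.d. $\Ncal(0,1)$ entries, it suffices to prove $\spectralnorm{W_0} \leq \sqrt{N} + \sqrt{\statedim} + \sqrt{2\log(1/\delta)}$ with probability at least $1-\delta$ and then multiply through by $\sigma_w$.

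For the expectation bound I would view the operator norm as the supremum of a Gaussian process. Writing $\spectralnorm{W_0} = \sup_{u \in S^{N-1},\, v \in S^{\statedim-1}} X_{u,v}$ with $X_{u,v} := u^\T W_0 v$, each $X_{u,v}$ is centered Gaussian, and a direct computation gives the increment $\E[(X_{u,v} - X_{u',v'})^2] = \|uv^\T - u'v'^\T\|_F^2 = 2 - 2(u^\T u')(v^\T v')$. I would compare this against the decoupled process $Y_{u,v} := \langle g, u\rangle + \langle h, v\rangle$ with $g \sim \Ncal(0, I_N)$ and $h \sim \Ncal(0, I_\statedim)$ independent, for which $\E[(Y_{u,v} - Y_{u',v'})^2] = \|u-u'\|_2^2 + \|v-v'\|_2^2 = 4 - 2u^\T u' - 2v^\T v'$. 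The key algebraic check is that $\E[(X_{u,v}-X_{u',v'})^2] \leq \E[(Y_{u,v}-Y_{u',v'})^2]$, which rearranges to $(1 - u^\T u')(1 - v^\T v') \geq 0$ and hence holds because $u^\T u', v^\T v' \leq 1$ on the spheres. Since only the increment condition is needed, the Sudakov--Fernique inequality yields $\E\spectralnorm{W_0} = \E\sup_{u,v} X_{u,v} \leq \E\sup_{u,v} Y_{u,v} = \E\|g\|_2 + \E\|h\|_2 \leq \sqrt{N} + \sqrt{\statedim}$, where the last step is Jensen's inequality, $\E\|g\|_2 \leq (\E\|g\|_2^2)^{1/2} = \sqrt{N}$.

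To upgrade this to a high-probability statement I would invoke the concentration of Lipschitz functions of Gaussian vectors. Regarding $W_0$ as a vector in $\R^{N\statedim}$ equipped with the Euclidean (Frobenius) norm, the map $W_0 \mapsto \spectralnorm{W_0}$ is $1$-Lipschitz, since $|\spectralnorm{W_0} - \spectralnorm{W_0'}| \leq \spectralnorm{W_0 - W_0'} \leq \|W_0 - W_0'\|_F$ by the triangle inequality and the domination of the operator norm by the Frobenius norm. The Gaussian concentration inequality then gives $\Pr(\spectralnorm{W_0} \geq \E\spectralnorm{W_0} + t) \leq e^{-t^2/2}$; setting $t = \sqrt{2\log(1/\delta)}$ and combining with the expectation bound from the previous paragraph yields, with probability at least $1-\delta$, $\spectralnorm{W_0} \leq \sqrt{N} + \sqrt{\statedim} + \sqrt{2\log(1/\delta)}$. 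Rescaling by $\sigma_w$ completes the proof.

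The routine parts are the two increment computations, Jensen's inequality, and the Lipschitz estimate; the one step that requires care --- and the main conceptual obstacle --- is recognizing that the differing diagonal variances ($\E X_{u,v}^2 = 1$ but $\E Y_{u,v}^2 = 2$) force the use of Sudakov--Fernique rather than Slepian's inequality, together with verifying the increment inequality in the clean factored form $(1 - u^\T u')(1 - v^\T v') \geq 0$.
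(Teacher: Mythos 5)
Your proof is correct, and it matches the situation in the paper exactly: the paper states this lemma without proof, citing it as a standard result from high-dimensional statistics (Wainwright), and the argument you give --- Sudakov--Fernique comparison against the decoupled process $\langle g,u\rangle + \langle h,v\rangle$ to get $\E\spectralnorm{W_0} \leq \sqrt{N}+\sqrt{\statedim}$, followed by Gaussian concentration for the $1$-Lipschitz map $W_0 \mapsto \spectralnorm{W_0}$ --- is precisely the canonical proof that the citation points to. All the key steps check out, including the increment inequality in the factored form $(1-u^\T u')(1-v^\T v')\geq 0$ and your correct observation that the mismatched diagonal variances rule out Slepian and force Sudakov--Fernique.
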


As before we use $Z$ to denote the $N \times (\statedim + \inputdim)$ matrix with rows equal to $z_\ell^\top = \begin{bmatrix}
(x^{(\ell)})^\top & (u^{(\ell)})^\top
\end{bmatrix}$. Also, we denote by $W$ the $N \times \statedim$ matrix with columns equal to $w^{(\ell)}$. Therefore, the error matrix for the ordinary least squares estimator satisfies
\begin{align*}
E = \begin{bmatrix}
(\Ahat - A)^\top \\
(\Bhat - B)^\top
\end{bmatrix} = (Z^\top Z)^{-1} Z^\top W,
\end{align*}
when the matrix $Z$ has rank $\statedim + \inputdim$. Under the assumption that $N \geq \statedim + \inputdim$ we consider the singular value decomposition  $Z = U \Lambda V^\top$, where $V, \Lambda \in \R^{(\statedim + \inputdim) \times (\statedim + \inputdim) }$ and $U \in \R^{N \times (\statedim + \inputdim)}$. Therefore, when $\Lambda$ is invertible,
\begin{align*}
E = V (\Lambda^\top \Lambda)^{-1} \Lambda^\top U^\top W = V \Lambda^{-1} U^\top W.
\end{align*}

This implies that
\begin{align*}
E E^\top &= V \Lambda^{-1} U^\top W W^\top U \Lambda^{-1} V^\top \preceq \| U^\top W\|_2^2 V \Lambda^{-2} V^\top= \| U^\top W\|_2^2 (Z^\top Z)^{-1}.
\end{align*}

Since the columns of $U$ are orthonormal, it follows that the entries of $U^\top W$ are i.i.d. $\Ncal(0, \sigma_w^2)$. Hence, the conclusion follows by Lemma~\ref{lem:operator_norm}.

\section{Derivation of the LQR cost as an $\htwo$ norm}\label{app:h2}
In this section, we consider the transfer function description of the infinite horizon LQR optimal control problem. In particular,
we show how it can be recast as an equivalent $\mathcal{H}_2$ optimal control problem in terms of the system response variables defined in
Theorem \ref{thm:param}.

Recall that stable and achievable system responses $(\tf \Phi_x,\tf \Phi_u)$, as characterized in equation \eqref{eq:achievable}, describe the closed-loop map from disturbance signal $\tf w$ to the state and control action $(\tf x, \tf u)$ achieved by the controller $\tf K = \tf \Phi_u \tf \Phi_x^{-1}$, i.e.,
\begin{equation*}
\begin{bmatrix} \tf x \\ \tf u \end{bmatrix} = \begin{bmatrix} \tf \Phi_x \\ \tf \Phi_u \end{bmatrix} \tf w.
\end{equation*}
Letting $\tf \Phi_x  = \sum_{t=1}^\infty \Phi_x(t) z^{-t}$ and $\tf \Phi_u = \sum_{t=1}^\infty \Phi_u(t) z^{-t}$, we can then equivalently write for any $t \geq 1$
\begin{equation}
\begin{bmatrix}  x_t \\ u_t \end{bmatrix} = \sum_{k=1}^t\begin{bmatrix}  \Phi_x(k) \\  \Phi_u(k) \end{bmatrix}  w_{t-k}.
\label{eq:time_response}
\end{equation}
For a disturbance process distributed as $w_t \iid \mathcal{N}(0,\sigma_w^2 I_\statedim)$, it follows from equation \eqref{eq:time_response} that
\begin{align*}
    \E\left[ x_t^\T Q x_t\right] &= \sigma_w^2\sum_{k=1}^t \Tr(\Phi_x(k)^\T Q\Phi_x(k)) \:, \\
    \E\left[ u_t^\T R u_t\right] &= \sigma_w^2\sum_{k=1}^t \Tr(\Phi_u(k)^\T R\Phi_u(k)) \:.
\end{align*}
We can then write
\begin{align*}
    \lim_{T\to \infty} \frac{1}{T} \sum_{t=1}^T \E\left[ x_t^\T Q x_t  +  u_t^\T R u_t\right]  &=  \sigma_w^2\left[\sum_{t=1}^\infty \Tr(\Phi_x(t)^\T Q\Phi_x(t)) + \Tr (\Phi_u(t)^\T R\Phi_u(t))\right] \\
&= \sigma_w^2\sum_{t=1}^\infty \bignorm{\begin{bmatrix} Q^\frac{1}{2} & 0 \\ 0 & R^\frac{1}{2} \end{bmatrix}\begin{bmatrix} \Phi_x(t) \\ \Phi_u(t) \end{bmatrix}}_F^2 \\
&= \frac{\sigma_w^2}{2\pi} \int_{\mathbb{T}} \bignorm{\begin{bmatrix} Q^\frac{1}{2} & 0 \\ 0 & R^\frac{1}{2} \end{bmatrix}\begin{bmatrix} \tf \Phi_x \\ \tf \Phi_u \end{bmatrix}}_F^2 \; dz  \\
&=\sigma_w^2 \bignorm{\begin{bmatrix} Q^\frac{1}{2} & 0 \\ 0 & R^\frac{1}{2} \end{bmatrix}\begin{bmatrix} \tf \Phi_x \\ \tf\Phi_u \end{bmatrix}}_{\mathcal{H}_2}^2 \:,
\end{align*}
where the second to last equality is due to Parseval's Theorem.

\section{Proof of Theorem \ref{thm:FIR_subopt}}
\label{app:FIR}

To understand the effect of restricting the optimization to FIR transfer functions we need to understand the decay of the transfer functions $\Res{\Ahat+\Bhat\trueK}$ and $\trueK \Res{\Ahat+\Bhat\trueK}$. To this end we consider $C_\star  > 0$ and $\rho_\star \in (0, 1)$ such that $\| (\trueA + \trueB \trueK)^t \|_2 \leq C_\star \rho_\star^t$ for all $t \geq 0$. Such $C_\star$ and $\rho_\star$ exist because $\trueK$ stabilizes the system $(\trueA, \trueB)$. The next lemma quantifies how well $\trueK$ stabilizes the system $(\Ahat, \Bhat)$ when the estimation error is small. 
 
\begin{lemma}
\label{lem:simple_perturbation}
Suppose $\epsilon_A + \epsilon_B \|\trueK\|_2 \leq \frac{1 - \rho_\star}{2 C_\star}$. Then, 
\begin{align*}
\| (\Ahat + \Bhat\trueK)^t \|_2 \leq C_\star \left( \frac{1 + \rho_\star}{2} \right)^t \;  \text{, for all } \; t \geq 0. 
\end{align*}
\end{lemma}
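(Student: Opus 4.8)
The plan is to reduce the statement to a standard perturbation bound on the powers of a matrix. First I would write the closed-loop matrix on the estimated system as a perturbation of the closed-loop matrix on the true system: since $\Ahat = \trueA + \Delta_A$ and $\Bhat = \trueB + \Delta_B$ with $\twonorm{\Delta_A} \leq \epsilon_A$ and $\twonorm{\Delta_B} \leq \epsilon_B$, we have
\[
\Ahat + \Bhat \trueK = (\trueA + \trueB \trueK) + \Delta, \qquad \Delta := \Delta_A + \Delta_B \trueK,
\]
so that $\twonorm{\Delta} \leq \epsilon_A + \epsilon_B \twonorm{\trueK} =: \eta$, and the hypothesis reads $\eta \leq (1-\rho_\star)/(2C_\star)$.

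Next I would expand $(\trueA + \trueB\trueK + \Delta)^t$ into the sum of all $2^t$ words in the two ``letters'' $\trueA + \trueB\trueK$ and $\Delta$. Grouping the words by the number $k$ of occurrences of $\Delta$, each such word is an alternating product of the form $(\trueA+\trueB\trueK)^{j_0}\,\Delta\,(\trueA+\trueB\trueK)^{j_1}\cdots\Delta\,(\trueA+\trueB\trueK)^{j_k}$ with $j_0 + \cdots + j_k = t-k$ and each $j_i \geq 0$. The decay hypothesis $\twonorm{(\trueA+\trueB\trueK)^j} \leq C_\star \rho_\star^j$ bounds each of the $k+1$ power blocks (note $C_\star \geq 1$, since the bound at $j=0$ forces $1 = \twonorm{I} \leq C_\star$, so it remains valid even for the empty blocks $j_i = 0$), and submultiplicativity of the spectral norm bounds the whole word by $C_\star^{k+1}\,\eta^k\, \rho_\star^{t-k}$.

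Since there are exactly $\binom{t}{k}$ words containing $k$ copies of $\Delta$, summing over $k$ and applying the binomial theorem gives
\[
\twonorm{(\Ahat + \Bhat\trueK)^t} \leq \sum_{k=0}^t \binom{t}{k} C_\star^{k+1}\,\eta^k\, \rho_\star^{t-k} = C_\star (\rho_\star + C_\star \eta)^t.
\]
Finally, the hypothesis $\eta \leq (1-\rho_\star)/(2C_\star)$ yields $C_\star\eta \leq (1-\rho_\star)/2$, whence $\rho_\star + C_\star\eta \leq (1+\rho_\star)/2$, and the claim follows.

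I expect the only delicate point to be the bookkeeping in the combinatorial expansion: correctly counting the $\binom{t}{k}$ words with a prescribed number of $\Delta$ factors and justifying the use of $C_\star$ as the bound on the empty ($j_i = 0$) power blocks. Everything else is routine submultiplicativity and a binomial sum. An induction on $t$ would be a viable alternative, but the closed-form binomial identity is cleaner and makes the appearance of the contraction factor $\rho_\star + C_\star \eta$ transparent.
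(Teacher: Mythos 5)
Your proof is correct and follows essentially the same route as the paper's: the paper also writes $\Ahat + \Bhat\trueK = (\trueA+\trueB\trueK) + \Delta$, expands the $t$-th power into $2^t$ words, bounds each word with $i$ copies of $\Delta$ by $C_\star^{i+1}\rho_\star^{t-i}\twonorm{\Delta}^i$, and sums via the binomial theorem to get $C_\star(C_\star\twonorm{\Delta}+\rho_\star)^t \leq C_\star\left(\tfrac{1+\rho_\star}{2}\right)^t$. Your explicit justification that $C_\star \geq 1$ (so the block bound holds for empty power blocks) is a detail the paper leaves implicit, but it is the same argument.
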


\begin{proof}
The claim is obvious when $t = 0$. Fix an integer $t \geq 1$ and denote $M = \trueA + \trueB \trueK$. Then, 
if $\Delta = \Delta_A + \Delta_B \trueK$, we have $\Ahat + \Bhat \trueK = M + \Delta$. 

Consider the expansion of $(M+\Delta)^t$ into $2^k$ terms.
Label all these terms as $T_{i,j}$ for $i=0, ..., t$ and $j=1, ..., {t \choose i}$
where $i$ denotes the degree of $\Delta$ in the term.
Using the fact that $\norm{M^t}_2 \leq C_\star \rho_\star^t$ for all $t \geq 0$, we have 
 $\norm{T_{i,j}}_2 \leq C^{i+1} \rho^{t-i} \norm{\Delta}_2^i$.
Hence by triangle inequality:
\begin{align*}
  \norm{(M + \Delta)^t}_2 &\leq \sum_{i=0}^{t} \sum_{j} \norm{T_{i,j}}_2 \\
  &\leq \sum_{i=0}^{t} {t \choose i} C_\star^{i+1} \rho_\star^{t-i} \norm{\Delta}^i_2 \\
  &= C_\star  \sum_{i=0}^{t} {t \choose i} (C_\star \norm{\Delta}_2)^i \rho_\star^{t-i} \\
  &= C_\star (C_\star \norm{\Delta}_2 + \rho_\star)^t \\
  &\leq C_\star \left(\frac{1 + \rho_\star}{2} \right)^t \:,
\end{align*}
where the last inequality uses the fact $\norm{\Delta}_2 \leq \epsilon_A + \epsilon_B \norm{\trueK}_2 \leq  \frac{ 1 -\rho_\star}{2C_\star}$.
\end{proof}

For the remainder of this discussion, we use the following notation to denote the restriction of a system response to its first $L$ time-steps:
\begin{equation}
\tf\Phi_{x}(1:L) = \sum_{t=1}^L \frac{1}{z^t}\Phi_x(t), \ \tf\Phi_{u}(1:L) = \sum_{t=1}^L \frac{1}{z^t}\Phi_u(t).
\label{eq:FIR_restriction}
\end{equation}

To prove Theorem \ref{thm:FIR_subopt} we must relate the optimal controller $\trueK$ with the optimal solution of the optimization problem  \eqref{eq:robustFIRbnd}. In the next lemma we use $\trueK$ to construct a feasible solution for  problem  \eqref{eq:robustFIRbnd}. As before, we denote $\zeta = (\epsilon_A + \epsilon_B \norm{\trueK}_2) \hinfnorm{\Res{\trueA + \trueB \trueK}}$.

\begin{lemma}\label{lem:FIR_feasibility}
Set $\alpha = 1/2$ in problem \eqref{eq:robustFIRbnd}, and assume that $\epsilon_A + \epsilon_B \|\trueK\|_2 \leq \frac{1 - \rho_\star}{2 C_\star}$, $\zeta < 1/5$, and 
\begin{equation}
L \geq \frac{4 \log\left(\frac{C_\star}{\zeta} \right)}{1 - \rho_\star}.
\label{eq:L-feasible}
\end{equation}
Then, optimization problem \eqref{eq:robustFIRbnd} is feasible, and the following is one such feasible solution:
\begin{equation}
\widetilde{\tf\Phi}_x = \Res{\Ah+\Bh\trueK}(1:L),~~ \widetilde{\tf \Phi}_u = \trueK\Res{\Ah+\Bh\trueK}(1:L),~~\widetilde{V}= - \Res{\Ah+\Bh\trueK}(L+1),~~\tilde \gamma = \frac{4 \zeta}{1 - \zeta}.
\end{equation}
\end{lemma}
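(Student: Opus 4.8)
The plan is to check, one constraint at a time, that the quadruple $(\tilde\gamma,\widetilde{\tf\Phi}_x,\widetilde{\tf\Phi}_u,\widetilde V)$ is feasible for \eqref{eq:robustFIRbnd} with $\alpha=1/2$. Throughout I would write $\hat M := \Ah+\Bh\trueK$, $M := \trueA+\trueB\trueK$, $\Delta := \Delta_A+\Delta_B\trueK$, $\hat\rho := (1+\rho_\star)/2$, and let $\tf\Phi_x^\infty := \Res{\hat M}$, $\tf\Phi_u^\infty := \trueK\Res{\hat M}$ be the \emph{untruncated} nominal closed-loop responses, so that $\widetilde{\tf\Phi}_x,\widetilde{\tf\Phi}_u$ are precisely their FIR truncations to the first $L$ spectral elements (and hence lie in $\frac1z\mathcal{RH}_\infty$). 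The conditions $\tilde\gamma=4\zeta/(1-\zeta)<1$ and the FIR structure are immediate from $\zeta<1/5$. The one structural fact I rely on is the factorization $zI-\hat M=(I-\Delta\Res{M})(zI-M)$, giving $\Res{\hat M}=\Res{M}(I+\tf\Delta)^{-1}$ with $\tf\Delta:=-\Delta\Res{M}$ exactly as in Lemma~\ref{lemma:feasibility}; this is what lets the nominal resolvent appearing in the candidate be controlled by $\zeta$, which is defined through the \emph{true} resolvent $\Res{M}$.

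For the affine (achievability) constraint I would compute directly. Since $\widetilde{\tf\Phi}_u=\trueK\widetilde{\tf\Phi}_x$, the left-hand side collapses to $(zI-\hat M)\widetilde{\tf\Phi}_x=(zI-\hat M)\sum_{t=1}^L z^{-t}\hat M^{t-1}$, which telescopes to $I-z^{-L}\hat M^{L}$. This is exactly $I+\frac{1}{z^L}\widetilde V$ with $\widetilde V=-\hat M^{L}=-\Res{\hat M}(L+1)$, matching the prescribed slack.

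The substance is the robustness constraint. By the triangle inequality I would split the block $\hinf$ term into the untruncated contribution plus the truncation tail. Because $\tf\Phi_x^\infty=\Res{M}(I+\tf\Delta)^{-1}$ and $\tf\Phi_u^\infty=\trueK\Res{M}(I+\tf\Delta)^{-1}$, the untruncated part is bounded by $\sqrt2\,\zeta/(1-\zeta)$ via the identical computation in Lemma~\ref{lemma:feasibility}. For the tail, using $\tf\Phi_u^\infty-\widetilde{\tf\Phi}_u=\trueK(\tf\Phi_x^\infty-\widetilde{\tf\Phi}_x)$ I factor out the constant matrix to get the bound $\sqrt2(\epsilon_A+\epsilon_B\twonorm{\trueK})\hinfnorm{\tf\Phi_x^\infty-\widetilde{\tf\Phi}_x}$. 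Bounding the $\hinf$ norm of a transfer function by the $\ell_1$-sum of its spectral-element norms and invoking the decay $\twonorm{\hat M^t}\le C_\star\hat\rho^t$ from Lemma~\ref{lem:simple_perturbation} yields $\hinfnorm{\tf\Phi_x^\infty-\widetilde{\tf\Phi}_x}\le\sum_{t\ge L+1}C_\star\hat\rho^{t-1}=\frac{2C_\star\hat\rho^L}{1-\rho_\star}$; the hypothesis $\epsilon_A+\epsilon_B\twonorm{\trueK}\le\frac{1-\rho_\star}{2C_\star}$ then collapses the entire tail to at most $\sqrt2\,\hat\rho^L$. The slack term adds $\twonorm{\widetilde V}=\twonorm{\hat M^{L}}\le C_\star\hat\rho^L$.

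Finally I would feed in the choice of $L$. Using $\log\hat\rho\le-\tfrac{1-\rho_\star}{2}$ together with $L\ge\frac{4\log(C_\star/\zeta)}{1-\rho_\star}$ from \eqref{eq:L-feasible} gives $\hat\rho^L\le e^{-L(1-\rho_\star)/2}\le\zeta^2/C_\star^2\le\zeta^2$ (note $C_\star\ge1$), so the two tail contributions total at most $(1+\sqrt2)\zeta^2$. Hence the robustness LHS is at most $\frac{\sqrt2\,\zeta}{1-\zeta}+(1+\sqrt2)\zeta^2$, and the target $\tilde\gamma=\frac{4\zeta}{1-\zeta}$ follows from the one-line estimate $(1+\sqrt2)\zeta(1-\zeta)\le 4-\sqrt2$, which holds comfortably for $\zeta<1/5$. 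The main obstacle is the bookkeeping of the first paragraph: correctly identifying $\Res{\hat M}$ with $\Res{M}(I+\tf\Delta)^{-1}$ so the untruncated part is governed by $\zeta$, and then selecting the truncation length so that the tail is negligible relative to $\zeta$ rather than merely relative to $1-\rho_\star$ — it is the interplay of the smallness hypothesis on $\epsilon_A+\epsilon_B\twonorm{\trueK}$ and the logarithmic lower bound on $L$ that makes both happen simultaneously.
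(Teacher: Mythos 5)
Your proof is correct and follows essentially the same route as the paper's: both verify the affine constraint by telescoping, bound the untruncated response via the identity $\Res{\Ah+\Bh\trueK}=\Res{\trueA+\trueB\trueK}(I+\tf{\Delta})^{-1}$ with $\hinfnorm{\tf{\Delta}}\leq\zeta$, control the truncation tail and $\twonorm{\widetilde V}$ by the geometric decay from Lemma~\ref{lem:simple_perturbation} combined with the lower bound on $L$, and finish with the same arithmetic in $\zeta$. The only differences are bookkeeping: you apply the triangle inequality to the block transfer matrix before factoring out $(\epsilon_A+\epsilon_B\twonorm{\trueK})$, and your tail estimates come out a factor of $\zeta$ sharper than the paper's, neither of which changes the argument.
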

\begin{proof}

From Lemma~\ref{lem:simple_perturbation} and the assumption on $\zeta$ we have that $\norm{(\Ahat + \Bhat \trueK)^t}_2 \leq C_\star \left(\frac{1 + \rho_\star}{2} \right)^t$ for all $t \geq 0$. In particular, since $\Res{\Ahat + \Bhat \trueK} (L + 1) = (\Ahat + \Bhat \trueK)^{L} $, we have  $\norm{\widetilde V} = \norm{(\Ahat + \Bhat \trueK)^L } \leq C_\star  \left(\frac{1 + \rho_\star}{2} \right)^L \leq \zeta$. The last inequality is true because we assumed $L$ is sufficiently large. 

Once again, since $\Res{\Ahat + \Bhat \trueK} (L + 1) = (\Ahat + \Bhat \trueK)^{L} $, it can be easily seen that our choice of $\widetilde {\tf \Phi}_x$, $\widetilde {\tf \Phi}_u$, and $\widetilde V$ satisfy the linear constraint of problem \eqref{eq:robustFIRbnd}. It remains to prove that
\begin{align*}
\sqrt{2}\bighinfnorm{\begin{bmatrix}{\epsilon_A}{\tf\Phi_x}\\ {\epsilon_B}{\tf\Phi_u}\end{bmatrix}} + \twonorm{\widetilde V} \leq \tilde \gamma < 1.
\end{align*}

The second inequality holds because of our assumption on $\zeta$. We already know that $ \twonorm{\widetilde V} \leq \zeta$. Now, we bound:
\begin{align*}
\bighinfnorm{\begin{bmatrix}{\epsilon_A}{\widetilde{\tf\Phi}_x}\\ {\epsilon_B}{ \widetilde{\tf\Phi}_u}\end{bmatrix}} &\leq (\epsilon_A +\epsilon_B\twonorm{\trueK})\hinfnorm{\Res{\Ahat + \Bhat\trueK}(1:L)} \\
&\leq  (\epsilon_A +\epsilon_B \twonorm{\trueK})(\hinfnorm{\Res{\Ahat + \Bhat\trueK}} +  \hinfnorm{\Res{\Ahat + \Bhat\trueK} (L + 1 : \infty)}).
\end{align*}
These inequalities follow from the definition of $(\widetilde{\tf\Phi}_x, \widetilde{\tf \Phi}_u)$ and the triangle inequality.

Now, we recall that $\Res{\Ahat + \Bhat \trueK} = \Res{\trueA + \trueB \trueK} (I + \tf \Delta)^{-1}$, where $\tf \Delta = - (\Delta_A + \Delta_B \trueK) \Res{\trueA + \trueB \trueK}$. Then, since $\hinfnorm{\tf \Delta} \leq \zeta$ (due to  Proposition \ref{prop:bound}), we have $\hinfnorm{\Res{\Ahat + \Bhat \trueK}} \leq \frac{1}{1 - \zeta} \hinfnorm{\Res{\trueA + \trueB \trueK}}$. 

We can upper bound 

\begin{align*}
\hinfnorm{\Res{\Ahat + \Bhat\trueK}(L + 1 : \infty)} \leq \sum_{t = L + 1}^\infty \twonorm{\Res{\Ahat + \Bhat \trueK}({t})} \leq C_\star \left(\frac{1 + \rho_\star}{2}\right)^L \sum_{t = 0}^\infty \left( \frac{1 + \rho_\star}{2}\right)^t = \frac{2 C_\star}{1 - \rho_\star} \left(\frac{1 + \rho_\star}{2}\right)^L. 
\end{align*}

Then, since we assumed that $\epsilon_A$ and $\epsilon_B$ are sufficiently small  and that $L$ is sufficiently large, we obatin 
\begin{align*}
(\epsilon_A +\epsilon_B \twonorm{\trueK}) \hinfnorm{\Res{\Ahat + \Bhat\trueK}(L + 1 : \infty)} \leq \zeta. 
\end{align*}

Therefore, 
\begin{align*}
\bighinfnorm{\begin{bmatrix}{\epsilon_A}{\widetilde{\tf\Phi}_x}\\ {\epsilon_B}{ \widetilde{\tf\Phi}_u}\end{bmatrix}} &\leq \frac{\zeta}{1 - \zeta} + \zeta \leq \frac{2\zeta}{1 - \zeta}. 
\end{align*}

The conclusion follows. 
\end{proof}

\begin{proof}[Proof of Theorem \ref{thm:FIR_subopt}]

As all of the assumptions of Lemma \ref{lem:FIR_feasibility} are satisfied, optimization problem \eqref{eq:robustFIRbnd} is feasible. We denote $(\tf \Phi_x^\star, \tf \Phi_u^\star, V_\star, \gamma_\star)$ the optimal solution of problem \eqref{eq:robustFIRbnd}. We denote
\[
\Dh :=  \Delta_A \tf \Phi_{x}^\star + \Delta_B \tf \Phi_{u}^\star + \frac{1}{z^L}V_\star.
\]
Then, we have 
\begin{align*}
\begin{bmatrix}
zI - \trueA & - \trueB
\end{bmatrix}
\begin{bmatrix}
\tf \Phi_{x}^\star \\ \tf \Phi_{u}^\star
\end{bmatrix} = I + \Dh. 
\end{align*}

Applying the triangle inequality, and leveraging Proposition \ref{prop:bound}, we can verify that
\[
\hinfnorm{\Dh} \leq \sqrt2\bighinfnorm{\begin{bmatrix}\epsilon_A \tf\Phi_{x}^\star \\ \epsilon_B \tf \Phi_{u}^\star \end{bmatrix}} + \twonorm{V_\star} \leq \gamma_\star < 1,
\]
where the last two inequalities are true because the optimal solution is a feasible point of the optimization problem \eqref{eq:robustFIRbnd}.  

We now apply Lemma \ref{lemma:robust-sls} to characterize the response achieved by the FIR approximate controller $\tf K_L$ on the true system $(\trueA,\trueB)$:
\begin{align*}
J(\trueA,\trueB,\tf K_L) &= \bightwonorm{\begin{bmatrix} Q^\frac{1}{2} & 0 \\ 0 & R^\frac{1}{2} \end{bmatrix} \begin{bmatrix}\tf \Phi_{x}^\star\\ \tf \Phi_{u}^\star \end{bmatrix}(I+\Dh)^{-1} }\\
& \leq \frac{1}{1-\gamma_\star}\bightwonorm{\begin{bmatrix} Q^\frac{1}{2} & 0 \\ 0 & R^\frac{1}{2} \end{bmatrix} \begin{bmatrix}{\tf \Phi_{x}^\star}\\{\tf \Phi_{u}^\star}\end{bmatrix}}.
\end{align*}

Denote by $(\widetilde{\tf\Phi}_x, \widetilde{\tf\Phi}_u, \widetilde{V}, \tilde{\gamma})$ the feasible solution constructed in Lemma \ref{lem:FIR_feasibility}, and let $J_L( \Ahat , \Bhat, \trueK)$ denote the truncation of the LQR cost achieved by controller $\trueK$ on system $(\Ahat, \Bhat)$ to its first $L$ time-steps.

Then,
\begin{align*}
 \frac{1}{1-\gamma_\star}\bightwonorm{\begin{bmatrix} Q^\frac{1}{2} & 0 \\ 0 & R^\frac{1}{2} \end{bmatrix} \begin{bmatrix}{\tf \Phi_{x}^\star}\\{\tf \Phi_{u}^\star}\end{bmatrix}}
  &\leq \frac{1}{1- \tilde{\gamma}}\bightwonorm{\begin{bmatrix} Q^\frac{1}{2} & 0 \\ 0& R^\frac{1}{2} \end{bmatrix} \begin{bmatrix}{\widetilde{\tf \Phi}_x}\\{ \widetilde{\tf \Phi}_u}\end{bmatrix}}\\
 &= \frac{1}{1- \tilde \gamma} {J_L(\Ah,\Bh,\trueK)} \\
 & \leq \frac{1}{1- \tilde \gamma} {J(\Ah,\Bh,\trueK)} \\
 & \leq \frac{1}{1- \tilde \gamma} \frac{1}{1-\hinfnorm{\tf \Delta}}{J_\star},
\end{align*}
where $\tf \Delta = - (\Delta_A + \Delta_B \trueK) \Res{\trueA + \trueB \trueK}$. The first inequality follows from the optimality of  $(\tf \Phi_{x}^\star,\tf\Phi_{u}^\star ,V_\star, \gamma_\star)$, the equality and second inequality from the fact that $(\widetilde{\tf\Phi}_x, \widetilde{\tf\Phi}_u)$ are truncations of the response of $\trueK$ on $(\Ah,\Bh)$ to the first $L$ time steps, and the final inequality by following similar arguments to the proof of Theorem \ref{thm:lqr_cost}, and in applying Theorem \ref{thm:robust}.

Noting that
\[
\hinfnorm{\tf \Delta} = \bighinfnorm{({\Delta_A}+{\Delta_B}\trueK)\Res{\trueA + \trueB \trueK}} \leq \zeta < 1,
\]
we then have that
\[
{J(\trueA,\trueB,\tf K_L)} \leq \frac{1}{1- \tilde \gamma} \frac{1}{1-\zeta}{J_\star},
\]
Recalling that $\tilde \gamma = \frac{4 \zeta}{1 - \zeta}$, we obtain 
\begin{align*}
\frac{J(\trueA,\trueB,\tf K_L) - J_\star}{J_\star} \leq \frac{1 - \zeta}{1- 5\zeta} \frac{1}{1-\zeta} - 1 = \frac{5 \zeta}{(1 - 5\zeta)} \leq 10 \zeta,
\end{align*}
where the last equality is true when $\zeta \leq 1/10$. The conclusion follows.

\end{proof}


\section{A Common Lyapunov Relaxation for Proportional Control}
\label{appendix:common_lyap}

We unpack each of the norms in~\eqref{eq:robustLQRbnd-static2} as linear matrix inequalities.  First, by the KYP Lemma, the $\hinf$ constraint is satisfied if and only if there exists a matrix $P_\infty$ satisfying
\begin{align*}
\begin{bmatrix}
(\Ah+\Bh K)^* P_\infty(\Ah+\Bh K)-P_\infty & (\Ah+\Bh K)^*P_\infty\\
P_\infty (\Ah+\Bh K) & P_\infty
\end{bmatrix}+
\begin{bmatrix}
	\gamma^{-2} \begin{bmatrix} \tfrac{\epsilon_A}{\sqrt{\alpha}}  \\ \tfrac{\epsilon_B}{\sqrt{1-\alpha}} K\end{bmatrix}^* \begin{bmatrix} \tfrac{\epsilon_A}{\sqrt{\alpha}}  \\ \tfrac{\epsilon_B}{\sqrt{1-\alpha}} K\end{bmatrix} & 0\\
	0 & - I
\end{bmatrix} \preceq 0 \:.
\end{align*}
Applying the Schur complement Lemma, we can reformulate this as the equivalent matrix inequality
\begin{align*}
\begin{bmatrix}
-P_\infty^{-1} & 0 & 0 & (\Ah+\Bh K) & I\\
0 & -\gamma^2 I & 0  & \tfrac{\epsilon_A}{\sqrt{\alpha}} I & 0\\
0 & 0 & -\gamma^2 I & \tfrac{\epsilon_B}{\sqrt{1-\alpha}} K & 0\\
(\Ah + \Bh K)^* &\tfrac{\epsilon_A}{\sqrt{\alpha}} I & \tfrac{\epsilon_B}{\sqrt{1-\alpha}} K^*  & -P_\infty & 0\\
I & 0 & 0 & 0 & -I
\end{bmatrix} \preceq 0 \:.
\end{align*}
Then, conjugating by the matrix $\operatorname{diag}(I,I,P_\infty^{-1},I)$ and setting $X_\infty = P_\infty^{-1}$, we are left with
\begin{align*}
\begin{bmatrix}
-X_\infty & 0 & 0 & (\Ah+\Bh K)X_\infty & I\\
0 & -\gamma^2 I & 0  & \tfrac{\epsilon_A}{\sqrt{\alpha}} X_\infty & 0\\
0 & 0 & -\gamma^2 I & \tfrac{\epsilon_B}{\sqrt{1-\alpha}} KX_\infty & 0\\
X_\infty(\Ah + \Bh K)^* &\tfrac{\epsilon_A}{\sqrt{\alpha}}  X_\infty & \tfrac{\epsilon_B}{\sqrt{1-\alpha}} X_\infty K^*  & -X_\infty & 0\\
I & 0 & 0 & 0 & -I
\end{bmatrix} \preceq 0 \:.
\end{align*}
Finally, applying the Schur complement lemma again gives the more compact inequality
\begin{align*}
\begin{bmatrix}
-X_\infty+I & 0 & 0 & (\Ah+\Bh K)X_\infty\\
0 & -\gamma^2 I & 0  & \tfrac{\epsilon_A}{\sqrt{\alpha}} X_\infty \\
0 & 0 & -\gamma^2 I & \tfrac{\epsilon_B}{\sqrt{1-\alpha}} KX_\infty \\
X_\infty(\Ah + \Bh K)^* &\tfrac{\epsilon_A}{\sqrt{\alpha}}  X_\infty & \tfrac{\epsilon_B}{\sqrt{1-\alpha}} X_\infty K^*  & -X_\infty \\
\end{bmatrix} \preceq 0 \:.
\end{align*}
For convenience, we permute the rows of this inequality and conjugate by $\operatorname{diag}(I,I,\sqrt{\alpha}I,\sqrt{1-\alpha} I )$ and use the equivalent form
 \begin{align*}
 \begin{bmatrix} -X_\infty+I &  (\Ah+\Bh K)X_\infty  & 0 & 0 \\
    X_\infty(\Ah+\Bh K)^* & -X_\infty & \epsilon_A X_\infty & \epsilon_B X_\infty K^* \\
    0 & \epsilon_A X_\infty  & -\alpha\gamma^2 I & 0\\
    0 & \epsilon_B KX_\infty  & 0 & -(1-\alpha)\gamma^2 I \end{bmatrix} \preceq 0 \:.
\end{align*}
For the $\htwo$ norm, we have that under proportional control $K$, the average cost is given by
$ \operatorname{Trace}((Q +K^*R K)X_2)$ where $X_2$ is the steady state covariance. That is, $X_2$ satisfies the Lyapunov equation
\begin{align*}
	X_2 = (\Ah+\Bh K) X_2(\Ah+\Bh K)^* +I \,.
\end{align*}
But note that we can relax this expression to a matrix inequality
\begin{align}\label{eq:htwo-lyap}
	X_2 \succeq (\Ah+\Bh K) X_2(\Ah+\Bh K)^* +I \:,
\end{align}
and $ \operatorname{Trace}((Q +K^*R K)X_2)$  will remain an upper bound on the squared $\htwo$ norm.  Rewriting this matrix inequality with Schur complements and combining with our derivation for the $\hinf$ norm, we can reformulate~\eqref{eq:robustLQRbnd-static2} as a nonconvex semidefinite program
\begin{align}\label{eq:robustLQRbnd-static-nc}
\begin{array}{ll}
\operatorname{minimize}\limits_{X_2,X_\infty,K,\gamma} &\frac{1}{(1-\gamma)^2}  \operatorname{Trace}((Q +K^*R K)X_2) \\
\mbox{subject to} & \begin{bmatrix} X_2 -I & (\Ah+\Bh K)X_2 \\ X_2(\Ah+\Bh K)^*  & X_2\end{bmatrix}\succeq 0\\
&\begin{bmatrix} X_\infty-I &  (\Ah+\Bh K)X_\infty  & 0 & 0 \\
    X_\infty(\Ah+\Bh K)^* & X_\infty & \epsilon_A X_\infty & \epsilon_B X_\infty K^* \\
    0 & \epsilon_A X_\infty  & \alpha\gamma^2 I & 0\\
    0 & \epsilon_B KX_\infty  & 0 & (1-\alpha)\gamma^2 I \end{bmatrix} \succeq 0 \:.
\end{array}
\end{align}
The common Lyapunov relaxation simply imposes that $X_2 = X_\infty$.  Under this identification, we note that the first LMI becomes redundant and we are left with the SDP
\begin{align*}
\begin{array}{ll}
\operatorname{minimize}\limits_{X,K,\gamma} &\frac{1}{(1-\gamma)^2}  \operatorname{Trace}((Q +K^*R K)X) \\
\mbox{subject to}
&\begin{bmatrix} X-I &  (\Ah+\Bh K)X  & 0 & 0 \\
    X(\Ah+\Bh K)^* & X  & \epsilon_A X & \epsilon_B X K^* \\
    0 & \epsilon_A X & \alpha\gamma^2 I & 0\\
    0 & \epsilon_B KX & 0 & (1-\alpha)\gamma^2 I \end{bmatrix} \succeq 0 \:.
\end{array}
\end{align*}
Now though this appears to be nonconvex, we can perform the standard variable substitution $Z=KX$ and rewrite the cost to yield~\eqref{eq:robustLQRbnd-common-lyap}.

\section{Numerical Bootstrap Validation}
\label{sec:bootstrap-experiments}

We evaluate the efficacy of the bootstrap
procedure introduced in Algorithm~\ref{alg:bootstrap}.  Recall that even
though we provide theoretical bounds in
Proposition~\ref{prop:independent_estimation}, for practical purposes and for
handling dependent data, we want bounds that are the least conservative
possible.

For given state dimension $\statedim$, input dimension $\inputdim$, and scalar $\rho$, we generate upper triangular
matrices $\trueA \in \RR^{\statedim \times \statedim}$ with all diagonal
entries equal to $\rho$ and the upper triangular entries i.i.d. samples from
$\Ncal(0,1)$, clipped at magnitude $1$. By construction, matrices will have spectral radius $\rho$.
The entries of $\trueB\in \RR^{\statedim \times \inputdim}$ were sampled
i.i.d. from $\Ncal(0,1)$, clipped  at magnitude $1$. The variance
 terms  $\sigma_u^2$ and $\sigma_w^2$ were fixed to be $1$.

Recall from Section~\ref{sec:bootstrap} that $M$ represents the number of
trials used for the bootstrap estimation, and $\widehat \epsilon_A$, $\widehat \epsilon_B$ are the bootstrap estimates for $\epsilon_A$, $\epsilon_B$. To check the validity of the
bootstrap procedure we empirically estimate the fraction of time $\trueA$
 and $\trueB$ lie in the balls $B_{\widehat{A}}(\widehat{\epsilon}_A)$ and $B_{\widehat{B}}(\widehat{\epsilon}_B)$, where $B_{X}(r) = \{X'\colon \ltwonorm{X' - X} \leq r\}$.

Our findings are
summarized in
Figures~\ref{fig:bootstrap_AB1}
and~\ref{fig:bootstrap_AB2}.
Although not plotted, the theoretical bounds found
in Section~\ref{sec:estimation} would be orders of magnitude larger
than the true $\epsilon_A$ and $\epsilon_B$, while the bootstrap bounds offer a
good approximation.

\def \sbasefigwidth{0.38}
\def \abasefigwidth{0.49}

\begin{figure}[h]
\centering
\begin{subfigure}[b]{\sbasefigwidth\textwidth}
  \caption{\footnotesize  Estimation Error in $A$}
\centerline{\includegraphics[width=\columnwidth]{./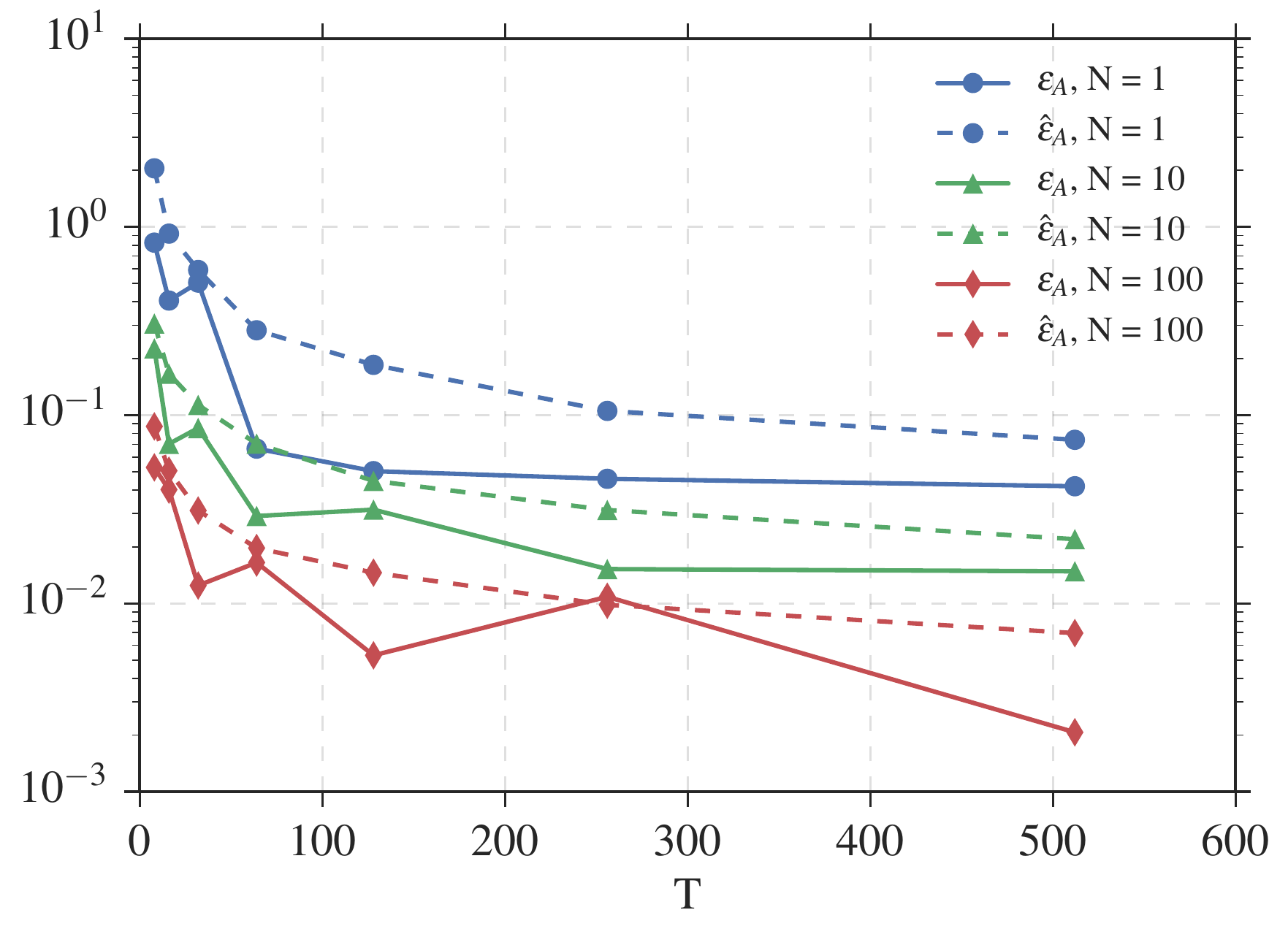}}
\end{subfigure}
\begin{subfigure}[b]{\sbasefigwidth\textwidth}
  \caption{\footnotesize  Correctness of Bootstrap Estimate}
\centerline{\includegraphics[width=\columnwidth]{./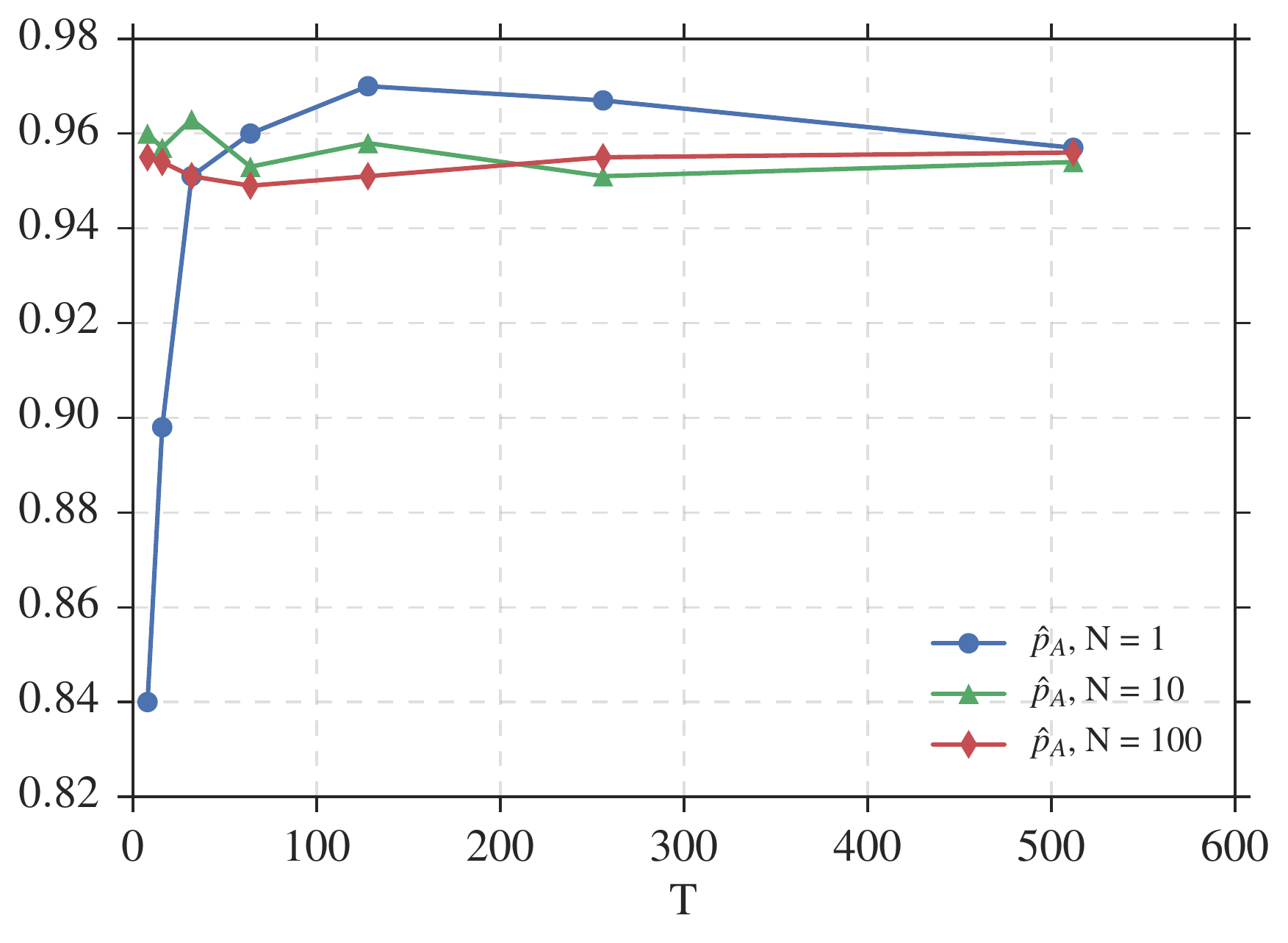}}
\end{subfigure}
\begin{subfigure}[b]{\sbasefigwidth\textwidth}
\caption{\footnotesize  Estimation Error in $B$}
\centerline{\includegraphics[width=\columnwidth]{./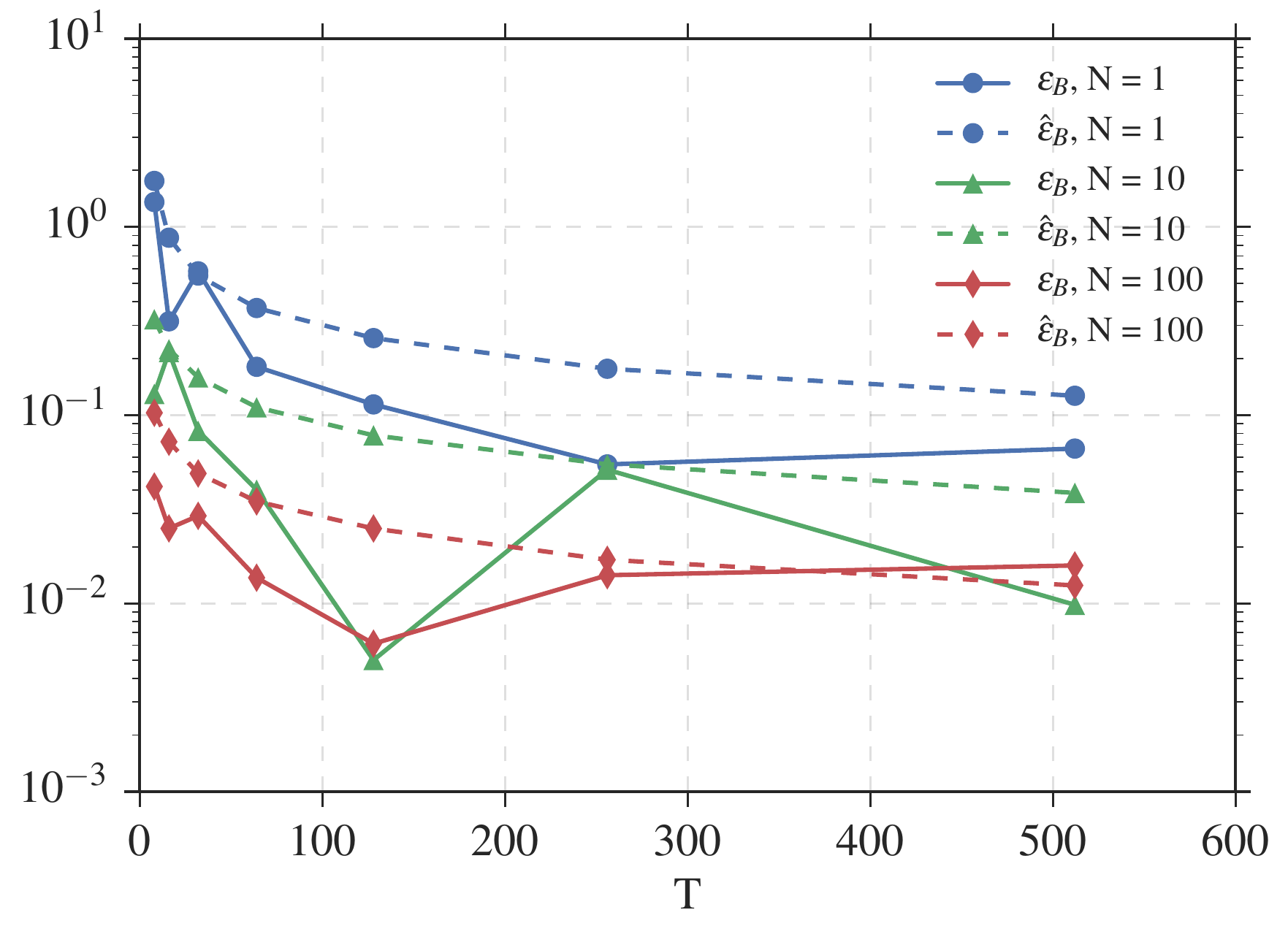}}
\end{subfigure}
\begin{subfigure}[b]{\sbasefigwidth\textwidth}
\caption{\footnotesize  Correctness of Bootstrap Estimate}
\centerline{\includegraphics[width=\columnwidth]{{./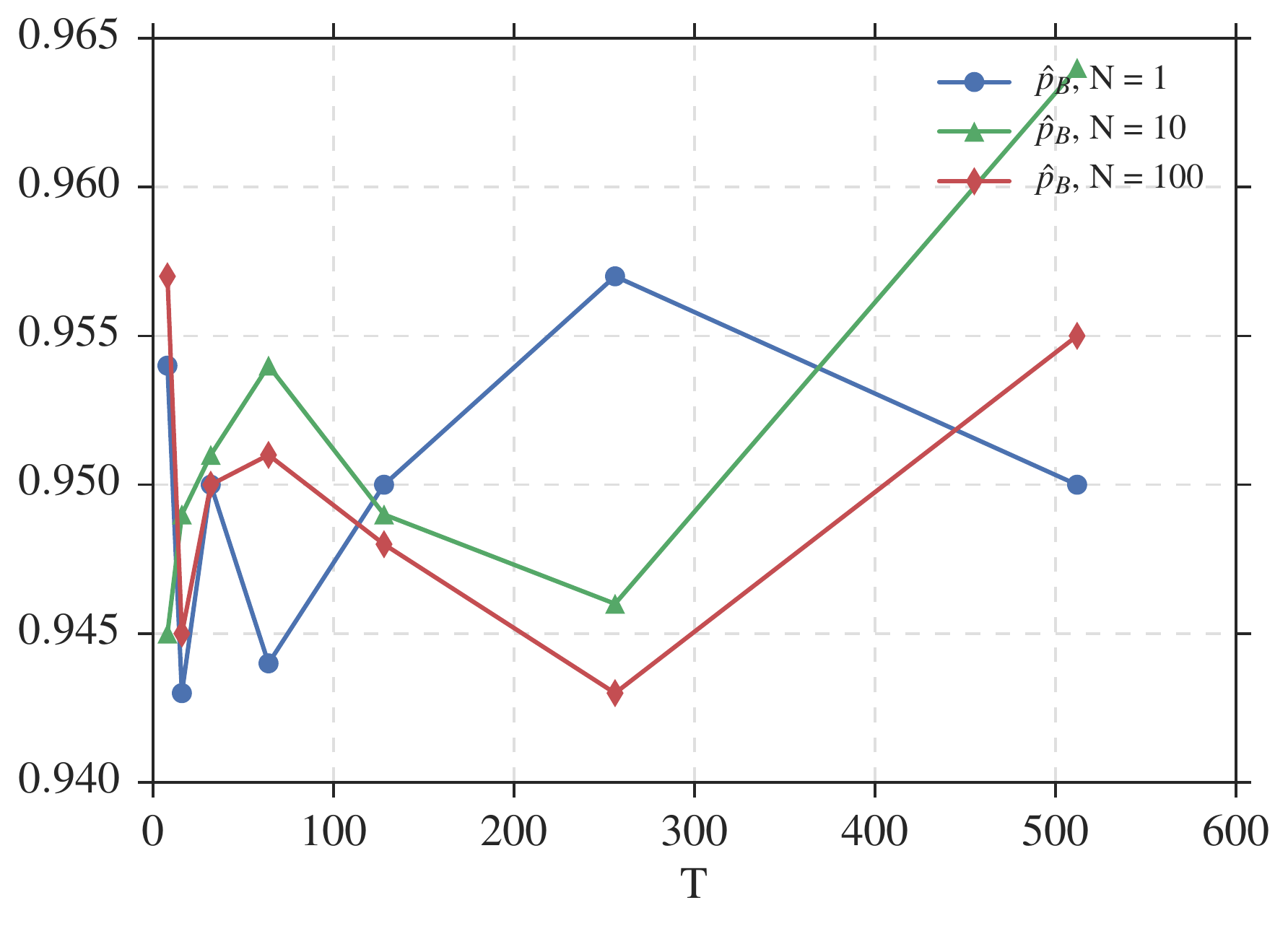}}}
\end{subfigure}
\caption{\small  In these simulations: $\statedim = 3$, $\inputdim = 1$, $\rho = 0.9$, and $M = 2000$. In (a), the spectral distances to $\trueA$ (shown in the solid lines) are compared with
  the bootstrap estimates (shown in the dashed lines). In (b), the probability $\trueA$ lies in $B_{\widehat{A}}(\widehat{\epsilon}_A)$ estimated from $2000$ trials.
 In (c), the spectral distances to $B_*$ are compared with the bootstrap estimates. In (d), the probability $\trueB$ lies in $B_{\widehat{B}}(\widehat{\epsilon}_B)$ estimated from $2000$ trials.}
\label{fig:bootstrap_AB1}
\end{figure}

\begin{figure}[h]
\centering
\begin{subfigure}[b]{\sbasefigwidth\textwidth}
\caption{\footnotesize  Estimation Error in $A$}
\centerline{\includegraphics[width=\columnwidth]{{./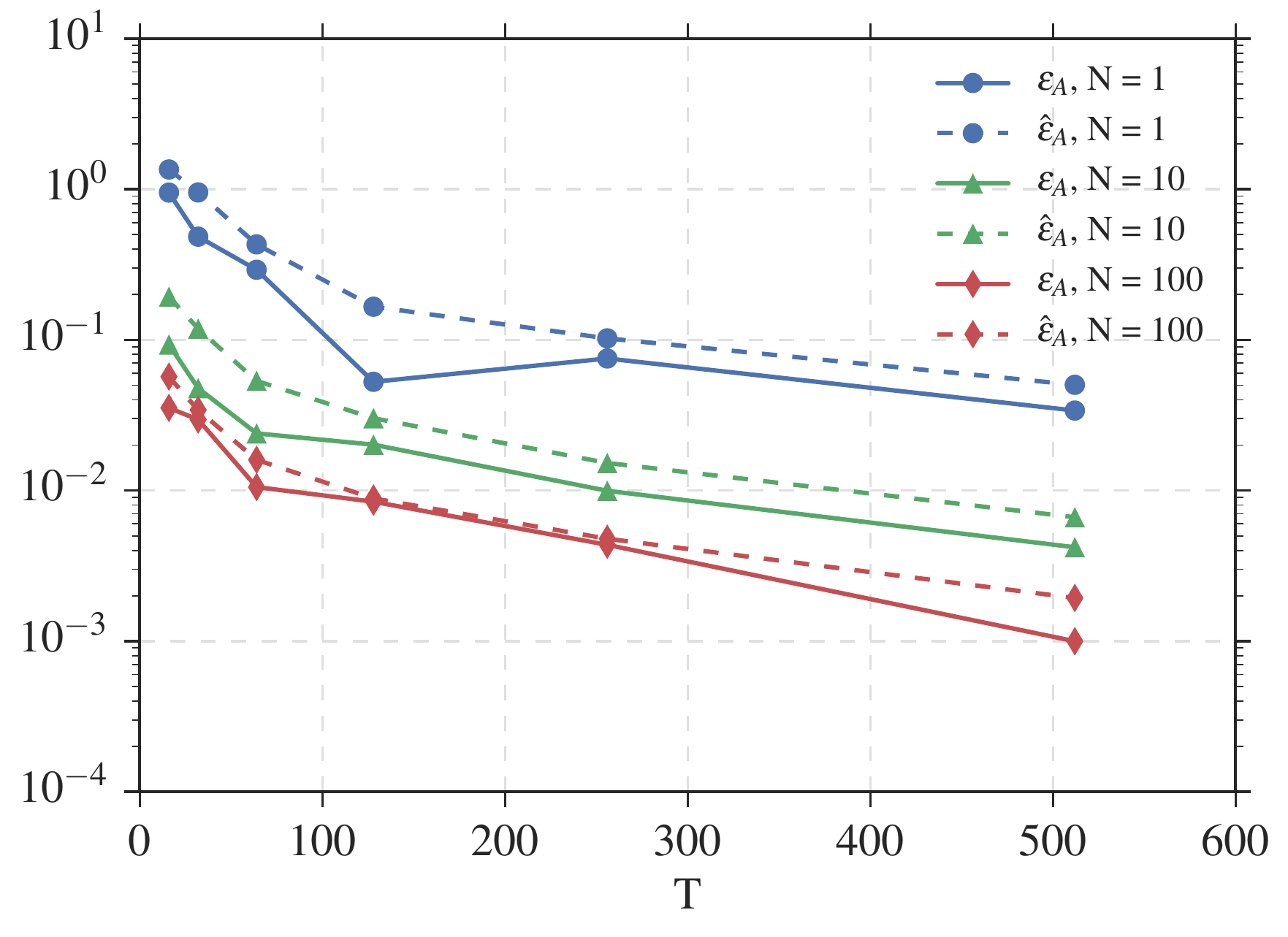}}}
\end{subfigure}
\begin{subfigure}[b]{\sbasefigwidth\textwidth}
\caption{\footnotesize Correctness of Bootstrap Estimate}
\centerline{\includegraphics[width=\columnwidth]{{./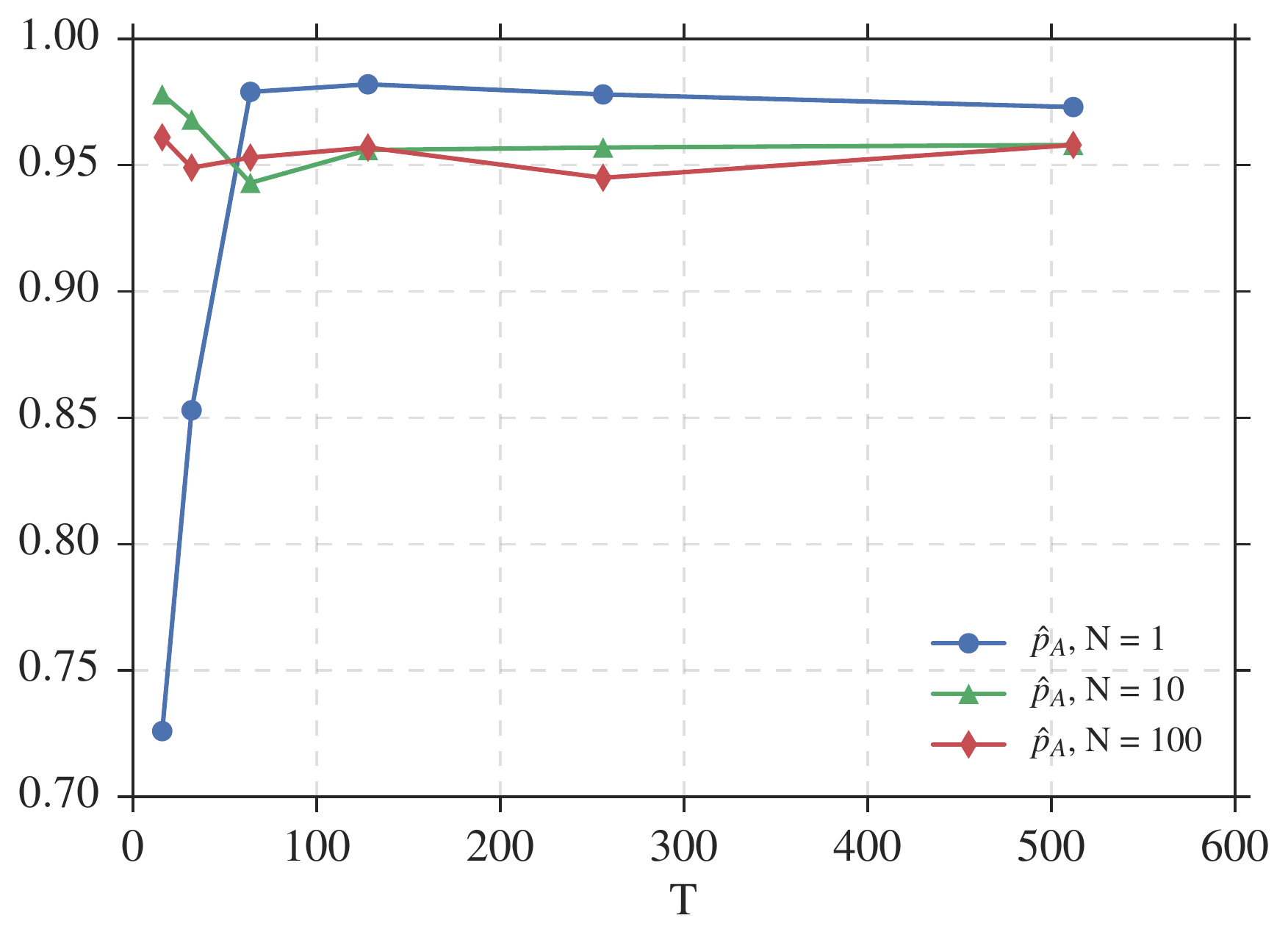}}}
\end{subfigure}
\begin{subfigure}[b]{\sbasefigwidth\textwidth}
\caption{\footnotesize  Estimation Error in $B$}
\centerline{\includegraphics[width=\columnwidth]{{./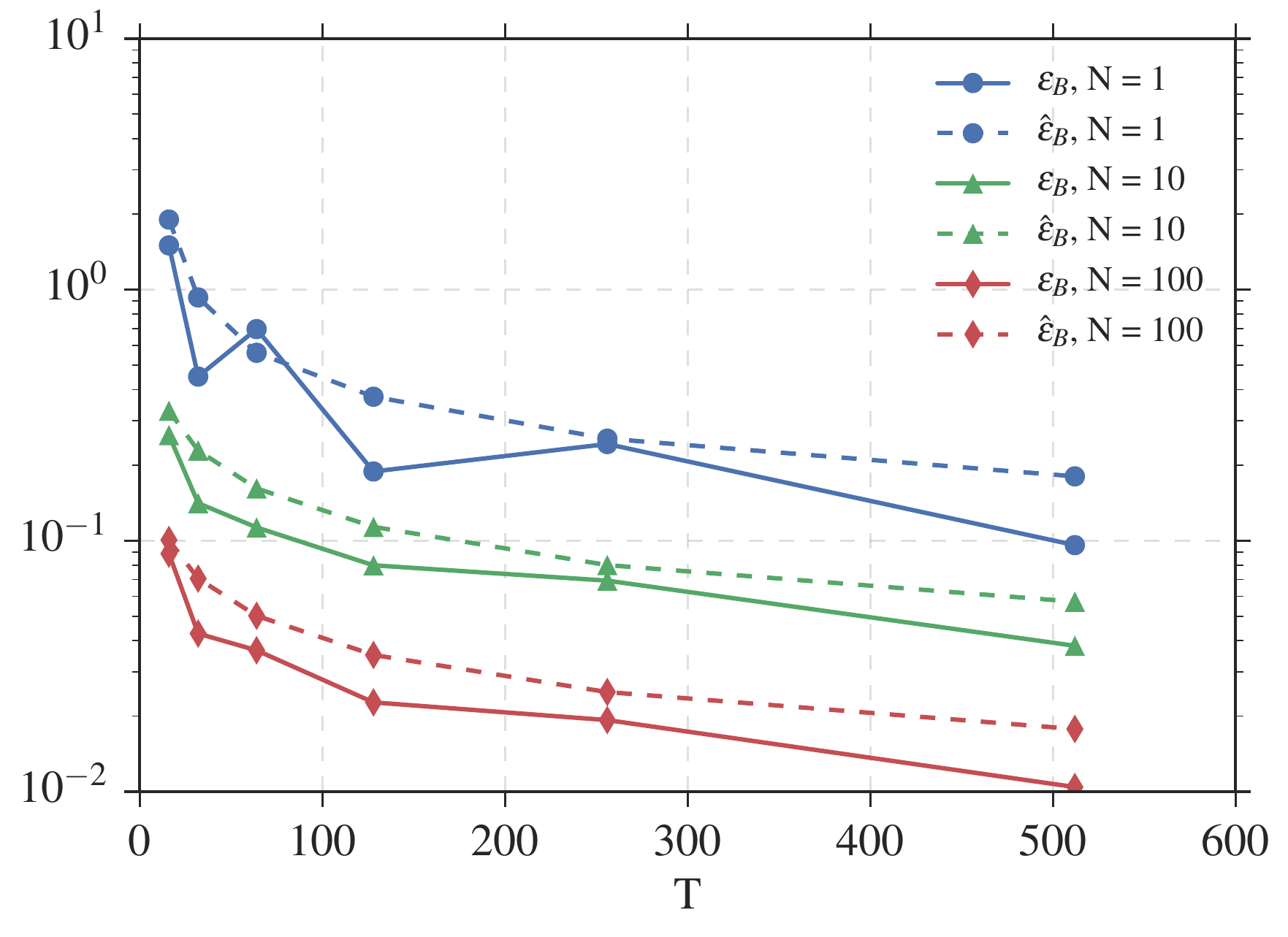}}}
\end{subfigure}
\begin{subfigure}[b]{\sbasefigwidth\textwidth}
\caption{\footnotesize  Correctness of Bootstrap Estimate}
\centerline{\includegraphics[width=\columnwidth]{{./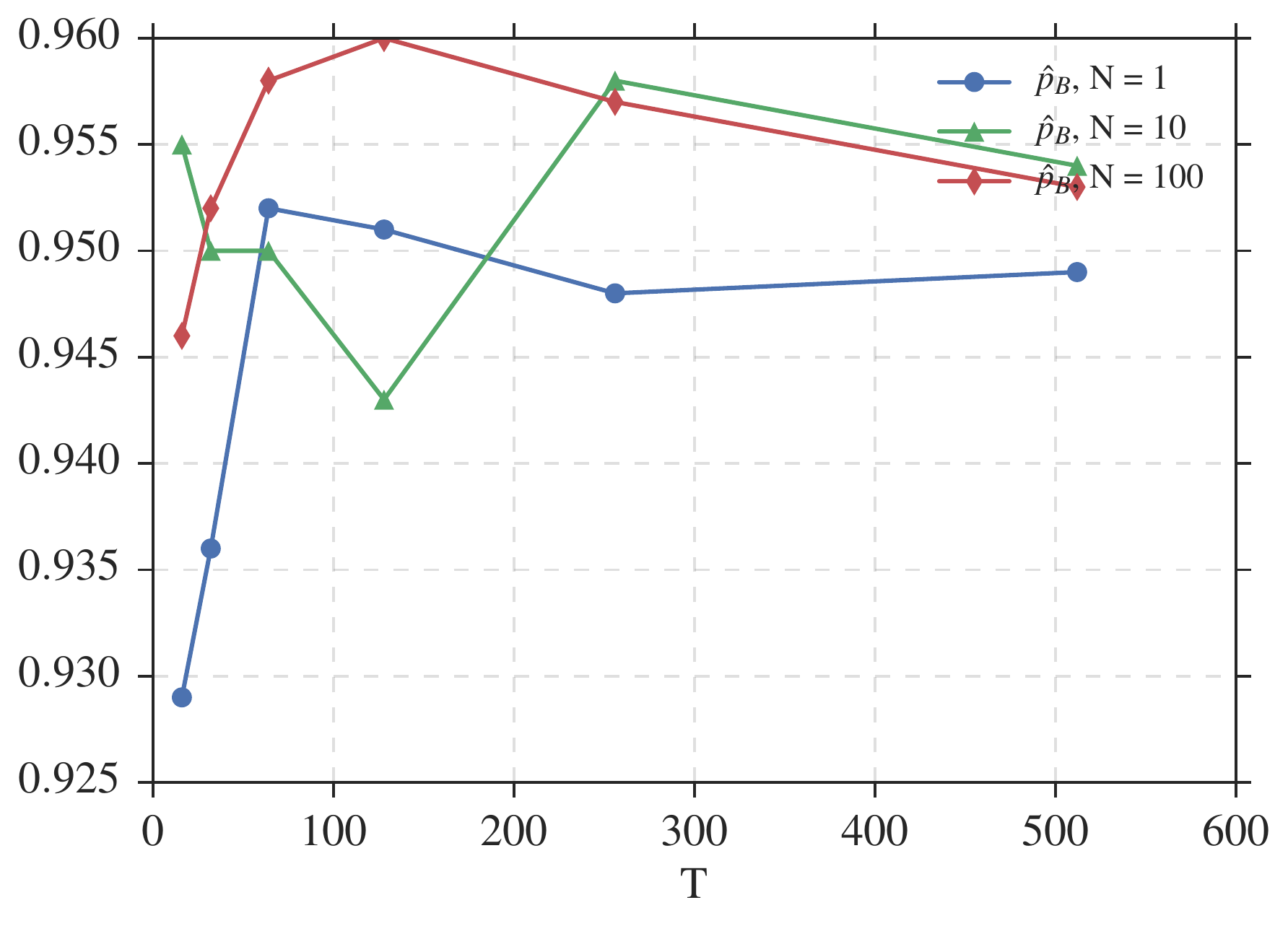}}}
\end{subfigure}
\caption{\small In these simulations: $\statedim = 6$, $\inputdim = 2$, $\rho = 1.01$, and $M = 2000$. In (a), the spectral distances to $\trueA$ are compared with the bootstrap estimates. In (b), the probability $\trueA$ lies in $B_{\widehat{A}}(\widehat{\epsilon}_A)$ estimated from $2000$ trials. In (c), the spectral distances to $\trueB$ are compared with the bootstrap estimates. In (d), the probability $\trueB$ lies in $B_{\widehat{B}}(\widehat{\epsilon}_B)$ estimated from $2000$ trials.}
\label{fig:bootstrap_AB2}
\end{figure}

\section{ Experiments with Varying Rollout Lengths}\label{app:eps_v_trial_figs}

Here we include results of experiments in which we fix the number of trials ($N=6$) and vary the rollout length.
Figure \ref{fig:eps_v_trial} displays the estimation errors. The estimation errors on $\trueA$ decrease more quickly than in the fixed rollout length case, consistent with the idea that longer rollouts of easily excitable systems allow for better identification due to higher signal to noise ratio. Figure \ref{fig:perf_v_trial} shows that stabilizing performance of the nominal is somewhat better than in the fixed rollout length case (Figure \ref{fig:perf_v_rollout}). This fact is likely related to the smaller errors on the estimation of $\trueA$ (Figure \ref{fig:eps_v_trial}).

\begin{figure}[h]
\centering
\begin{subfigure}[b]{\abasefigwidth\textwidth}
\caption{\footnotesize  Least Squares Estimation Errors}
\centerline{\includegraphics[width=\columnwidth]{./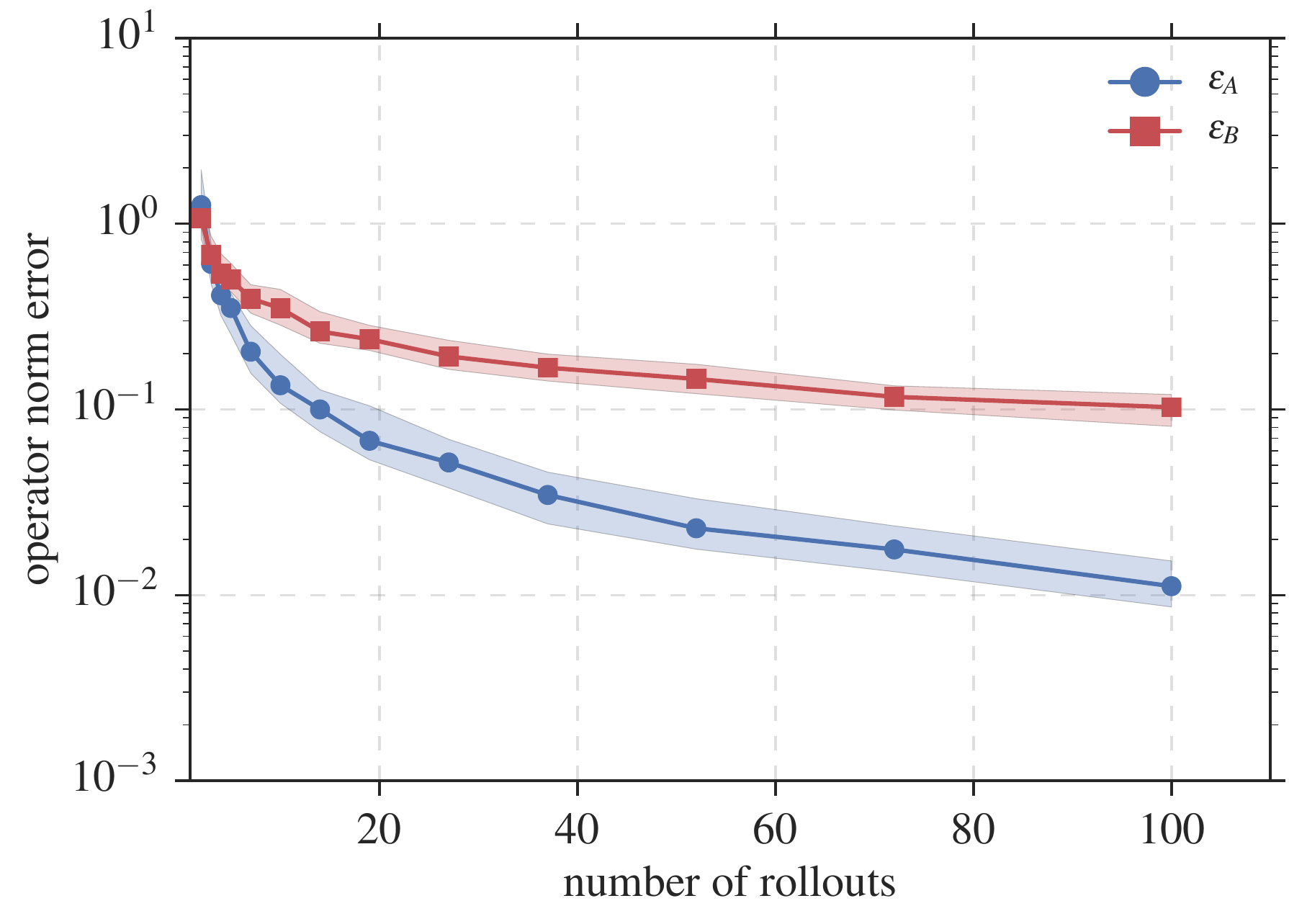}}
\end{subfigure}
\begin{subfigure}[b]{\abasefigwidth\textwidth}
\caption{\footnotesize  Accuracy of Bootstrap Estimates}
\centerline{\includegraphics[width=\columnwidth]{./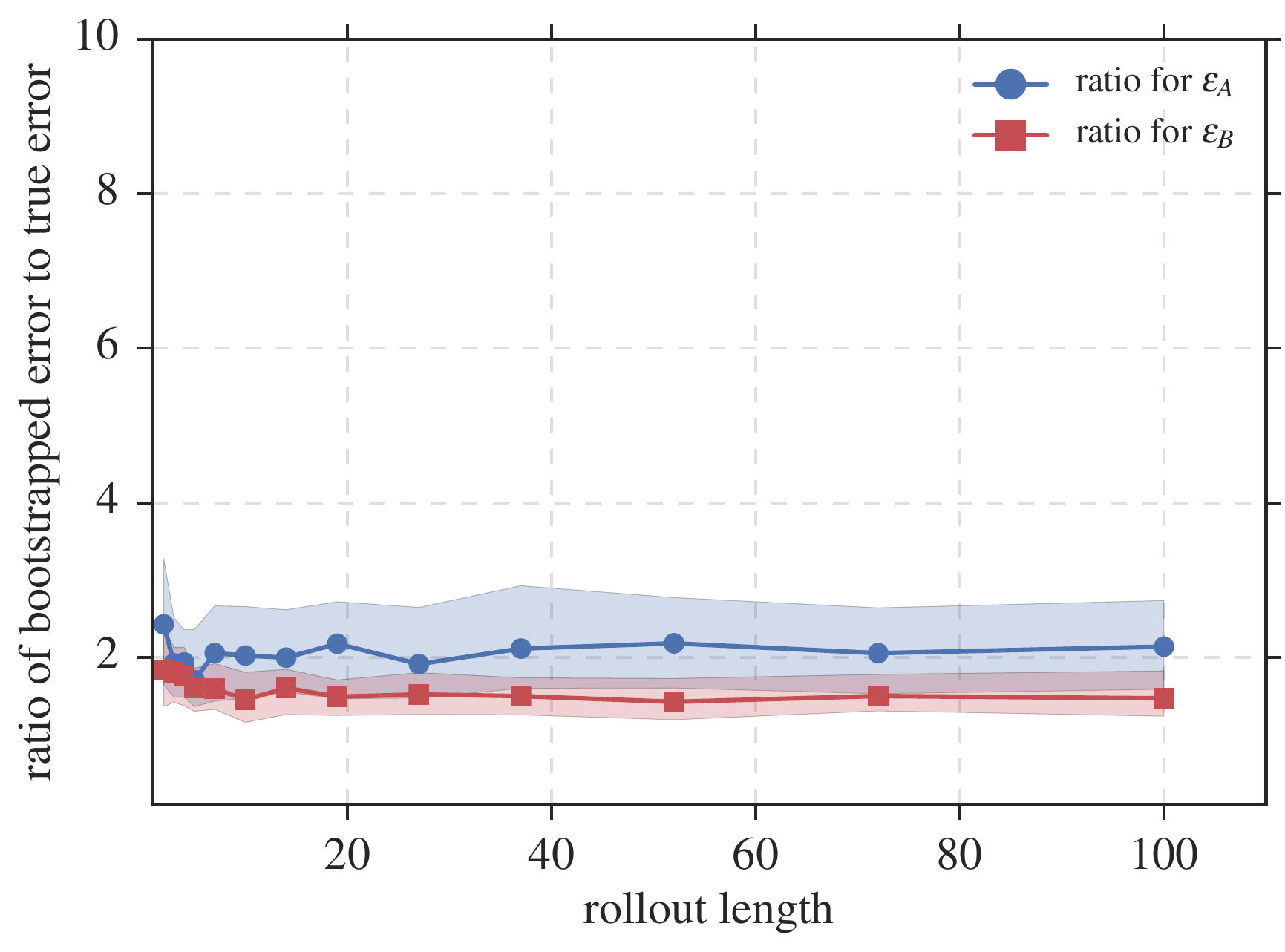}}
\end{subfigure}
\caption{\small The resulting errors from 100 identification experiments with with a total of $N=6$ rollouts is plotted against the length rollouts. In (a), the median of the least squares estimation errors decreases with $T$. In (b), the ratio of the bootstrap estimates to the true estimates. Shaded regions display quartiles.}
\label{fig:eps_v_trial}
\end{figure}

\begin{figure}[h]
\centering
\begin{subfigure}[b]{\abasefigwidth\textwidth}
\caption{\footnotesize  LQR Cost Suboptimality}
\centerline{\includegraphics[width=\columnwidth]{./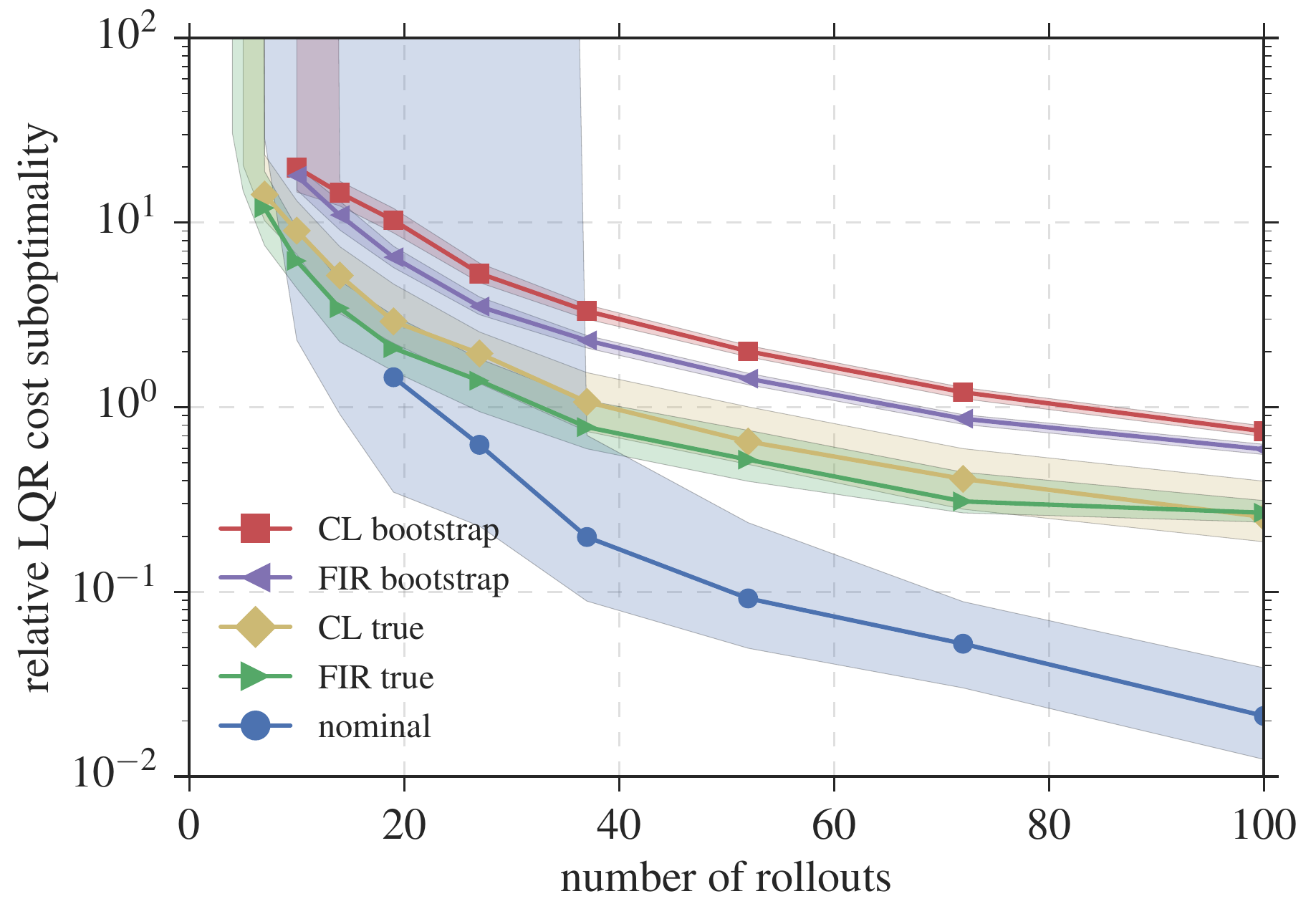}}
\end{subfigure}
\begin{subfigure}[b]{\abasefigwidth\textwidth}
\caption{\footnotesize  Frequency of Stabilization}
\centerline{\includegraphics[width=\columnwidth]{./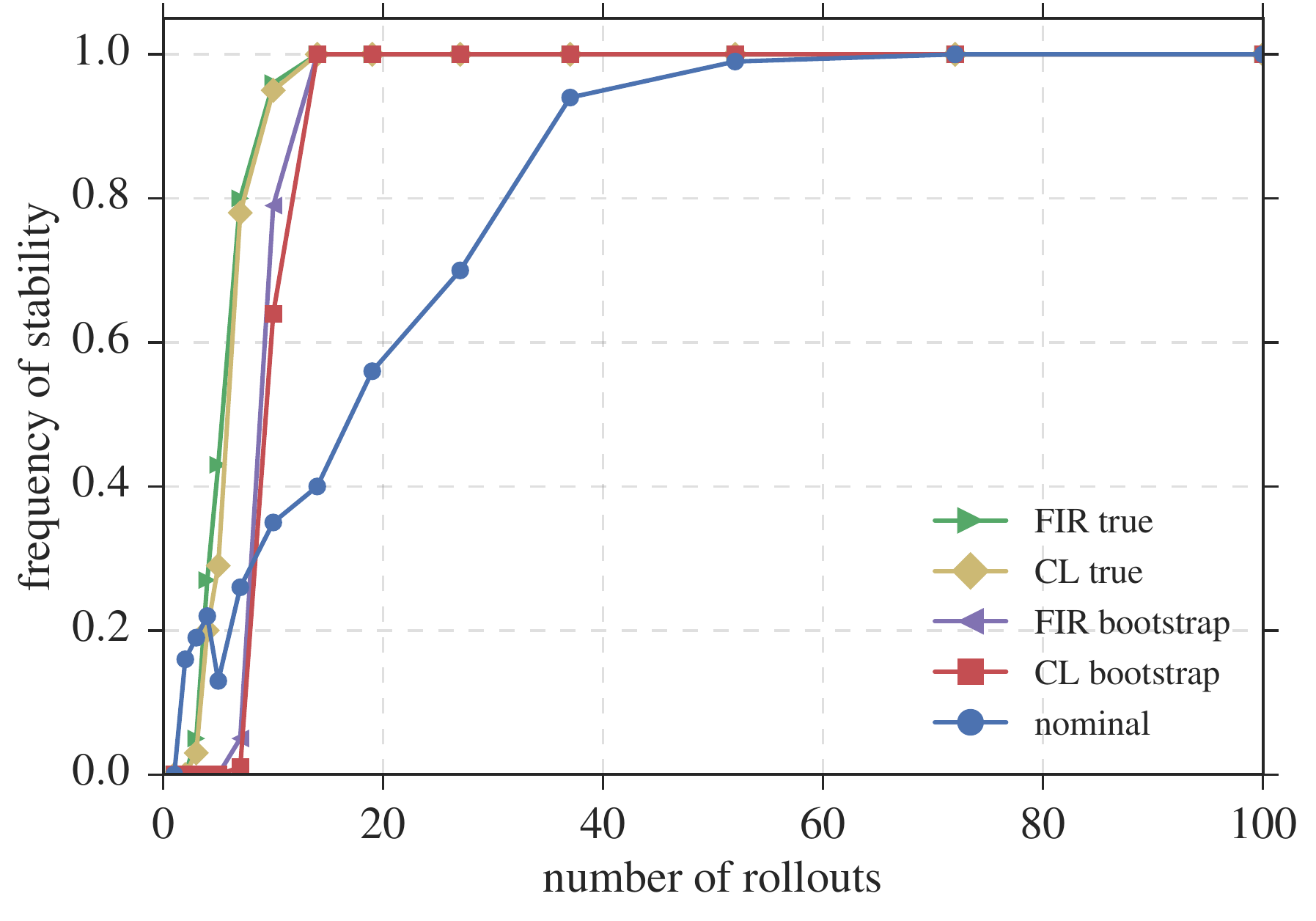}}
\end{subfigure}
\caption{\small  The performance of controllers synthesized on the results of the 100 identification experiments is plotted against the length of rollouts. In (a), the median suboptimality of nominal and robustly synthesized controllers are compared, with shaded regions displaying quartiles, which go off to infinity when stabilizing controllers are not frequently found. In (b), the frequency synthesis methods found stabilizing controllers.}
\label{fig:perf_v_trial}
\end{figure}

\end{document}